\newcommand\pd[2]{\frac{\partial {#1}}{\partial {#2}}}
\newcommand\pa{\partial}
\newcommand\rd{\mathrm{d}}
\newcommand\bu{\boldsymbol{u}}
\newcommand\bg{\boldsymbol{g}}
\newcommand\bn{\boldsymbol{n}}
\newcommand\bv{\boldsymbol{v}}
\newcommand\bx{\boldsymbol{x}}
\newcommand{\bq}{\boldsymbol{q}}
\newcommand\bF{\boldsymbol{F}}
\newcommand{\He}{{\rm He}}
\newcommand\bT{\overline{T}}
\newcommand\oT{\overline{T}}
\newcommand\ou{\overline{\bu}}
\newcommand\ot{\overline{T}}
\newcommand\bbQ{\boldsymbol{Q}}
\newcommand\bbmM{\boldsymbol{\mM}}
\newcommand{\bz}{\boldsymbol{0}}
\newcommand\bbR{\mathbb{R}}
\newcommand\bbN{\mathbb{N}}
\newcommand\mB{\mathcal{B}}
\newcommand\mF{\mathcal{F}}
\newcommand\mQ{\mathcal{Q}}
\newcommand\mR{\mathbb{R}}
\newcommand\mM{\mathcal{M}}
\newcommand\Mq{\mM_q}
\newcommand\mH{\mathcal{H}}
\newcommand\mN{\mathbb{N}}
\newcommand\sv{\bv_{\ast}}
\newcommand\al{\alpha}
\newcommand\be{\beta}
\newcommand\aut{_{\al}^{\ou,\ot}}
\newcommand\but{_{\be}^{\ou,\ot}}
\newcommand{\mC}{\mathcal{C}}
\newcommand{\bbf}{{\bm{f}}}
\newcommand{\Li}{{\rm Li}}
\newtheorem*{definition}{Definition}
\newtheorem{lemma}{Lemma}
\newtheorem{remark}{Remark}
\numberwithin{equation}{section}
\title{A highly efficient asymptotic preserving IMEX method for the quantum BGK equation}
\author{
Ruo Li\thanks{CAPT, LMAM \& School of Mathematical Sciences, Peking University, Beijing, China, 100871; Chongqing Research Institute of Big Data, Peking University, Chongqing, China, 401121, email: {\tt
      rli@math.pku.edu.cn}.},~~Yixiao Lu\thanks{HEDPS, Center for Applied Physics and Technology \& School of Mathematical
    Sciences, Peking University, Beijing, China, 100871, email: {\tt
      luyixiao@pku.edu.cn}.},~~Yanli Wang\thanks{Beijing Computational
    Science Research Center, email: {\tt ylwang@csrc.ac.cn}.}}
\begin{document}
\maketitle

\begin{abstract}
   This paper presents an asymptotic preserving (AP) implicit-explicit (IMEX) scheme for solving the quantum BGK equation using the Hermite spectral method. The distribution function is expanded in a series of Hermite polynomials, with the Gaussian function serving as the weight function. The main challenge in this numerical scheme lies in efficiently expanding the quantum Maxwellian with the Hermite basis functions. To overcome this, we simplify the problem to the calculation of polylogarithms and propose an efficient algorithm to handle it, utilizing the Gauss-Hermite quadrature. Several numerical simulations, including a 2D spatial lid-driven cavity flow, demonstrate the AP property and remarkable efficiency of this method. 
    \vspace*{4mm}

    \noindent \textbf{Keywords:} quantum BGK equation; AP IMEX scheme; computation of polylogarithms; Hermite spectral method
\end{abstract}

    % We propose a numerical scheme for the quantum BGK model, which can tackle the periodic problems of 2D in space and 3D in velocity. Experiments of such dimensions are far beyond the existing results in the quantum kinetic regime. We raise a new efficient algorithm for the polylogarithms with a rescaled Gauss-Hermite quadrature, which is adopted to deal with the moments of the quantum BGK model. We compare this algorithm with the function \textbf{polylog} in MATLAB to verify its feasibility. The new numerical scheme is then completed based on the framework of the Hermite spectral method. After that, several numerical simulations demonstrate the accuracy and efficiency of this numerical scheme. 

\section{Introduction}

The quantum Boltzmann equation models the evolution of a dilute quantum gas flow, which was initially derived by Uehling and Uhlenbeck in \cite{Uehling1933, Uehling1934}. It incorporates quantum effects that cannot be neglected for light molecules at low temperatures. This equation is now applied not only to low-temperature gases but also to model both bosons and fermions, potentially trapped by a confining potential.

The quantum Boltzmann equation is formulated in six-dimensional physical and phase space. The collision operator in this equation involves a five-dimensional integral, where the integrand is combined with complicated cubic terms. These complexities pose significant challenges in studying the quantum Boltzmann equation, both theoretically and numerically. Notably, the Bose-Einstein condensation is a phenomenon wherein the distribution function can exhibit finite blow-up or weak convergence towards Dirac deltas, even when the kinetic energy is conserved \cite{Mouton2023}.

In this context, we focus on the numerical methods to solve the quantum Boltzmann equation. The initial attempt, proposed in \cite{Pareschi2005}, utilizes the symmetric property to simplify the collision term. Subsequently, leveraging the convolution-like structure of the collision operator, the fast Fourier method for the quantum Boltzmann equation has been introduced in \cite{Hu2012, Filbet2012}. This method has been extended to the inhomogeneous case in \cite{HuYing2015, HuJin2015, HuWang2015, Wu2019}. Additionally, the diffusive relaxation system has been adopted in \cite{Jin2000, Klar1999} to approximate the full collision term, and a Fokker-Planck-like approximation has been proposed in \cite{Hu2012qFPL, Carrillo2020}. In \cite{Mouton2023}, the Fourier spectral method is employed for the quantum Boltzmann-Nordheim equation, particularly for describing the long-time behavior of Bose-Einstein condensation and Fermi-Dirac saturation. 

However, numerically computing the intricate collision operator can be quite expensive, making it difficult to handle high-dimensional problems with these methods.
In the classical case, the BGK collision model serves as a widely used surrogate model for the Boltzmann operator, approximating collisions through a simple relaxation mechanism. In quantum kinetic regimes, the quantum BGK model is also extensively adopted to approximate the original collision operator \cite{Reinhard2015, Nouri2008}, and has also been extended to the multi-species case \cite{Bae2021, Bae2023}. Several numerical methods have been developed to tackle the quantum Boltzmann equation with the BGK model, often referred to as the quantum BGK equation. For instance, in \cite{Yang2009}, the lattice Boltzmann method is employed for the quantum BGK equation. In addition, a macroscopic reduced model known as the 13-moment system has been derived for the quantum BGK equation using modified Hermite polynomials \cite{Yano2014, Di2017}.

In this work, we propose an asymptotic preserving (AP) scheme for solving the quantum BGK equation using the Hermite spectral method. A specially chosen expansion center is adopted in the Gauss weight function to generate the related Hermite polynomials, enhancing the approximation accuracy of the basis functions. This method has proven successful in solving the classical Boltzmann equation \cite{ZhichengHu2019}, and has been extended to address the collisional plasma scenarios \cite{FPL2020}. For the quantum BGK equation, a primary challenge of the Hermite spectral method lies in approximating the quantum equilibrium. We present a highly efficient algorithm to obtain the expansion coefficients of the equilibrium within the framework of the Hermite spectral method. The complex computations are eventually reduced to evaluating the value of the polylogarithm function, which can be further simplified into an one-dimensional integral.

In the numerical experiments, the simulations with periodic initial values are first tested, and the order of convergence validates the AP property of this numerical scheme. Subsequently, the Sod problem is implemented and the numerical results are compared with the solutions of the full quantum Boltzmann equation in \cite{Hu2015}. The excellent agreement implies that the quantum BGK model serves as a good approximation of the original collision operator. Finally, the mixing regime problem and a spatially 2-dimensional lid-driven cavity flow are conducted to further demonstrate the superiority of this Hermite spectral method. 

The rest of this paper is organized as follows. In Sec. \ref{sec:QBGK}, the quantum Boltzmann equation and the BGK collision model are introduced. Sec. \ref{sec:numerical} presents the general framework of the Hermite spectral method to solve the quantum BGK equation. A highly efficient algorithm to approximate the equilibrium is proposed in Sec. \ref{sec:int_alg} with several numerical experiments displayed in Sec. \ref{sec:experiment} to validate this Hermite spectral method. This paper concludes with some remarks in Sec. \ref{sec:conclusion} and additional content in App. \ref{sec:app}.

\section{Preliminaries}
\label{sec:QBGK}
In this section, we provide a concise overview of the quantum Boltzmann equation and discuss the quantum BGK model, which serves as a simplified collision operator for the quantum gas.

\subsection{Quantum Boltzmann equation}
The quantum Boltzmann equation governs the time evolution of the phase-space density $f(t, \bx, \bv)$, representing the probability of finding a quantum particle at time $t\geqslant 0$ in the phase-space volume $\rd \bx \rd \bv$. Here $\bx \in \Omega \subset \bbR^{D}$ is the dimensionless position variable, and $\bv \in \bbR^{D}$ is the dimensionless microscopic velocity variable. The dimensionless form of the quantum Boltzmann equation can is expressed as \cite{Uehling1933}
\begin{equation}
    \label{eq:Boltz}
    \pd{f}{t}+\bv\cdot\nabla_{\bx} f=\frac{1}{\epsilon}\mQ[f](\bv), \quad t\geqslant0, \quad \bx \in \Omega \subset \bbR^{D}, \quad \bv \in \bbR^{D},
  %   \pd{f}{t}+\bv\cdot\nabla_{\bx} f=\mQ[f],
\end{equation}
where $\epsilon$ is the Knudsen number, and $D$ is the dimension. % of the spatial space, and $D$ is the dimensional of the microscopic velocity space. It is obvious that $D \leqslant D$. 
$\mQ[f](\bv)$ represents the collision operator with quantum effect. The original collision term has the cubic form as follows: 
\begin{equation}
    \label{eq:operator}
       \mQ_{\rm q}[f](\bv)=\int_{\mR^{D}}\int_{\mathbb{S}^{D-1}}B(|\bg|,\sigma)\Big[f'f_{\ast}'\big(1-\theta_0f\big)\big(1-\theta_0f_{\ast}\big)-ff_{\ast}\big(1- \theta_0f'\big)\big(1-\theta_0f_{\ast}'\big)\Big]\rd\sigma \rd \sv,
   \end{equation}
where $\bg=\bv-\sv$, and $f, f_{\ast}, f'$ and $f_{\ast}'$ represent $f(t, \bx, \bv), f(t, \bx, \sv), f(t, \bx, \bv') $ and $f(t, \bx, \sv')$. $(\bv, \sv)$ and $(\bv', \sv')$ are the pre-collision and the post-collision velocity, respectively, which are determined by 
\begin{equation}
    \label{eq:post-velo}
    \left\{
    \begin{array}{l}
        \bv'=\frac{\bv+\sv}{2}+\frac{1}{2}|\bv-\sv|\sigma,\\[2mm]
        \sv'=\frac{\bv+\sv}{2}-\frac{1}{2}|\bv-\sv|\sigma,\\
    \end{array}
    \right.
\end{equation}
where $\sigma\in \mathbb{S}^{D -1}$ is the unit vector along $\bv'-\sv'$. The collision kernel $B$ is a non-negative function that depends only on $|\bg|$ and $\cos \omega$, where $\omega$ is the angle between $\sigma$ and $\bg$ \cite{Filbet2012}. The parameter $\theta_0$ indicates the type of particles \cite{Mouton2023},  which can be classified into three types:
\begin{itemize}
    \item when $\theta_0 = \hbar^{D} > 0$, the particles are the Fermi-Dirac gas, also referred to as the Fermi gas. Here, $\hbar$ represents the rescaled Planck constant \cite{Mouton2023}. For the Fermi gas, the Pauli exclusion principle gives us the inequality \cite{Filbet2012}
    \begin{equation}
        \label{eq:pauli}
        f \leqslant \frac{1}{\theta_0}.
    \end{equation}
    \item when $\theta_0 = -\hbar^{D} < 0$, the particles are the Bose-Einstein gas, or the Bose gas.
    \item when $\theta_0 = 0$, the collision model \eqref{eq:operator} reduces to the classical Boltzmann collision operator 
    \begin{equation}
        \label{eq:classical_col}
        \mQ_c[f](\bv)=\int_{\mR^{D}}\int_{\mathbb{S}^{D-1}}B(|\bg|,\sigma)\Big(f'f_{\ast}'-ff_{\ast}\Big)\rd\sigma \rd \sv,
    \end{equation}
    and the particles are the classical gases. 
\end{itemize}

In the dimensionless quantum Boltzmann equation \eqref{eq:Boltz}, the macroscopic variables such as density $\rho$, velocity $\bu$ and internal energy $e_0$ are related to the distribution function $f(t, \bx, \bv)$ through the following equations:
\begin{subequations}
    \label{eq:macro}
    \begin{align}
    \label{eq:macro1}
    &\rho(t, \bx)=\int_{\mR^{D}}f(t, \bx,\bv)\rd\bv , \\
    \label{eq:macro2}
    &\bu(t, \bx)=\frac{1}{\rho}\int_{\mR^{D}}\bv f(t, \bx,\bv)\rd\bv , \\
    \label{eq:macro3}
    &e_0(t, \bx)=\frac{1}{2\rho}\int_{\mR^{D}}|\bv-\bu|^2f(t, \bx,\bv)\rd\bv .
    \end{align}
\end{subequations}
Additionally, the stress tensor $\mathbb{P}$ and the heat flux $\bq$ are defined as 
\begin{equation}
    \label{eq:P_q}
    \mathbb{P} = \int_{\mR^{D}} \left[(\bv-\bu)\otimes(\bv-\bu)-\frac13|\bv-\bu|^2I\right] f \rd \bv, \qquad \bq = \frac{1}{2} \int_{\mR^{D}}(\bv - \bu)|\bv -\bu|^2 f \rd \bv. 
\end{equation}

Compared to the classical Boltzmann equation \eqref{eq:classical_col}, the quantum Boltzmann operator \eqref{eq:operator} exhibits cubic dependence on the distribution density $f$, and involves more nonlinearity. This makes the theoretical and numerical study of the quantum Boltzmann equation much more challenging \cite{Hu2015, Filbet2012, Mouton2023}. 

\subsection{The quantum BGK model}
Similarly to the classical kinetic theory, a BGK-type model \cite{CCer1988, Nouri2008} is introduced in the quantum case to facilitate the study in the near continuous fluid regime. This simplified model approximates the complex quantum collision model \eqref{eq:operator} with the relaxation form as follows:
\begin{equation}
\label{eq:BGK}
    \mQ_{\rm qBGK}[f](\bv) = \mM_q - f,
\end{equation}
which is referred to as the quantum BGK model. Substituting $\mQ[f]=\mQ_{\rm qBGK}[f]$ into \eqref{eq:Boltz} yields the quantum BGK equation. In \eqref{eq:BGK}, $\mM_q$ represents the local equilibrium, also known as the quantum Maxwellian:
\begin{equation}
    \label{eq:Maxwellian}
   \mM_q(t, \bx, \bv)\triangleq \frac{1}{|\theta_0|} \frac{1}{(z|\theta_0|)^{-1}\exp\left(\frac{(\bv-\bu)^2}{2T}\right)+ {\rm sign}(\theta_0)}=\frac{1}{z^{-1}\exp\left(\frac{(\bv-\bu)^2}{2T}\right)+\theta_0},
\end{equation}
where $z|\theta_0| > 0$ represents the fugacity, and $T>0$ is the temperature. Determining $z$ and $T$ will be discussed later in \eqref{eq:z_T}. $\Mq$ also satisfies $\mQ_q[\Mq] = 0$. 

For the Bose gas ($\theta_0 < 0$), ensuring the non-negativity of $\mM_q$ in \eqref{eq:Maxwellian} requires:
\begin{equation}
    \label{eq:theta_BE}
    z \theta_0 \in [-1, 0).
\end{equation}
In particular, when $z \theta_0 = -1$, Bose-Einstein condensation occurs, and the steady state differs from \eqref{eq:Maxwellian}, taking the form \cite{Hu2015, Mouton2023}
\begin{equation}
    \label{eq:condensation}
    \tilde{\mM}_q(t, \bx, \bv)=m_0\delta(\bv-\bu)+\frac{1}{|\theta_0|}\frac{1}{\exp\left(\frac{(\bv-\bu)^2}{2T}\right)-1},
\end{equation}
where $m_0$ is the critical mass, and $\delta(\cdot)$ is the Dirac delta function. For the Fermi gas ($\theta_0>0$), no additional constraint on $z$ is required to obtain a quantum Maxwellian $\Mq$. If $\theta_0=0$, $\mM_q$ reduces to the classical Maxwellian with macroscopic velocity $\bu$ and temperature $T$: 
\begin{equation}
    \label{eq:c_Maxwellian}
    \mM^{\bu, T}_c(\bv)=\frac{\rho}{ (2 \pi T)^{D/2}}\exp\left(-\frac{|\bv-\bu|^2}{2T}\right). 
\end{equation}

When $|\theta_0|$ is small, $\mM_q$ is close to $\mM^{\bu, T}_c$, and the quantum BGK model resembles the classical BGK model. For large $|\theta_0|$, $\mM_q$ behaves quite differently from $\mM^{\bu, T}_c$, and the quantum effect becomes significant. These phenomena are illustrated in \cite[Figs. 1 and 2]{Filbet2012}.

There are several important properties of the collision operator. Firstly, it conserves the total mass, momentum, and energy as \cite{Nouri2008, Hu2015}
\begin{equation}
    \label{eq:conserv}
    \int_{\bbR^{D}}\mQ[f](\bv)\left(
    \begin{array}{c}
        1 \\
        \bv \\
        |\bv|^2
    \end{array}
    \right)\rd\bv=0, \qquad \mQ[f](\bv) = \mQ_q[f](\bv),\; \mQ_{\rm qBGK}[f](\bv). 
\end{equation}
Moreover, letting
$$\varphi(\bv)=\ln\left(\frac{f(\bv)}{1-\theta_0f(\bv)}\right),$$
one can derive the H-theorem of the quantum Boltzmann equation as 
\begin{equation}
    \label{eq:H-thm}
    \int_{\bbR^{D}}\mQ[f](\bv)\varphi(\bv)\rd\bv\leqslant 0, \qquad \mQ[f](\bv) = \mQ_q[f](\bv),\; \mQ_{\rm q BGK}[f](\bv),
\end{equation}
and this equality holds if and only if $f$ attains the quantum Maxwellian \cite{Nouri2008, Hu2015}. In \eqref{eq:Maxwellian}, the parameters $z$ and $T$ can be obtained through the nonlinear system
\begin{equation}
    \label{eq:z_T}
    \begin{aligned}
     \int_{\mR^{D}} \mM_q(t, \bx, \bv) \rd \bv  & = \int_{\mR^{D}}f(t, \bx,\bv)\rd\bv = \rho, \\
     \int_{\mR^{D}}|\bv-\bu|^2\mM_q(t, \bx,\bv)\rd\bv & = \int_{\mR^{D}}|\bv-\bu|^2f(t, \bx,\bv)\rd\bv = 2\rho e_0.
    \end{aligned}
\end{equation}
For the degenerate Bose-Einstein case \eqref{eq:condensation}, the values of $m_0$ and $T$ can be computed as 
\begin{equation}
    \label{eq:BE_deg_T}
    T=\frac{2\zeta(\frac{D}{2})}{D\zeta\left(\frac{D+2}{2}\right)}e_0, \qquad m_0=\rho-\frac{(2 \pi T)^{D/2}}{|\theta_0|}\zeta\left(\frac{D}{2}\right),
\end{equation}
where $\zeta(s):=\Li_s(1)$ represents the Riemann zeta function. Furthermore, for the Bose-Einstein condensation steady state \eqref{eq:condensation}, the conservation of macroscopic variables \eqref{eq:conserv} and the H-theorem \eqref{eq:H-thm} persist.

In the following, we will propose a numerical scheme for the quantum BGK equation. Specifically, an Implicit-Explicit (IMEX) method will be introduced within the framework of the Hermite spectral method. Additionally, an efficient algorithm will be presented for the calculation of polylogarithms, which is crucial for deriving the expansion coefficients of the distribution function and solving the nonlinear system \eqref{eq:z_T}.

% When the Knudsen number $\epsilon$ approaches $0$ in \eqref{eq:Boltz}, the distribution function $f$ is converging to the quantum Maxweliian $\mM_q$. Substituting $\mM_q$ into \eqref{eq:Boltz}, multiplying $(1,\bv,\frac{|\bv|^2}{2})$ on both sides of \eqref{eq:Boltz} and integrating over $\bv$, we can derive the Euler system as 
% \begin{equation}
%     \label{eq:QEuler}
%     \left\{
%     \begin{array}{l}
%          \pd{}{t}\rho+\nabla_{\bx}\cdot(\rho\bu)=0, \\
%          \\
%          \pd{}{t}(\rho \bu)+\nabla_{\bx}\cdot\left(\rho \bu\otimes \bu+\mathbb{P}\right)=0, \\
%          \\
%          \pd{}{t}(\rho e_0+\frac12\rho \bu^2) +\nabla_{\bx}\cdot\left[\left(\rho e_0+\frac12\rho \bu^2\right)\bu + \mathbb{P} \bu +\bq \right]=0,
%     \end{array}
%     \right.
% \end{equation}
% which is closed by $\mathbb{P} = \frac{2}{D} \rho e_0 I $ and  $\bq  = 0.$ We can find that the Euler system \eqref{eq:QEuler} is the same as the classical case. However, the form will be much more complicated if the variables $z, \bu, T$ are utilized in \eqref{eq:QEuler}. Here, we also want to emphasize that the moments of $\Mq$ as $z$ and $T$ are obtained through \eqref{eq:z_T}, and it involves the calculation of polylogarithms, which will be much more difficult compared to the classical case and an efficient algorithm to solve this system in discussed in Sec. \ref{sec:solve}.

%In what follows, the abbreviation $\mQ=\mQ_{\rm qBGK}$ will be utilized for simplicity. 

\section{Hermite spectral method for the quantum BGK equation}

\label{sec:numerical}
This section introduces the Hermite spectral method for the quantum BGK equation. We begin by discussing the approximation of the distribution function and deriving the moment system in Sec. \ref{sec:Hermite}. Subsequently, the numerical scheme with complete discretization is presented in Sec. \ref{sec:discretize}.

\subsection{Series expansion of the distribution function and the moment system}
\label{sec:Hermite}
To seek a polynomial spectral method for solving the quantum BGK equation, a natural approach is to consider $\Mq$ as the weight function. However, as shown in \cite{Yano2014}, the orthogonal polynomials with respect to $\Mq$ are quite complicated. 

It is observed that when $|\theta_0|$ is small, the classical Maxwellian $\mM_c^{\bu, T}$ serves as a good approximation to $\mM_q$. Therefore, the classical Maxwellian $\mM_c^{\bu, T}$ defined in \eqref{eq:c_Maxwellian} is chosen as the weight function, and the resulting orthogonal polynomials are the Hermite polynomials. These polynomials are defined as follows:
\begin{definition}[Hermite polynomials]
\label{def:Her}
For $\alpha=(\alpha_1,\alpha_2, \cdots)\in\bbN^{D}$, with $\ou \in \bbR^{D}$ and $\oT \in \bbR^+$, the three-dimensional 
Hermite polynomial $H_{\alpha}^{\ou,\oT}(\bv)$ is defined as
\begin{equation}
    \label{eq:Hermite}
    H_{\alpha}^{\ou,\oT}(\bv)=\frac{(-1)^{|\alpha|}\oT^{\frac{|\alpha|}{2}}}
    {\mM_c^{\ou,\oT}(\bv)}
    \dfrac{\partial^{|\alpha|}}{\partial \bv^{\alpha}}
    \mM_c^{\ou,\oT}(\bv), 
\end{equation}
with $|\alpha|=\sum_{d = 1}^{D} \alpha_d$, $\partial \bv^{\alpha} = \Pi_{d = 1}^{D}\pa v_{d}^{\al_d}$ and $\mM_c^{\ou,\oT}(\bv)$ defined in \eqref{eq:c_Maxwellian}. Here, $[\ou, \oT]$ is the expansion center, typically determined by a rough average over the entire spatial space.
\end{definition}
Then the distribution function $f$ can be expanded as 
\begin{equation}
    \label{eq:Her-expan}
    f(t,\bx,\bv)=\sum_{\alpha\in\mN^{D}}f_{\alpha}(t,\bx)
		\mH\aut(\bv),
\end{equation}
where $\mH\aut(\bv)=H\aut(\bv)\mM_c^{\ou,\oT}(\bv)$ are the Hermite basis functions. By truncating the expansion in \eqref{eq:Her-expan}, a finite approximation to the distribution function is obtained:
\begin{equation}
    \label{eq:trun-expan}
    f(t,\bx,\bv)\approx f_M(t,\bx,\bv)\triangleq\sum_{|\alpha|\leqslant M}f_{\alpha}(t,\bx)	\mH\aut(\bv),
\end{equation}
where $M$ is the expansion order. Similarly, the quantum Maxwellian \eqref{eq:Maxwellian} is approximated as 
\begin{equation}
    \label{eq:expan-BGK}
    \mM_q(t, \bx, \bv)\approx\sum_{|\alpha|\leqslant M}\mM_{q, \alpha}(t, \bx) \mH\aut(\bv).
\end{equation}
With the orthogonality of the Hermite polynomials 
\begin{equation}
    \label{eq:orth}
    \int_{\mR^3}H\aut(\bv)H\but(\bv)\mM_c^{\ou,\oT}(\bv)\rd\bv= \prod_{d=1}^{D}\alpha_d!
    \delta_{\alpha_d,\beta_d},
\end{equation}
the expansion coefficients $f_{\alpha}(t, \bx)$ and $\mM_{q, \alpha}(t, \bx)$ are calculated as 
\begin{align}
    \label{eq:falpha}
    f_{\alpha}(t, \bx) &= \frac{1}{\alpha!}\int_{\mR^{D}}f(t, \bx, \bv) H\aut(\bv)
    \rd\bv, \\
    \label{eq:Malpha}
    \mM_{q, \alpha}(t, \bx) &= \frac{1}{\alpha!}\int_{\mR^{D}}\mM_q(t, \bx, \bv) H\aut(\bv)
    \rd\bv.
\end{align}
With the Hermite expansion, the macroscopic variables defined in \eqref{eq:macro} and \eqref{eq:P_q} can be expressed using the expansion coefficients as 
%such as the density $\rho$, macroscopic velocity $\bu$, and internal energy $e_0$ can be easily expressed by the expansion coefficients as 
\begin{equation}
    \label{eq:exp-macro}
    \begin{split}
        &\rho = f_{\bz},\quad u_k= \overline{u}_k+\frac{\sqrt{\oT}}{\rho}f_{e_k}, 
    \quad e_0=\frac{\oT}{\rho}\sum_{k=1}^{D}f_{2e_k}+\frac{D}{2}\oT-\frac{1}{2\rho}|\bu-\ou|^2, \\
    &p_{kl}=(1+\delta_{kl})\oT f_{e_i+e_j}+\delta_{kl}\rho\left(\oT - \theta\right)-
    \rho\left(\overline{u}_k-u_k\right)\left(\overline{u}_l-u_l\right), \\
    &q_k=2\oT^{\frac{3}{2}}f_{3e_k}+(\overline{u}_k-u_k)\oT f_{2e_k}+|\ou-\bu|^2\sqrt{\oT} f_{e_k} +\\ 
    & \qquad \qquad\sum_{d=1}^{D}\left[\oT^{\frac{3}{2}}f_{2e_d+e_k}+\left(\overline{u}_d-u_d\right)\oT f_{e_d+e_k}+  
     \left(\overline{u}_k-u_k\right)\oT f_{2e_d}\right], \qquad k, l = 1, 2, \cdots D.
    \end{split}
\end{equation}
Here, $e_d$ represents the unit vector. For example, when $D = 3$, $e_1=(1,0,0), e_2=(0,1,0), e_3=(0,0,1)$.  
Additionally, by substituting the Hermite expansion of $f$ and $\mM_{q}$ into \eqref{eq:Boltz}, we can obtain the moment system as
\begin{equation}
    \label{eq:moment}
    \pd{}{t}f_{\alpha}+\sum_{d=1}^{D}\pd{}{x_d}\left((\alpha_d+1)\sqrt{\ot}f_{\alpha+e_d}+\overline{u}_df_{\alpha}+\sqrt{\ot}f_{\alpha-e_d}\right)=\frac{1}{\epsilon}Q_{\alpha}, \qquad |\alpha|\leqslant M,
\end{equation}
where the collision term $Q_{\alpha}$ arises from the quantum BGK model \eqref{eq:BGK} and is expressed as
\begin{equation}
    \label{eq:exp_Q}
    Q_{\alpha} = \mM_{q, \alpha} - f_{\alpha}.
\end{equation}
Besides, the convection term is simplified with the recurrence relationship of Hermite polynomials as 
\begin{equation}
    \label{eq:recur}
    \begin{split}
    v_dH^{\ou,\oT}_{\alpha}=\sqrt{\oT}H^{\ou,\oT}_{\alpha+e_d}+
    u_dH^{\ou,\oT}_{\alpha}+\al_d\sqrt{\oT}H^{\ou,\oT}_{\alpha-e_d}.
    \end{split}
\end{equation}
The system \eqref{eq:moment} is closed with the constraint
\begin{equation}
    \label{eq:closure}
    f_{\alpha + e_d} = 0, \qquad |\alpha| = M.
\end{equation}

Let $\bbf = (f_{\bz}, f_{e_1}, f_{e_2}, f_{e_3}, \cdots)$ represent the vector of expansion coefficients of the distribution function $f$. \eqref{eq:moment} can be expressed in matrix form as
\begin{equation}
    \label{eq:moment1}
    \pd{\bbf}{t} + \sum_{d = 1}^{D}{\mathbf A_d} \pd{\bbf}{x_d} = \frac{1}{\epsilon} \bbQ, \qquad \bbQ = \bbmM_q - \bbf, 
\end{equation}
where $\bbQ = (Q_{\bz}, Q_{e_1}, Q_{e_2}, Q_{e_3}, \cdots)$ and $\bbmM_q = (\mM_{q, \bz}, \mM_{q, e_1}, \mM_{q, e_2}, \cdots)$. ${\mathbf A_d}$ is a matrix whose entries are decided by the convection coefficients in \eqref{eq:moment}.

The Hermite spectral method has been successfully employed to solve the classical Boltzmann equation \cite{ZhichengHu2019}, and has been extended to the plasma kinetic models \cite{FPL2020}. Following similar procedures, we will complete the full discretization of the moment system in Sec. \ref{sec:discretize}.

\subsection{Temporal and spatial discretization}
\label{sec:discretize}
In this section, we focus on the numerical scheme to discretize the moment system \eqref{eq:moment1} of the quantum BGK equation. We start with the temporal discretization, employing the implicit-explicit (IMEX) scheme to handle the stiff collision term.
\paragraph{Temporal discretization}
Assuming the numerical solution at time step $t^n$ is $\bbf^n$, then the temporal discretization for the first-order IMEX scheme takes the form
\begin{equation}
    \label{eq:AP_time}
        \frac{\bbf^{n+1}-\bbf^n}{\Delta t}+ \sum_{d = 1}^{D} {\mathbf A}_d \pd{\bbf^n}{x_d} = \frac{1}{\epsilon}\bbQ^{n+1}.
\end{equation}
In the simulation, \eqref{eq:AP_time} is split into 
\begin{itemize}
    \item convection step:
    \begin{equation}
        \label{eq:con_step}
        \frac{\bbf^{n+1, \ast}-\bbf^n}{\Delta t}+ \sum_{d = 1}^{D} {\mathbf A}_d \pd{\bbf^n}{x_d} = 0,
    \end{equation}
    \item collision step:
    \begin{equation}
        \label{eq:col_step}
        \frac{\bbf^{n+1}-\bbf^{n+1, \ast}}{\Delta t}=\frac{1}{\epsilon}\bbQ^{n+1}, \qquad \bbQ^{n+1} = \bbmM_q^{n+1} - \bbf^{n+1}. 
    \end{equation}
\end{itemize}
Since the collision conserves the total mass, momentum, and energy \eqref{eq:conserv}, it can be derived that $\bbmM_q^{n+1} = \bbmM_q^{n+1, \ast}$. Therefore, the convection step is first solved, and then $\bbmM^{n+1}_q$ is obtained based on $\bbf^{n+1, \ast}$. Finally, the collision step is solved with the computational cost of an explicit scheme. 

This first-order IMEX scheme can be easily extended into the high-order scheme, and we only present the second-order scheme below: 
\begin{subnumcases}
    {\label{eq:second}}
         \frac{\bbf^{n+1/2}-\bbf^n}{\Delta t/2}+ \sum_{d = 1}^{D} {\mathbf A}_d \pd{\bbf^n}{x_d} = \frac{1}{\epsilon}\bbQ^{n+1/2}, \\
         \frac{\bbf^{n+1}-\bbf^n}{\Delta t}+ \sum_{d = 1}^{D} {\mathbf A}_d \pd{\bbf^{n+1/2}}{x_d} = \frac{1}{2\epsilon}(\bbQ^{n+1}+\bbQ^n).
\end{subnumcases}
The same splitting method can also be applied to this second-order scheme. More high-order IMEX schemes can be referred to \cite{IMEX}. 

The time step length is chosen to satisfy the CFL condition
\begin{equation}
    \label{eq:CFL}
    {\rm CFL}\triangleq \frac{\Delta t}{\Delta x} \max_{d}\lambda({\mathbf{A}_d}) < 1,
\end{equation}
where $\lambda({\mathbf{A}_d})$ represents the spectral radius (i.e. the maximum absolute value of all the eigenvalues) of matrix ${\mathbf{A}_d}$. For further discussions about the eigenvalues of ${\mathbf{A}_d}$, we refer the readers to \cite{ZhichengHu2019}.

\paragraph{Spatial discretization}
For spatial discretization, the finite volume method is adopted for the moment system \eqref{eq:moment1}. Let the spatial domain $\Omega \subset \bbR^{D}$ be discretized by a uniform grid with cell size $(\Delta x_1, \Delta x_2, \cdots) \in \bbR^{D}$ and cell centers $\bx_k = (x_{k_1}, x_{k_2}, \cdots) \in \bbR^{D}$. Denoting $\bbf_{k}^n$ as the approximation of the average of $\bbf$ over the $k$-th grid cell at time $t^n$, the finite volume method for the convection step has the form
\begin{equation}
    \label{eq:FVM}
    \bbf_{k}^{n+1, \ast}=\bbf_{k}^n-\sum_{d=1}^{D}\frac{\Delta t}{\Delta x_d}\left(\bF_{k+\frac12 e_d}^n-\bF_{k-\frac12 e_d}^n\right),
\end{equation}
where $\bF_{k+\frac12 e_d}^n$ is the numerical flux computed by the HLL scheme \cite{HLL} with spatial reconstruction utilized. Detailed expressions can be found in \cite[Sec. 4.2]{multi2022}. 

With this spatial discretization, the numerical scheme to solve the collision step is given by
\begin{equation}
    \label{eq:col_fin}
     \frac{\bbf^{n+1}_k-\bbf^{n+1, \ast}_k}{\Delta t}=\frac{1}{\epsilon}\left( \bbmM_{q, k}^{n+1} - \bbf_k^{n+1}\right).   
\end{equation}

\begin{algorithm}[htbp]
    \caption{IMEX Hermite spectral method to solve the quantum BGK equation}
    \label{algo:algBGK}
    \begin{algorithmic}[1]
        \item {\bf Initialize:} Set $n = 0$, and choose an expansion center $[\ou, \oT]$ for the convection step. Calculate the initial value of $\bbf_{k}^{0}$ and $\bbmM_{q,k}^0$. 
        \item {\bf Step forward in one single time step:}
        \begin{algorithmic}[1]
            \item Determine the time step length $\Delta t^n$ satisfying the CFL condition \eqref{eq:CFL}. 
            \item Solve the convection step \eqref{eq:FVM} to obtain $\bbf^{n+1, \ast}_k$.
            \item Obtain the density $\rho^{n+1}_k$, macroscopic velocity $\bu_k^{n+1}$ and internal energy $e_{0, k}^{n+1}$ through \eqref{eq:macro}. 
            \item Obtain the parameters $z^{n+1}$ and $T^{n+1}$ through \eqref{eq:z_T}, and derive the quantum Maxwellian $\mM^{n+1}_{q, k}$. \label{step:zT}
            \item Calculate the expansion coefficients $\bbmM_{q, k}^{n+1}$ through \eqref{eq:Malpha}. \label{step:Maxwellian}
            \item Solve the collision step \eqref{eq:col_fin} to obtain $\bbf^{n+1}_k$. 
        \end{algorithmic}
    \end{algorithmic}
\end{algorithm}

So far, we have presented the complete discretization of the quantum BGK equation, and the entire algorithm is outlined in Alg. \ref{algo:algBGK}. However, it is important to note that compared with the classical case \cite{ZhichengHu2019}, obtaining the expansion coefficients of the quantum Maxwellian in Step \ref{step:Maxwellian} poses greater challenges. Additionally, one has to obtain the parameter $z$ and temperature $T$ through the nonlinear system \eqref{eq:z_T} in Step \ref{step:zT}. These two problems will be addressed in the following Sec. \ref{sec:int_alg}. 

\section{Expansion of the quantum Maxwellian}
\label{sec:int_alg}
In this section, we will delve into the strategies for accomplishing Steps \ref{step:zT} and \ref{step:Maxwellian} in Alg. \ref{algo:algBGK}. Specifically, the algorithm for obtaining the expansion coefficients is presented in Sec. \ref{sec:polylog}, and the approach to solving the nonlinear system \eqref{eq:z_T} is discussed in Sec. \ref{sec:solve}. 

\subsection{Algorithm to obtain \texorpdfstring{$\mM_{q, \alpha}$}{Mq}}
\label{sec:polylog}

To compute the expansion coefficients $\mM_{q, \alpha}(t, \bx)$ in \eqref{eq:Malpha}, we begin with the exact expansion of Hermite polynomials. From the definition of the one-dimensional Hermite polynomials, when $[\ou, \oT] = [\bz, 1]$, it takes the form below:
\begin{equation}
\label{eq:He_1d}
  \He_n(x)=(-1)^n\exp\left(\frac{x^2}2\right) \frac{\rd^n}{\rd x^n}\exp\left(-\frac{x^2}2\right),
\end{equation}
which can be precisely expressed as
\begin{equation}
    \label{eq:coef_Her}
    \begin{aligned}
        \He_{2n}(x) & =\sum_{k=0}^n\frac{(2n-1)!!}{(2n-2k-1)!!}
        (-1)^kC_n^kx^{2n-2k}, \\
        \He_{2n-1}(x)& =\sum_{k=0}^{n-1}\frac{(2n-1)!!}{(2n-2k-1)!!}
        (-1)^kC_{n-1}^kx^{2n-2k-1},
    \end{aligned}
\end{equation}
where the combination number $C_n^k$ is defined as
\begin{equation}
    \label{eq:comb_number}
    C_n^k=\frac{n!}{k!(n-k)!}.
\end{equation}
With the transitivity
\begin{equation}
    \label{eq:tran}
    H\aut(\bv)=H_{\alpha}^{\bz,1}\left(\sqrt{\frac{1}{{\;\bT\;}}}(\bv-\ou)\right),
\end{equation}
it holds that 
\begin{equation}
    \label{eq:form_Hermite}
    H\aut(\bv)=\sum_{\beta\in \bbN^{D}:\; \beta_i\leqslant \alpha_i}\mC^{[\ou,\oT]}(\alpha,\beta) \bv^{\beta},
\end{equation}
where $\mC^{[\ou,\oT]}(\alpha,\beta)$ are constants that can be directly calculated by $[\ou, \oT]$. Therefore, to obtain the expansion coefficients $\mM_{q, \alpha}$, we only need to compute the coefficients 
\begin{equation}
    \label{eq:mom_M_0}
  \mM_{\alpha} =   \int_{\bbR^{D}}\mM_q(\bv)\bv^{\alpha}\rd\bv, \qquad \bv^{\alpha} = \prod_{d=1}^{D}v_d^{\alpha_d}, \qquad |\alpha|  \leqslant M. 
\end{equation}
Then $\mM_{q, \alpha}$ is calculated as
\begin{equation}
    \label{eq:M_q_q}
    \mM_{q, \alpha} = \sum_{\beta \in \bbN^{D}:\; \beta_i\leqslant \alpha_i}\mC^{[\ou,\oT]}(\alpha,\beta)\mM_{\beta}, \qquad |\alpha| \leqslant M.
\end{equation}
Without loss of generality, we assume the macroscopic velocity $\bu=\bz$.
%and the results will not be affected by the transform from $(\ou, \bu)$ to $(\ou-\bu, \bz)$. In this case, 
In this case, $\Mq$ is an even function of $\bv$, and it holds that 
\begin{equation}
    \label{eq:mom_M_1}
    \int_{\bbR^{D}}\mM_q(\bv)\bv^{\alpha}\rd\bv=0,
\end{equation}
if any entry of $\alpha$ is odd. 
When all entries of $\alpha$ are even, the expression of $\mM_{\alpha}$ is given by
\begin{equation}
    \label{eq:mom_M_2}
    \mM_{\alpha} = \int_{\bbR^{D}}\mM_q(\bv)\bv^{\alpha}\rd \bv
    % =\int_{\bbR^3}\frac{\bv^{\alpha}}{z^{-1}
    %     \exp\left(\frac{\bv^2}{2T}\right)+\theta_0}\rd \bv  
    = \int_{\bbR^{D}}z\exp\left(-\frac{\bv^2}{2T}\right)\frac{\bv^{\alpha}}{1+z\theta_0\exp\left(-\frac{\bv^2}{2T}\right)}\rd \bv.
\end{equation}
When $|z \theta_0| < 1$ and $\theta_0 \neq 0$, it follows that 
\begin{equation}
    \label{eq:mom_M_3}
    \frac{1}{1+z\theta_0\exp\left(-\frac{\bv^2}{2T}\right)} = \sum_{n=0}^{+\infty}\left[-z\theta_0\exp\left(-\frac{\bv^2}{2T}\right)\right]^n.
\end{equation}
By substituting \eqref{eq:mom_M_3} into \eqref{eq:mom_M_2}, the expression of $\mM_{\alpha}$ becomes 
\begin{equation}
    \label{eq:mom_M_4}
    \mM_{\alpha} = -\frac{\overline{\Gamma}(\frac{\alpha+1}{2})}{\theta_0}(2T)^S \sum_{n = 1}^{+\infty} \frac{(-z\theta_0)^n}{n^{S}}, \qquad S = \frac{|\alpha|+D}{2},
\end{equation}
where $\overline{\Gamma}\left(\frac{\alpha+1}{2}\right) = \prod_{l = 1}^{D}\Gamma(\frac{\alpha_l+1}{2})$ and $\Gamma(\cdot)$ denotes the Gamma function. To further simplify \eqref{eq:mom_M_4}, we introduce the polylogarithm function as follows \cite{Roughan2020}:
\begin{definition}[The polylogarithm function]
The polylogarithm function $\Li_s(y)$ is defined by a power series of $y$, which is also a Dirichlet series of $s$:
\begin{equation}
    \label{eq:log}
    \Li_s(y)=\sum_{k=1}^{\infty}\frac{y^k}{k^s},\quad |y|<1.
\end{equation}
\eqref{eq:log} is valid for arbitrary complex order $s$ and for all complex variables $y$ with $|y|< 1$. It can be extended to $|y| \geqslant 1$ through analytic continuation. 
\end{definition}

Substituting \eqref{eq:log} into \eqref{eq:mom_M_4}, we have 
\begin{equation}
    \label{eq:mom_M}
    \mM_{\alpha} = -\frac{{\overline{\Gamma}\left(\frac{\alpha+1}{2}\right)}}{\theta_0}(2T)^{\frac{|\alpha|+D}2}\Li_{\frac{|\alpha|+D}2}(-z\theta_0),\qquad z\theta_0 \in[-1, 0) \cup (0, +\infty).
\end{equation}

% With the definition of polylogarithm, and the domain of $z\theta_0$ in the quantum Maxwellian \eqref{eq:Maxwellian},  \eqref{eq:mom_M_4} can be further simplified as 
% \begin{equation}
%     \label{eq:mom_M}
%     \mM_{\alpha} = -\frac{{\overline{\Gamma}\left(\frac{\alpha+1}{2}\right)}}{\theta_0}(2T)^{\frac{|\alpha|+3}2}\Li_{\frac{|\alpha|+3}2}(-z\theta_0),\qquad z\theta_0 \in[-1, 0) \cup (0, +\infty).
% \end{equation}
\begin{remark}
When $\theta_0=0$, it is reduced into the classical case, and one can derive that
\begin{equation}
    \lim_{\theta_0\rightarrow 0} \frac{\Li_{s}(-z\theta_0)}{\theta_0} = -z, \quad \forall s>0. 
\end{equation}
In this case, \eqref{eq:mom_M} still holds, and $\mM_{\alpha}$ is calculated as 
\begin{equation}
    \label{eq:c_M_alpha}
    \mM_{\alpha} = \overline{\Gamma}\left(\frac{\alpha+1}{2}\right)(2T)^{\frac{|\alpha|+D}2}z, \qquad z = \frac{\rho}{(2\pi T)^{\frac{D}{2}}}.
    %= \overline{\Gamma}\left(\frac{\alpha+1}{2}\right)\frac{\rho (2T)^{\frac{|\alpha|}2}}{\pi^{\frac32}},
\end{equation}
% with 
% \begin{equation}
%     \label{eq:classical_z}
%     z=\frac{\rho}{(2\pi T)^{\frac32}}.
% \end{equation}
\end{remark}

From Alg. \ref{algo:algBGK}, it can be observed that $\mM_{\alpha}$ needs to be computed at each spatial position in each time step. Thus, an efficient algorithm is required to calculate the polylogarithm $\Li_s(y)$.

\paragraph{Calculations of the polylogarithm}
\label{sec:integral}
Several algorithms have been proposed to evaluate the polylogarithm $\Li_s(y)$. While the function \textbf{polylog} in MATLAB can be used for this purpose, the low efficiency restricts its applications in large-scale numerical simulations. Some efforts have been made to numerically compute polylogarithms for integer $s$ \cite{Vollinga2005, Duhr2019}, but they are inadequate for simulations involving quantum kinetic problems.

In fact, for the Bose and Fermi gas, based on \eqref{eq:Maxwellian} and \eqref{eq:mom_M_4}, the domain for $s$ and $y$ is given by
\begin{equation}
    \label{eq:region_s_y} 
    s = \frac{2n +D}{2}, \qquad n \in \bbN, \qquad y \in (-\infty, 1],
\end{equation}
and a method to compute $\Li_s(y)$ in this region would meet our demands. Inspired by the derivation of \eqref{eq:mom_M}, we transform the polylogarithm into an one-dimensional integral, expressed as
\begin{equation}
    \label{eq:rela-Li}
    \begin{split}
        \int_{\bbR}\frac{|x|^n}{\exp\left({\frac{x^2}{2}}\right)-y} \rd x&=\frac1y\int_{\bbR}\sum_{k=1}^{\infty}\left(y\exp\left(-\frac{x^2}{2}\right)\right)^k|x|^n\rd x \\
        % &=\frac{1}{y}\sum_{k=1}^{\infty}y^kk^{-\frac{n+1}2}2^{\frac{n+1}2}\Gamma\left(\frac{n+1}2\right)\\
        & =2^{\frac{n+1}2}\Gamma\left(\frac{n+1}2\right)\frac{\Li_{\frac{n+1}2}(y)}{y}, \qquad y \in (-1, 0)\cup(0, 1). 
    \end{split}
\end{equation}
By employing the analytical continuation with respect to $y$, the polylogarithm can be computed as
\begin{equation}
    \label{eq:algo-Li}
    \Li_{s}(y)=\frac{2^{-s}y}{\Gamma\left(s\right)} \int_{\bbR}\frac{x^{2n+2}}{\exp\left({\frac{x^2}{2}}\right)-y} \rd x, \qquad  s = \frac{2n+D}{2}, \quad n \in \bbN,   \quad 
    y \in(-\infty, 1],
\end{equation}
and the integral on the right-hand side can be approximated using a Gauss-type quadrature. The rescaled Gauss-Hermite quadrature
\begin{equation}
    \label{eq:rescale-GH}
    \int_{\bbR}g(x)\exp\left(-\frac{x^2}{\beta}\right)\rd x=\sqrt{\beta}\int_{\bbR}g(\sqrt{\beta}x)\exp\left(-{x^2}\right)\rd x\approx \sqrt{\beta}\sum_{k=1}^{N_{\rm int}} \omega_kf(\sqrt{\beta}x_k)
\end{equation}
is adopted here to evaluate this integral. Here, $N_{\rm int}$ represents the number of integral points, $x_k$ are the roots for the Hermite polynomial of degree $(N_{\rm int}+1)$, and $\omega_k$ are the integral weights. For further details on the Gauss-Hermite quadrature, readers may refer to \cite{Glaser2007}.

To enhance the efficiency of this integral approximation, we treat the scaling factor $\beta$ as a function of $y$. The integral in \eqref{eq:algo-Li} is then approximated by
\begin{equation}
    \label{eq:GH-Li}
    \int_{\bbR}\frac{1}{\exp\left({\frac{x^2}{2}}\right)-y}|x|^{2n+2}\rd x=\int_{\bbR}\exp\left(-\frac{x^2}{\beta(y)}\right)G(x,y)\rd x=\sqrt{\beta(y)}\sum_{k=1}^{N_{\rm int}} \omega_k G(\sqrt{\beta(y)}x_k,y),
\end{equation}
where 
\begin{equation}
    \label{eq:def-G}
    G(x,y)=\frac{\exp\left(\frac{x^2}{\beta(y)}\right)}{\exp\left({\frac{x^2}{2}}\right)-y}|x|^{2n+2}, \qquad n \in \bbN, \qquad  y \in(-\infty,0) \cup(0, 1].
\end{equation}

\begin{remark}
    The integral most commonly used to analyze $\Li_s(y)$ has the form
    \begin{equation}
        \label{eq:polylog_GL}
         \Li_{s}(y)=\frac{y}{\Gamma(s)}\int_{0}^{\infty}\frac{x^{s-1}}{\exp\left(x\right)-y} \rd x,  \quad y \in(-\infty, 1].    
    \end{equation}
For the common case $D=3$, the numerator is a function of $x$ with a half-integer index. However, the Gauss-type quadrature will be more accurate when the integrand behaves closely to a polynomial \cite{Burden2011numerical}. Therefore, we opt for the integral \eqref{eq:algo-Li} over \eqref{eq:polylog_GL} to compute the polylogarithm. Additionally, the choice of the scaling factor $\beta(y)$ in \eqref{eq:def-G} aims to make the integrand $G(x, y)$ more polynomial-like.
\end{remark}

In the numerical experiments, $\beta(y)$ is empirically chosen as 
\begin{equation}
    \label{eq:beta_y}
    \beta(y)=\left\{\begin{array}{ll}
        2-1.8y, & y\in(0, 1], \\
        1+\exp(y), &  y\in(-\infty, 0).
    \end{array}
    \right.
\end{equation}
It is always challenging to accurately calculate $x_k$ and $\omega_k$ when $N_{\rm int}$ is too large, so we set it as $N_{\rm int}=70$ in all subsequent tests. In App. \ref{app:compare}, this method is compared with the MATLAB algorithm \textbf{polylog}, which verifies its excellent efficiency and accuracy. This algorithm will also be employed to solve the nonlinear system in Sec. \ref{sec:solve}.

\subsection{Solving the nonlinear system}
\label{sec:solve}
In this section, we present the method for obtaining $z$ and $T$ in the quantum Maxwellian $\Mq$. Since $\rho$ and $e_0$ can be easily derived from the distribution function $f$, $z$ and $T$ can be solved from the nonlinear system \eqref{eq:z_T}. 

With the relationship \eqref{eq:mom_M}, the nonlinear system \eqref{eq:z_T} can be simplified as 
\begin{equation}
    \label{eq:newsystem}
    \begin{split}
        \rho&=-\frac{1}{\theta_0}(2\pi)^{\frac{D}{2}}T^{\frac{D}{2}}\Li_{\frac{D}{2}}(-z\theta_0), \\
        \rho e_0&=-\frac{D}{2\theta_0}(2\pi)^{\frac{D}{2}}T^{\frac{2+D}{2}}\Li_{\frac{2+D}{2}}(-z\theta_0).
    \end{split}
\end{equation}
Without loss of generality, we set $D = 3$ in this section, and the algorithm can be easily extended to other $D$. By eliminating $T$ in \eqref{eq:newsystem}, the system is reduced to a nonlinear equation of $z$ as 
\begin{equation}
    \label{eq:equation-z}
    \frac{\left|\Li_{\frac32}(-z\theta_0)\right|^{\frac52}}{\left|\Li_{\frac52}(-z\theta_0)\right|^{\frac32}}=|\theta_0|\rho\left(\frac{3}{4\pi e_0}\right)^{\frac{3}2}.
    %|\theta_0|\rho\left(\frac{3}{4\pi e_0}\right)^{\frac32}.
\end{equation}
We will first discuss the existence of the solution for the Bose-Einstein and Fermi-Dirac gas separately before introducing the algorithm to solve \eqref{eq:equation-z}.
%Now we follow a similar routine in \cite[Sec. 1.2]{Mouton2023} to solve the system. 
\paragraph{Bose-Einstein gas ($\theta_0<0$)}
As stated in \eqref{eq:theta_BE}, $-z \theta_0$ is restricted in $(0, 1]$ to ensure $\Mq$ is positive. Define 
\begin{equation}
    \label{eq:def-B}
    \mB(y)=
        \frac{\big(\Li_{\frac32}(y)\big)^{\frac52}}{\big(\Li_{\frac52}(y)\big)^{\frac32}}, \qquad  0<y\leqslant 1.
\end{equation}
Then \eqref{eq:equation-z} is reduced into 
\begin{equation}
    \label{eq:BEeq}
    \mB(y)=|\theta_0|\rho\left(\frac{3}{4\pi e_0}\right)^{\frac32},\qquad  y = -z \theta_0, \qquad 0<y\leqslant 1.
\end{equation}
As observed in \cite{Mouton2023}, $\mB(y)$ is continuous and non-decreasing on $(0,1]$. When $$|\theta_0|\rho\left(\frac{3}{4\pi e_0}\right)^{\frac32}\geqslant \mB(1),$$ the Bose-Einstein condensation occurs. In this case, $z \theta_0 = -1$, and the quantum Maxwellian is reduced into \eqref{eq:condensation} with the related parameters derived in \eqref{eq:BE_deg_T}.
Otherwise, a solution for $y$ exists in $(0,1]$.

\paragraph{Fermi-Dirac gas ($\theta_0>0$)} Unlike the Bose-Einstein gas, $z$ can be arbitrary large in the Fermi-Dirac distribution. To distinguish from the Bose-Einstein gas, we define 
\begin{equation}
    \label{eq:def-F}
    \mF(y) = \frac{\big(-\Li_{\frac32}(y)\big)^{\frac52}}{\big(-\Li_{\frac52}(y)\big)^{\frac32}}, \qquad y<0,
\end{equation}
and then \eqref{eq:equation-z} is reduced into 
\begin{equation}
    \label{eq:FDeq}
    \mF(y)=|\theta_0|\rho\left(\frac{3}{4\pi e_0}\right)^{\frac32}, \qquad y = -z \theta_0, \qquad  y\in (-\infty, 0). 
\end{equation}
As observed in \cite{Mouton2023}, $\mF(y)$ is continuous and non-increasing on $(-\infty, 0)$ and satisfies 
\begin{equation}
    \label{eq:limF}
    \lim_{y\to-\infty}\mF(y)=\frac53\sqrt{\frac{10}{\pi}}.
\end{equation}
The following lemma ensures that there exists a solution for $y$ in \eqref{eq:FDeq}.
\begin{lemma}
    \label{lemma:FD}
    Under the Pauli exclusion principle $f\leqslant \frac{1}{\theta_0}$, it holds that 
    \begin{equation}
    \label{eq:lemma-FD}
        \theta_0\rho\left(\frac{3}{4\pi e_0}\right)^{\frac32}\leqslant\frac53\sqrt{\frac{10}{\pi}}.
    \end{equation}
\end{lemma}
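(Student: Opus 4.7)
The idea is to recognize Lemma \ref{lemma:FD} as an isoperimetric-type statement in velocity space: among all densities $f$ obeying the Pauli constraint $0\leqslant f\leqslant 1/\theta_0$ with prescribed mass $\rho$, the quantity $e_0$ is minimized by the \emph{Fermi ground state}, i.e.\ the indicator of a ball centered at $\bu$. If that minimum value is plugged into the left-hand side of \eqref{eq:lemma-FD}, the resulting constant should match $\tfrac{5}{3}\sqrt{10/\pi}$ exactly, so the bound is actually sharp.

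Concretely, I would translate everything to $\bu=\bz$ (harmless by the definitions \eqref{eq:macro2}--\eqref{eq:macro3}), and introduce
\begin{equation*}
    f^{\ast}(\bv)=\frac{1}{\theta_0}\mathbf{1}_{\{|\bv|\leqslant R\}}, \qquad R=\left(\frac{3\theta_0\rho}{4\pi}\right)^{1/3},
\end{equation*}
where $R$ is chosen so that $\int f^{\ast}\rd\bv=\rho$. The first key step is the bathtub-type inequality
\begin{equation*}
    \int_{\bbR^3}|\bv|^2 f(\bv)\rd\bv \;\geqslant\; \int_{\bbR^3}|\bv|^2 f^{\ast}(\bv)\rd\bv,
\end{equation*}
valid for any $f$ obeying the Pauli bound with $\int f=\rho$. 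I would prove it by splitting the difference $\int|\bv|^2(f-f^{\ast})\rd\bv$ over $B=\{|\bv|\leqslant R\}$ and its complement, using $f-f^{\ast}\leqslant 0$ and $|\bv|^2\leqslant R^2$ on $B$, and $f\geqslant 0$ and $|\bv|^2\geqslant R^2$ on $B^c$, which gives the lower bound $R^2\int_{\bbR^3}(f-f^{\ast})\rd\bv=0$.

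The second step is a short direct computation with $f^{\ast}$:
\begin{equation*}
    \rho=\frac{1}{\theta_0}\cdot\frac{4\pi R^3}{3}, \qquad 2\rho e_0^{\min}=\frac{1}{\theta_0}\int_{|\bv|\leqslant R}|\bv|^2\rd\bv=\frac{1}{\theta_0}\cdot\frac{4\pi R^5}{5},
\end{equation*}
from which $e_0^{\min}=\tfrac{3R^2}{10}$, and hence for the general $f$ the inequality $e_0\geqslant \tfrac{3}{10}\bigl(3\theta_0\rho/(4\pi)\bigr)^{2/3}$. Rearranging gives $\theta_0\rho\bigl(\tfrac{3}{4\pi e_0}\bigr)^{3/2}\leqslant \tfrac{5}{3}\sqrt{10/\pi}$, with equality precisely for the Fermi ground state, which is the desired bound.

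The only real subtlety is the rigorous justification of the bathtub rearrangement inequality in the first step; this is standard but should be written out carefully because it is what turns the Pauli constraint into a pointwise comparison with $f^{\ast}$. The arithmetic in the second step is routine, and the matching of the constant $\tfrac{5}{3}\sqrt{10/\pi}$ serves as a useful consistency check on the algebra.
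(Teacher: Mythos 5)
Your proposal is correct and follows essentially the same route as the paper: the paper's proof also compares $f$ with the Fermi--Dirac saturation $f_0=\frac{1}{\theta_0}\chi_{B(\bz,R)}$ with $\frac{4\pi}{3}R^3=\rho\theta_0$ and deduces $\rho e_0\geqslant\frac{2\pi}{5\theta_0}R^5$, which is exactly your $e_0\geqslant\frac{3R^2}{10}$. The only difference is that you spell out the bathtub-type justification of $\int|\bv|^2(f-f_0)\,\rd\bv\geqslant 0$, which the paper asserts without detail.
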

\begin{proof}[Proof of Lemma \ref{lemma:FD}]
    Without loss of generality, we assume the macroscopic velocity $\bu=\bz$. Let $R > 0$ satisfy 
    \begin{equation}
        \label{eq:R}
        \frac{4\pi}3 R^3 = \rho \theta_0,
    \end{equation}
    and define the auxiliary function $f_0$ as 
    \begin{equation}
        \label{eq:FD-sat}
        f_0(\bv)=\frac{1}{\theta_0}\chi_{B(\bz,R)}=\left\{
        \begin{array}{ll}
            \frac{1}{\theta_0}, & |\bv|\leqslant R,  \\
            0, & |\bv|>R,
        \end{array}
        \right.
    \end{equation}
which is also known as the Fermi-Dirac saturation, and represents the critical state of the Fermi gas. It follows for $f_0$ that 
    \begin{equation}
        \label{eq:mom-f0}
            \int_{\bbR^3}f_0(\bv)\rd\bv= \frac{4\pi}{3\theta_0} R^3=\rho, \qquad 
            \frac12\int_{\bbR^3}f_0(\bv)|\bv|^2\rd\bv=\frac{2\pi}{5 \theta_0} R^5.
    \end{equation}
Using the relation of the macroscopic variables and the distribution function $f$ as in \eqref{eq:macro}, we can derive that 
    \begin{equation}
        \label{eq:compare-e0}
        \begin{split}
           \rho e_0 - \frac{2\pi}{5 \theta_0} R^5 &=   \rho e_0-\frac12\int_{\bbR^3}f_0(\bv)|\bv|^2\rd\bv 
             = \frac12\int_{\bbR^3}\big(f(\bv)-f_0(\bv)\big)|\bv|^2\rd\bv \geqslant 0.
            % &=\frac12\int_{|\bv|>R}f(\bv)|\bv|^2\rd\bv+ \frac12\int_{|\bv|\leqslant R}\left(f(\bv)-\frac{1}{\theta_0}\right)|\bv|^2\rd\bv \\
            % &\geqslant \frac12\int_{|\bv|>R}f(\bv)R^2\rd\bv+ \frac12\int_{|\bv|\leqslant R}\left(f(\bv)-\frac{1}{\theta_0}\right)R^2\rd\bv \\
            % &=\frac12 R^2 \left[\int_{\bbR^3}f(\bv)\rd\bv-\int_{|\bv|\leqslant R}\frac{1}{\theta_0}\rd\bv\right] \\
            % &=0,                        
        \end{split}
    \end{equation}
    The proof is completed by combining \eqref{eq:compare-e0} and \eqref{eq:R}. 
    % which yields 
    % \begin{equation}
    %     \label{eq:ineq-e0}
    %     e_0\geqslant \frac{1}{\rho\theta_0}\frac{2\pi}5 R^5=\frac{3}{10}R^2.
    % \end{equation}
    % The proof is completed by substituting \eqref{eq:ineq-e0} into \eqref{eq:lemma-FD}.
\end{proof}

The derivative of the polylogarithm function can be expressed as
\begin{equation}
    \label{eq:deri_poly}
    \frac{\rd}{\rd y}\Li_s(y)=\frac{1}{y}\Li_{s-1}(y).
\end{equation}
Thus, if a solution exists in \eqref{eq:equation-z}, it can be numerically obtained through the Newton iteration method, with the stopping criterion set as
\begin{equation}
    \label{eq:stop_criterion}
    \left| \frac{\big|\Li_{\frac32}(y)\big|^{\frac52}}{\big|\Li_{\frac52}(y)\big|^{\frac32}}-|\theta_0|\rho\left(\frac{3}{4\pi e_0}\right)^{\frac32}\right|\leqslant 10^{-12},
\end{equation}
and the numerical solution at the last time step is utilized as the initial solution for the iteration. 

In practical numerical simulations, the iteration count remains quite low, typically less than $5$ for most cases. To illustrate the efficiency of this Newton iteration in detail, an example is presented in App. \ref{app:Newton}. Once $z$ is obtained, the temperature $T$ can be directly solved from \eqref{eq:newsystem}, concluding the algorithm for this nonlinear system.

% When numerically solving \eqref{eq:FDeq}, we first find the smallest non-negative integer $k$ s.t. $\mF(-2^k)\geqslant|\theta_0|\rho\left(\frac{3}{4\pi e_0}\right)^{\frac32}$, and then use the bisection method in $[-2^k, -2^{k-1}]$ (when $k=0$, the interval is $[0,1]$). If $\mF(-2^{64})<|\theta_0|\rho\left(\frac{3}{4\pi e_0}\right)^{\frac32}$, then the state is very close to the critical state and we will take $y=-2^{64}$ as the solution to avoid the overflow (this does not happen in our numerical experiments).

% Besides, the Newton iteration method could also be applied to solving \eqref{eq:BEeq} and \eqref{eq:FDeq}. With time discretization, we regard the results of the last time step as a good initial value of the iteration. As we have hoped, the Newton iteration method achieves higher efficiency in practice. 
% Therefore, in numerical simulations, we only need the bisection method to get the initial values and adopt the Newton iteration method in subsequent time steps. 

\section{Numerical experiments}
\label{sec:experiment}
In this section, several numerical examples are conducted to validate this numerical scheme for the quantum BGK equation. First, the asymptotic-preserving (AP) property of this Hermite spectral method is tested with a periodic flow. Subsequently, we examine its performance in the spatially one-dimensional cases, including the Sod and mixing regime problems. Finally, a spatially two-dimensional lid-driven cavity flow problem is simulated to further validate the accuracy and efficiency of this Hermite spectral method. 

\subsection{Test of the AP property}
\label{sec:exp_AP}
\begin{figure}[!ht]
  % \colorbox{black}
    \centering
    \subfloat[density $\rho$, $\theta_0 = 9$]
    {\includegraphics[width=0.33\textwidth,
      clip]{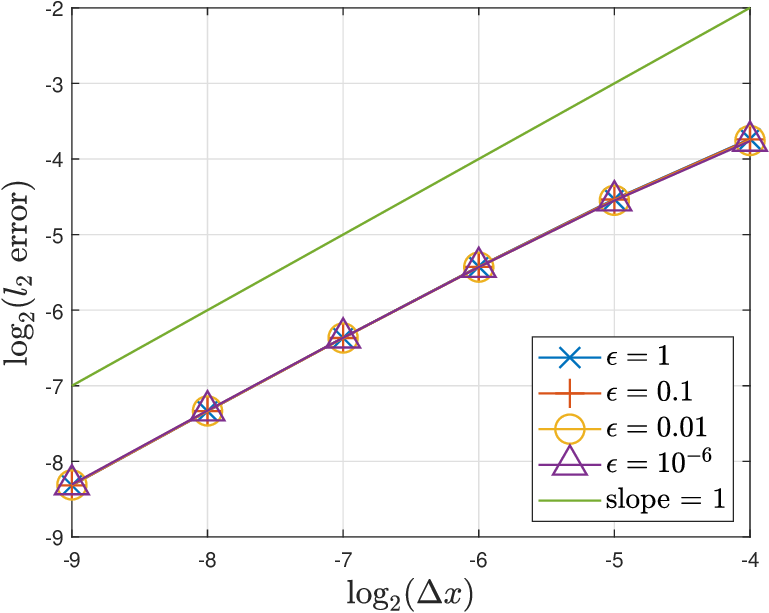}}\hfill
      \subfloat[temperature $T$, $\theta_0 = 9$]
    {\includegraphics[width=0.33\textwidth,
      clip]{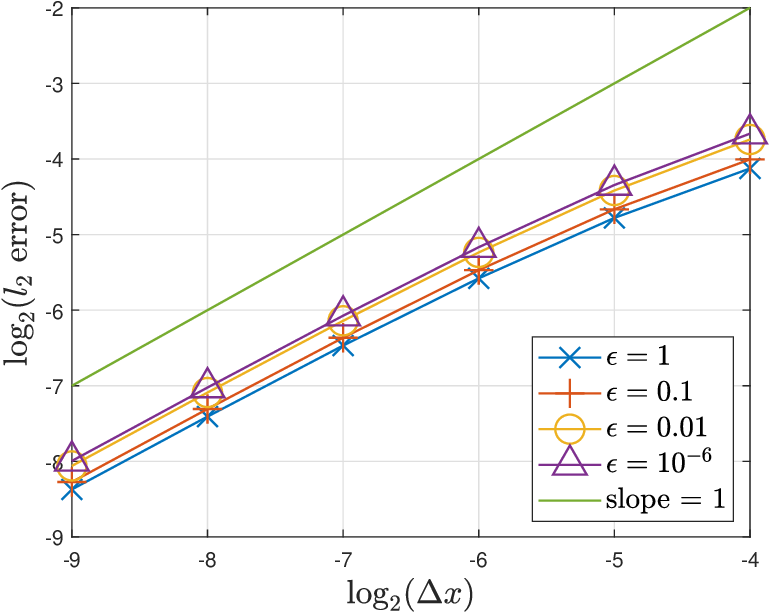}}\hfill
 \subfloat[fugacity $z|\theta_0|$, $\theta_0=9$]
    {\includegraphics[width=0.33\textwidth,
      clip]{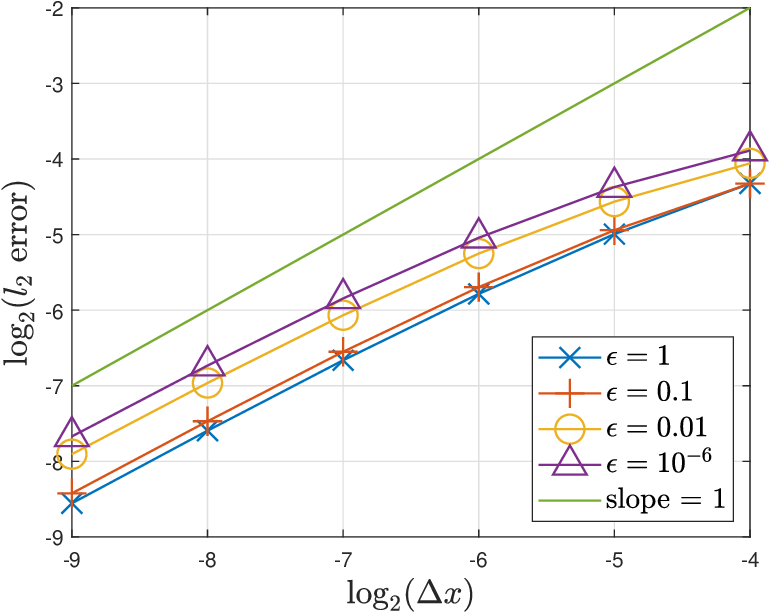}}\\
    \subfloat[density $\rho$, $\theta_0 = -9$]
    {\includegraphics[width=0.33\textwidth,
      clip]{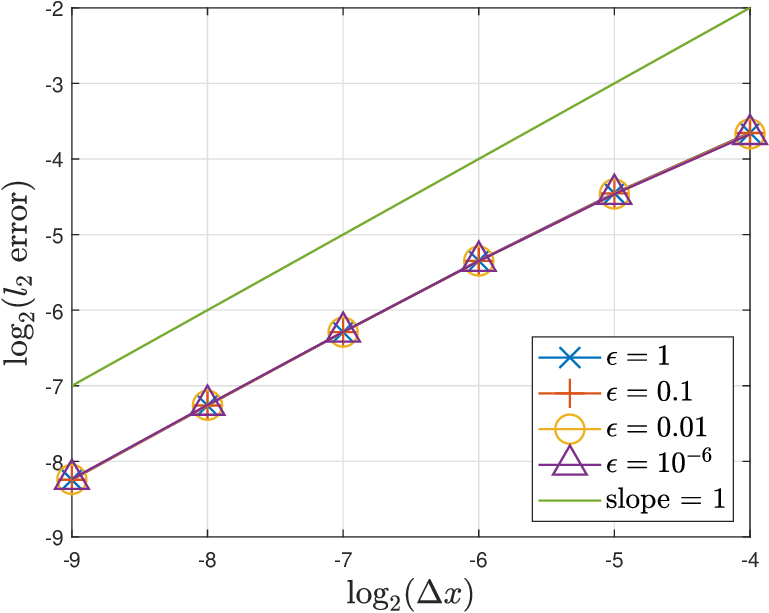}}\hfill
      \subfloat[temperature $T$, $\theta_0=-9$]
    {\includegraphics[width=0.33\textwidth,
      clip]{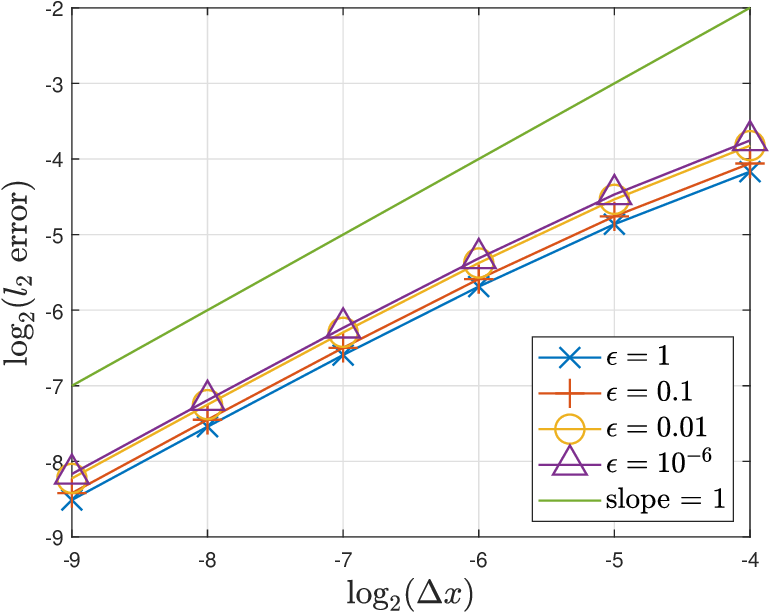}}\hfill
     \subfloat[fugacity $z|\theta_0|$, $\theta_0=-9$]
    {\includegraphics[width=0.33\textwidth,
      clip]{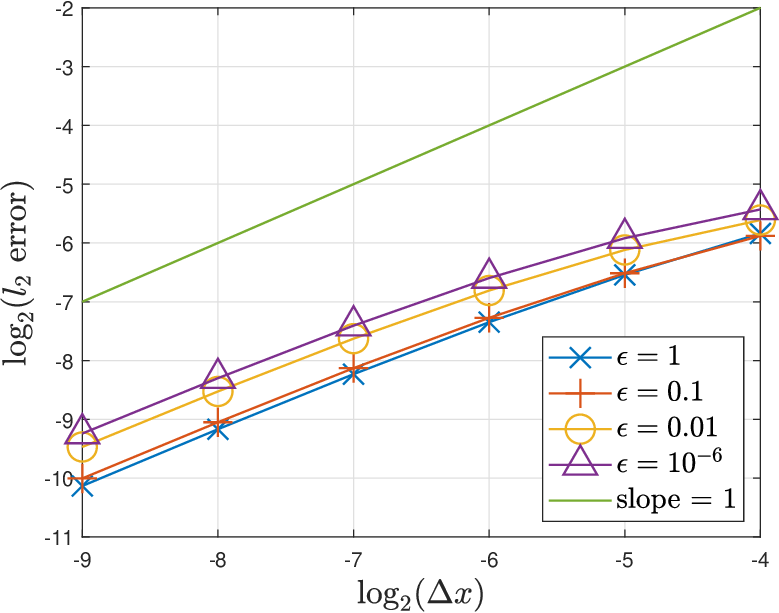}}      
    \caption{(Test of the AP property in Sec. \ref{sec:exp_AP}) The $l_2$ error of the density $\rho$, temperature $T$, and fugacity $z |\theta_0|$ at $t=0.1$ between the numerical solutions with the first-order scheme \eqref{eq:AP_time} and the reference solutions. The first row presents the Fermi gas with $\theta_0 = 9$, and the second row presents the Bose gas with $\theta_0 = -9$. }
    \label{fig:AP1}
\end{figure}

\begin{figure}[!ht]
  % \colorbox{black}
    \centering
    \subfloat[density $\rho$, $\theta_0 = 9$]
    {\includegraphics[width=0.33\textwidth,
      clip]{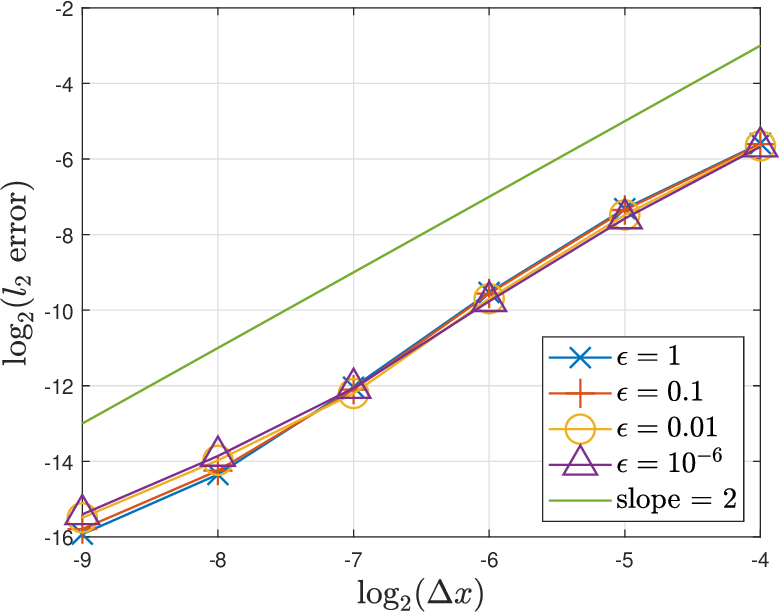}}\hfill
      \subfloat[temperature $T$, $\theta_0 = 9$]
    {\includegraphics[width=0.33\textwidth,
      clip]{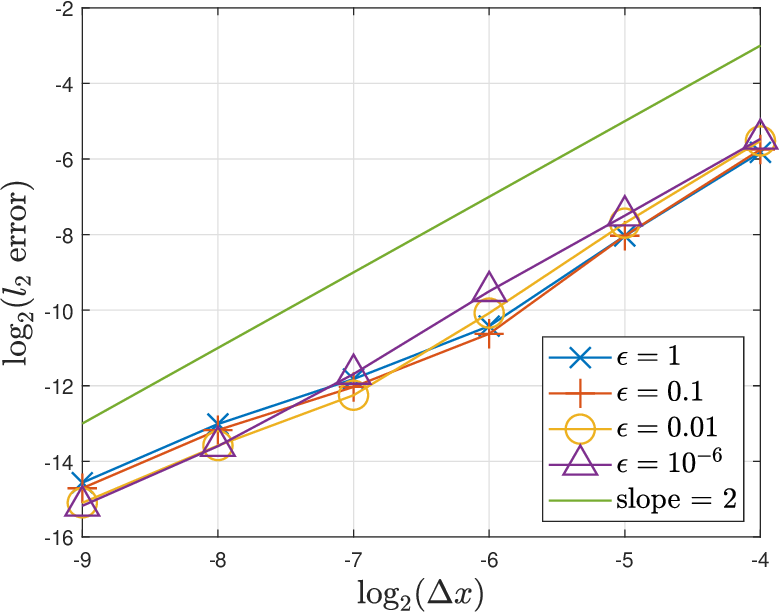}}\hfill
   \subfloat[fugacity $z|\theta_0|$, $\theta_0=9$]
    {\includegraphics[width=0.33\textwidth,
      clip]{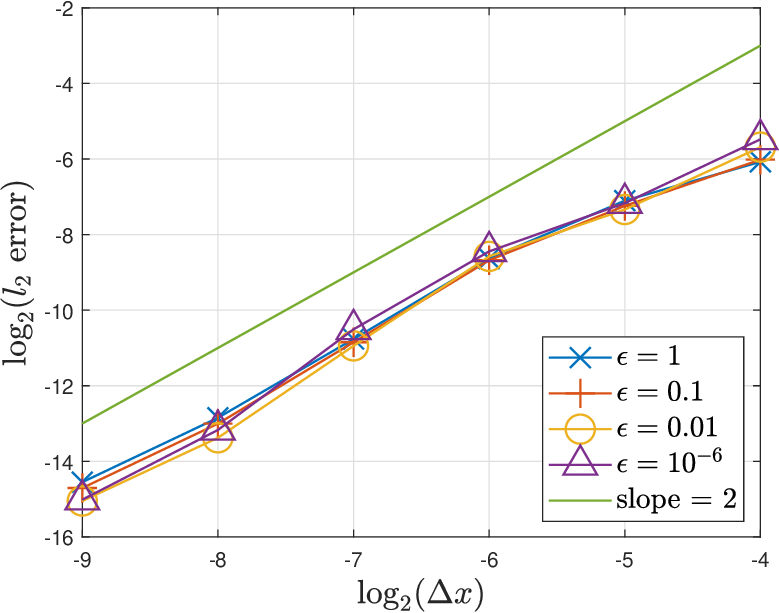}}\\
    \subfloat[density $\rho$, $\theta_0 = -9$]
    {\includegraphics[width=0.33\textwidth,
      clip]{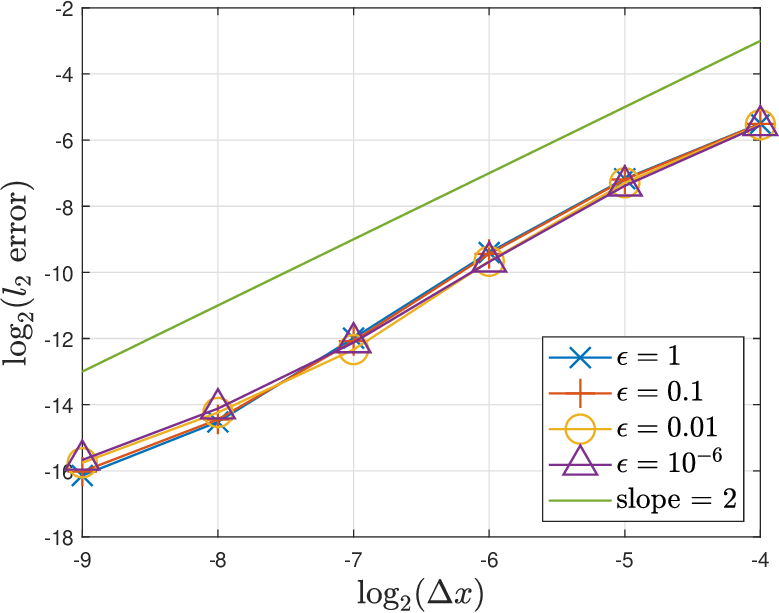}}\hfill
      \subfloat[temperature $T$, $\theta_0=-9$]
    {\includegraphics[width=0.33\textwidth,
      clip]{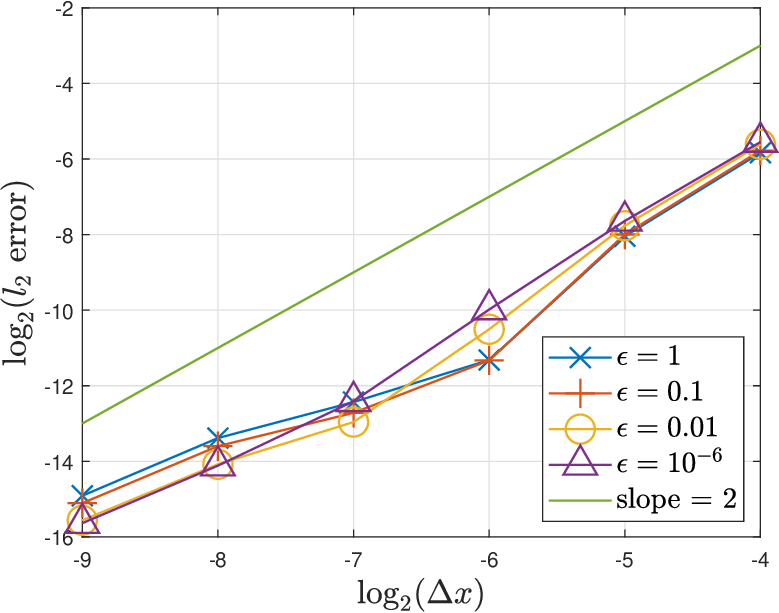}}\hfill
     \subfloat[fugacity $z|\theta_0|$, $\theta_0=-9$]
    {\includegraphics[width=0.33\textwidth,
      clip]{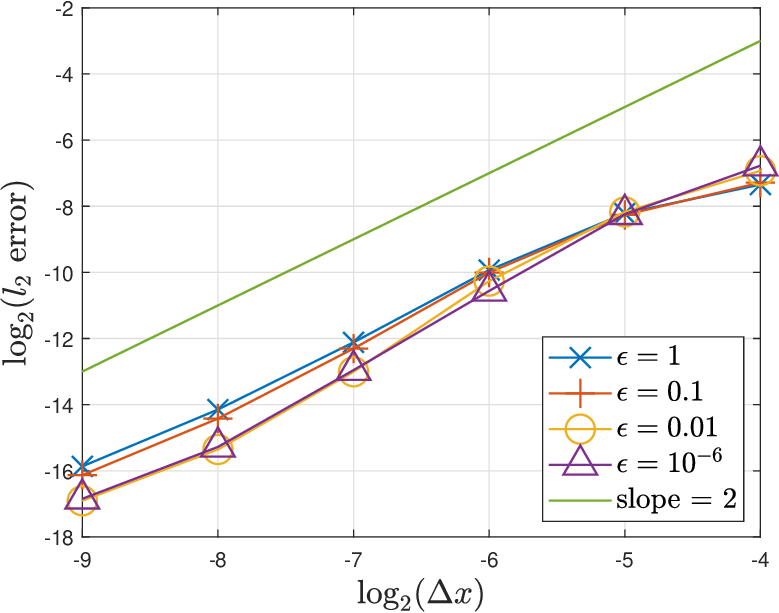}}      
    \caption{(Test of the AP property in  Sec. \ref{sec:exp_AP}) The $l_2$ error of the density $\rho$, temperature $T$, and fugacity $z |\theta_0|$ at $t=0.1$ between the numerical solutions with the IMEX2 scheme \eqref{eq:second} and the reference solutions. The first row presents the Fermi gas with $\theta_0 = 9$, and the second row presents the Bose gas with $\theta_0 = -9$.}
    \label{fig:AP2}
\end{figure}

In this section, the AP property of this Hermite spectral method is assessed. The spatial and microscopic velocity dimensions are set to $1$ and $3$, respectively. The initial condition is the equilibrium with macroscopic variables given by
\begin{equation}
    \label{eq:Ini-AP}
        \rho=\frac{2+\sin(2\pi x)}{3},  \qquad 
        \bu = 0, \qquad
        T=\frac{3+\sin(2\pi x)}{4}.
\end{equation}
The computational domain is $[0, 1]$, with periodic boundary conditions applied in the spatial space. The expansion order is set as $M = 10$, and the CFL number is set as ${\rm CFL} = 0.2$. We perform simulations with grid numbers $N = 16, 32, 64, 128, 256$ and $512$, respectively for Knudsen numbers $\epsilon = 1, 0.1, 0.01$ and $10^{-6}$, and the parameter $\theta_0 = \pm 9$.

The first-order IMEX scheme \eqref{eq:AP_time} is initially tested, with the expansion center set as the spatial average of the macroscopic velocity and temperature, i.e., $[\ou, \oT] = [0, \frac34]$, and no reconstruction is applied in the spatial space. Reference solutions are obtained by the second-order scheme \eqref{eq:second} with the WENO reconstruction, utilizing a grid size of $N = 1024$. The computation time is $t = 0.1$. The $l_2$ error of the density $\rho$, temperature $T$, and fugacity $z|\theta_0|$ between numerical solutions and reference solutions for $\theta_0 = \pm 9$ is presented in Fig. \ref{fig:AP1}. The results indicate that for both Bose and Fermi gas with different Knudsen numbers, the error uniformly converges with first-order, confirming the AP property of this first-order scheme \eqref{eq:AP_time}.

Next, we investigate the AP property of the IMEX2 scheme \eqref{eq:second}, utilizing the same computational parameters as the first-order scheme. The WENO reconstruction is utilized in the spatial space. The identical reference solutions as in the previous test are employed. The $l_2$ error of the density $\rho$, temperature $T$, and the fugacity $z|\theta_0|$ is displayed in Fig. \ref{fig:AP2}, illustrating that the error uniformly converges with second-order for both $\theta_0$ and different Knudsen numbers. This demonstrates the AP property of the IMEX2 scheme \eqref{eq:second}.

\subsection{Sod problem}
\label{sec:sod}
In this section, we examine the spatially 1D and microscopically 2D quantum Sod problem, which has also been investigated in previous studies \cite{Filbet2012, Hu2015}. The identical initial condition as in \cite[Sec. 5.1]{Hu2015} is used here: 
\begin{equation}
    \label{eq:Ini-Sod}
    \left\{
    \begin{array}{ll}
        \rho_l=1,\quad \bu_l=\bz, \quad T_l=1, & 0<x<0.5;  \\
        \rho_r=0.125,\quad \bu_r=\bz, \quad T_r=0.25, & 0.5\leqslant x<1. 
    \end{array}
    \right.
\end{equation}
Given the discontinuity in the initial condition, this problem poses a significant challenge in numerical computations.

First, we set the Knudsen number as $\epsilon = 0.01$, and $\theta_0=\pm 9, -0.01$, which are the same parameters as in \cite[Sec. 5.1]{Hu2015}. The computational region is $[0, 1]$. The IMEX2 scheme \eqref{eq:second} with the linear reconstruction is employed. To handle the discontinuity in the initial condition, the Minmod limiter is applied in the reconstruction. The CFL number in \eqref{eq:CFL} is set to ${\rm CFL} = 0.3$, and the mesh size in the spatial space is chosen as $N = 256$. The expansion order and center are selected as $M = 15$ and $[\ou, \oT]=[\bz, 1]$, respectively. Numerical solutions for the density $\rho$, the internal energy $e_0$ and the fugacity $z|\theta_0|$ at $t = 0.2$ by this Hermite spectral method (HSM) are provided in Fig. \ref{fig:Sod1}, where the reference solutions are obtained from solving the full quantum Boltzmann equation \eqref{eq:Boltz} with Exp-RK2 method in \cite[Fig. 3]{Hu2015}. It is evident that the numerical solutions closely match the reference solutions, indicating that the quantum BGK model \eqref{eq:BGK} serves as a good approximation of the full quantum collision operator \eqref{eq:operator}. Notably, this approximation allows for significantly reduced computational costs.

\begin{figure}[!ht]
  % \colorbox{black}
    \centering
    \subfloat[density $\rho$, $\theta_0=-0.01$]
    {\includegraphics[width=0.33\textwidth, clip]{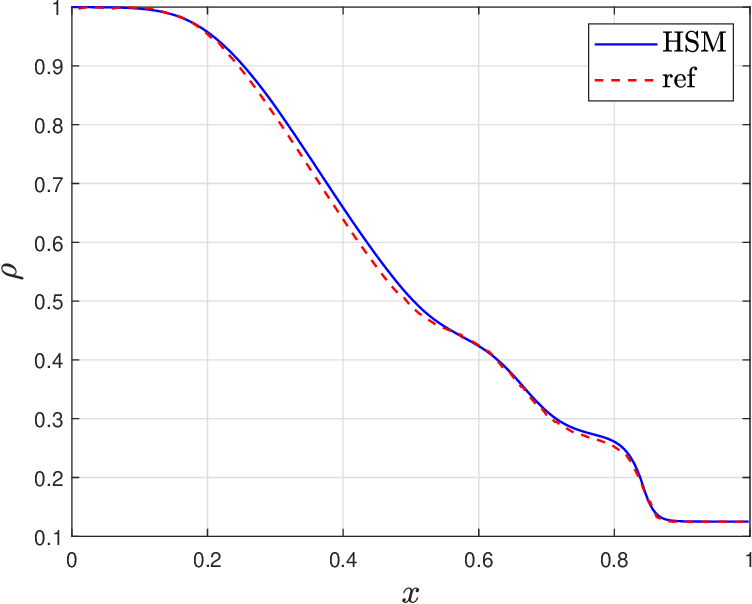}}\hfill
    \subfloat[density $\rho$, $\theta_0=9$]
    {\includegraphics[width=0.33\textwidth, clip]{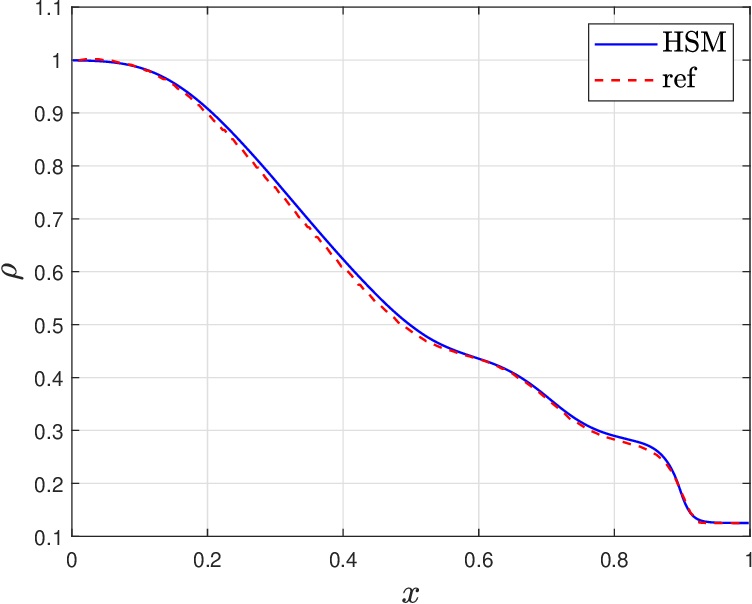}}\hfill
    \subfloat[density $\rho$, $\theta_0=-9$]
    {\includegraphics[width=0.33\textwidth, clip]{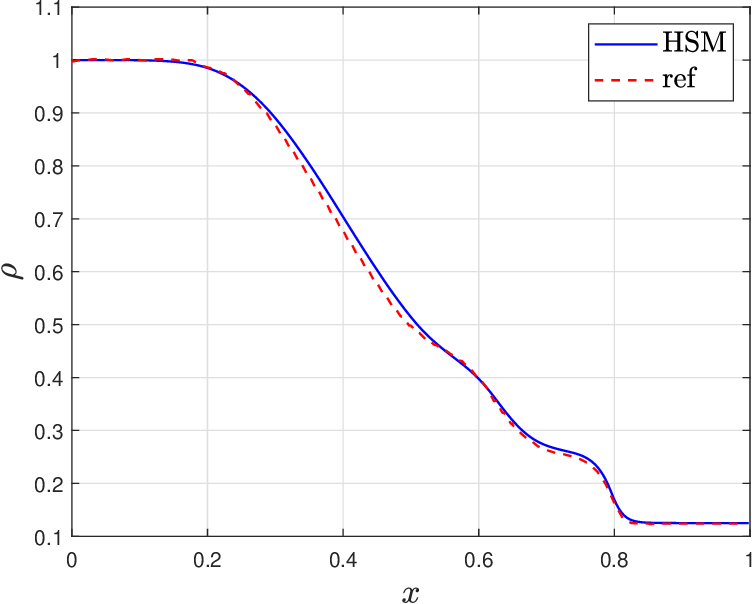}}\\
    \subfloat[internal energy $e_0$, $\theta_0=-0.01$]
    {\includegraphics[width=0.33\textwidth, clip]{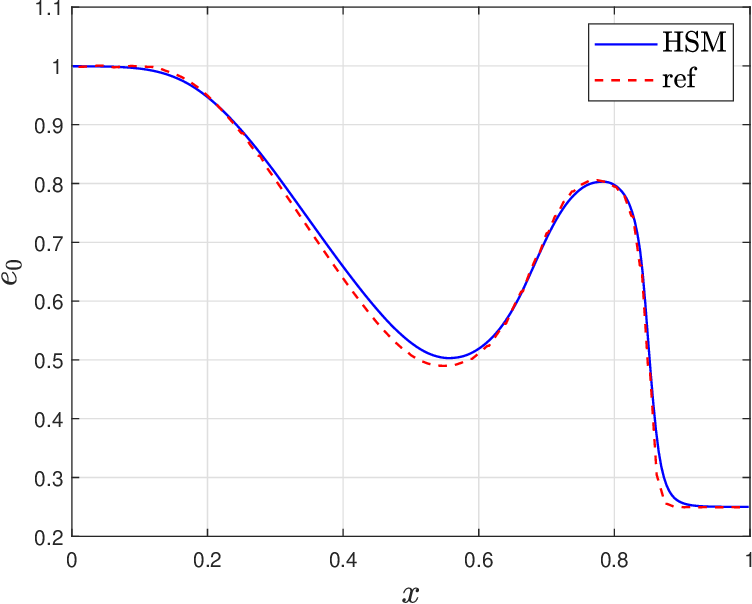}}\hfill
    \subfloat[internal energy $e_0$, $\theta_0=9$]
    {\includegraphics[width=0.33\textwidth, clip]{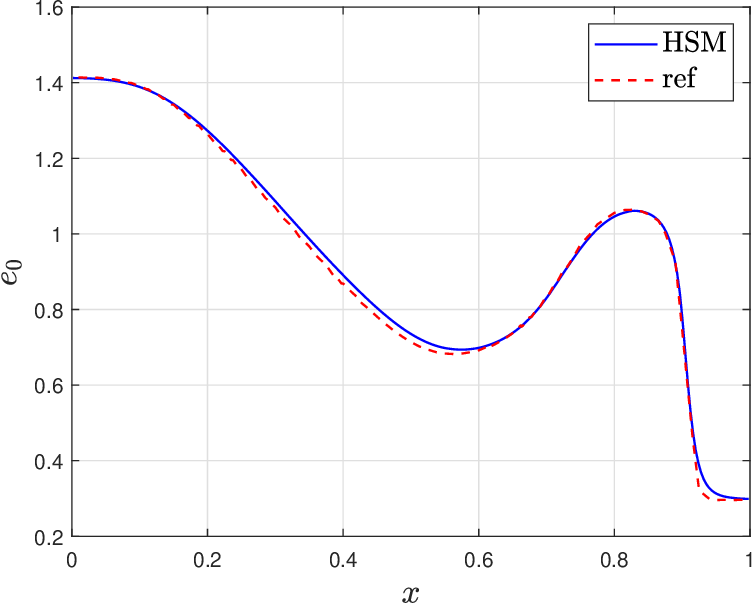}}\hfill
    \subfloat[internal energy $e_0$, $\theta_0=-9$]
    {\includegraphics[width=0.33\textwidth, clip]{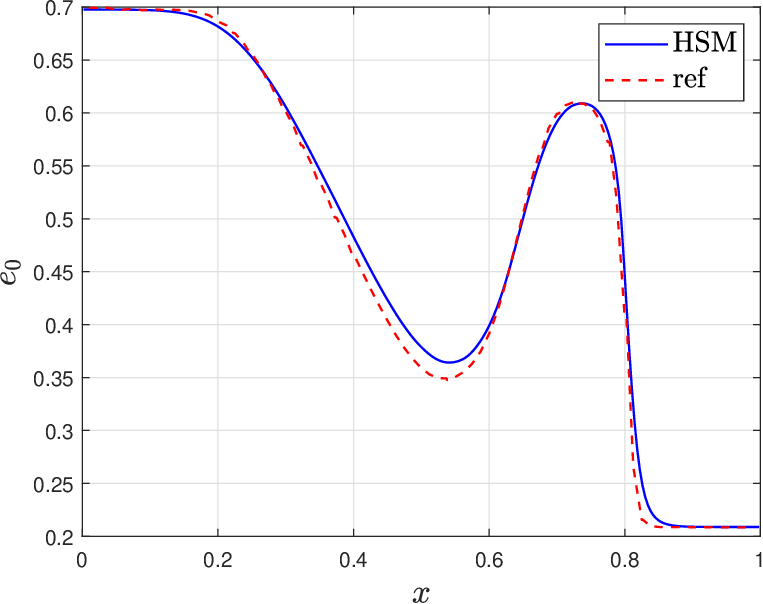}}\\
    \subfloat[fugacity $z|\theta_0|$, $\theta_0=-0.01$]
    {\includegraphics[width=0.33\textwidth, clip]{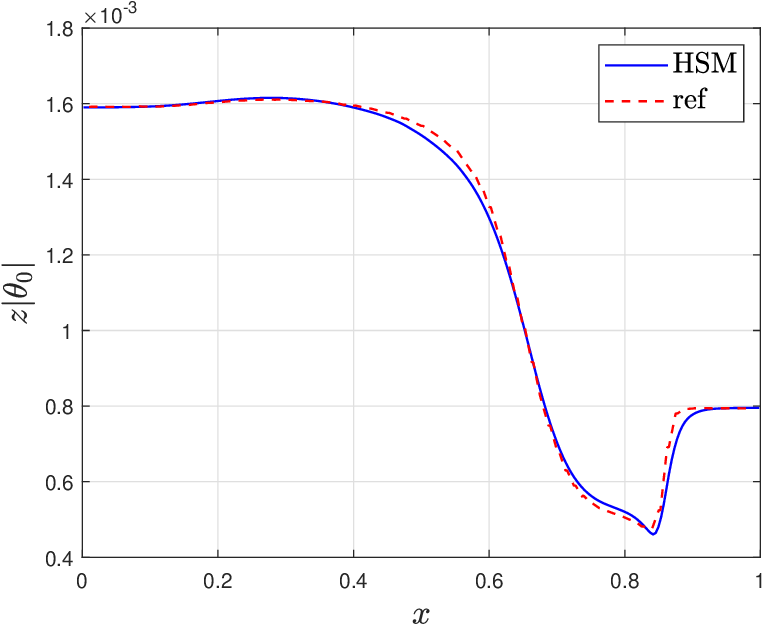}}\hfill
    \subfloat[fugacity $z|\theta_0|$, $\theta_0=9$]
    {\includegraphics[width=0.33\textwidth, clip]{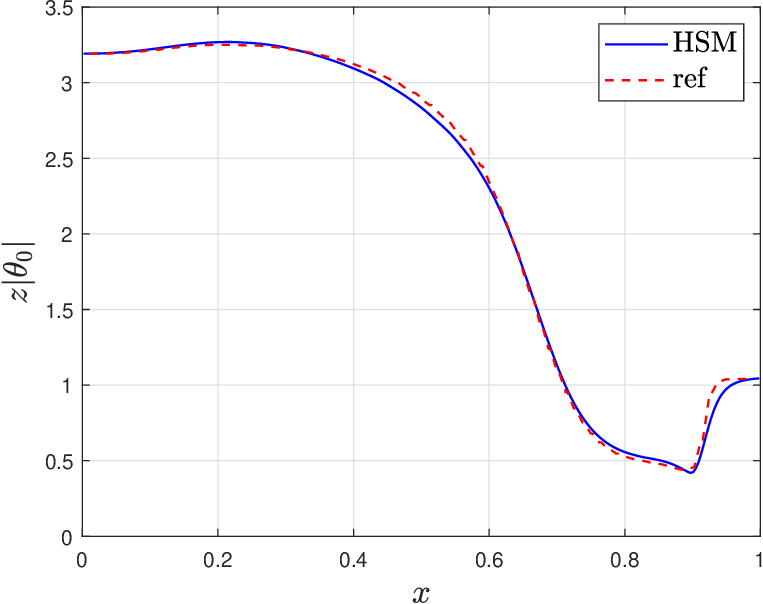}}\hfill
    \subfloat[fugacity $z|\theta_0|$, $\theta_0=-9$]
    {\includegraphics[width=0.33\textwidth, clip]{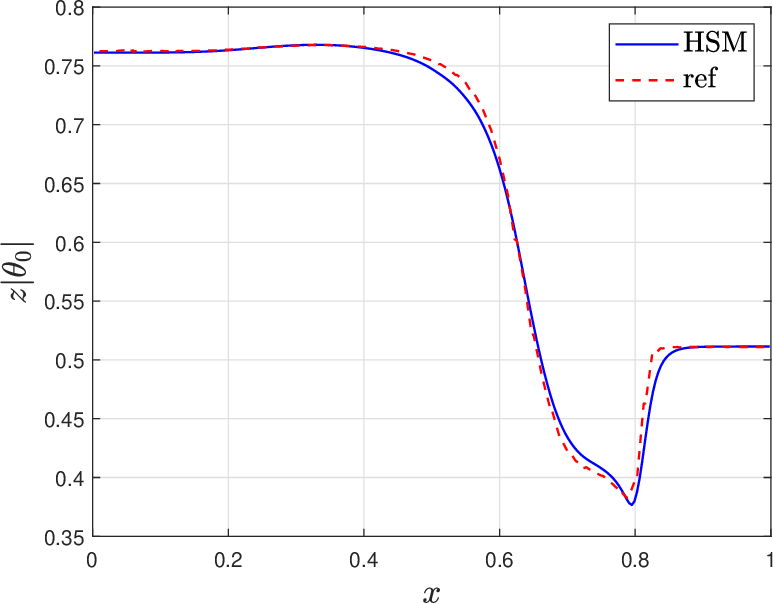}}\hfill
    \caption{(The Sod problem in Sec. \ref{sec:sod}) The numerical solutions of the density $\rho$, internal energy $e_0$, and fugacity $z|\theta_0|$ at $t = 0.2$ for $\epsilon = 0.01$. The blue lines represent the numerical solutions obtained using the Hermite spectral method (HSM), while the red dashed lines represent the reference solutions from \cite[Fig. 3]{Hu2015}.  The three columns present the near classical regime $\theta_0 = -0.01$, the Fermi gas $\theta_0 = 9$ in the quantum regime, and the Bose gas $\theta_0 = -9$ in the quantum regime, respectively.}    
    \label{fig:Sod1}
\end{figure}

To further validate the computational capability of this Hermite spectral method, we increase the Knudsen number to $\epsilon = 1$, which introduces greater challenges associated with rarefaction. The computational domain is extended to $[-0.5, 1.5]$. This simulation uses the same IMEX2 scheme \eqref{eq:second} and linear reconstruction as the previous test. We increase the expansion order to $M=30$, modify the CFL number to ${\rm CFL}=0.2$, and retain the expansion center at $[\ou, \oT] = [\bz, 1]$. The mesh size remains as $N = 256$. Numerical solutions of the density $\rho$, the internal energy $e_0$ and the fugacity $z|\theta_0|$ for $\theta = \pm 9$ and $-0.01$ at $t = 0.2$ are displayed in Fig. \ref{fig:Sod2}, along with the reference solutions obtained from the discrete velocity method (DVM). The excellent agreement between the numerical solutions and the reference solutions verifies the capability of this Hermite spectral method to accurately describe rarefied scenarios. 

\begin{figure}[!ht]
  % \colorbox{black}A
    \centering
    \subfloat[density $\rho$, $\theta_0=-0.01$]
    {\includegraphics[width=0.33\textwidth, clip]{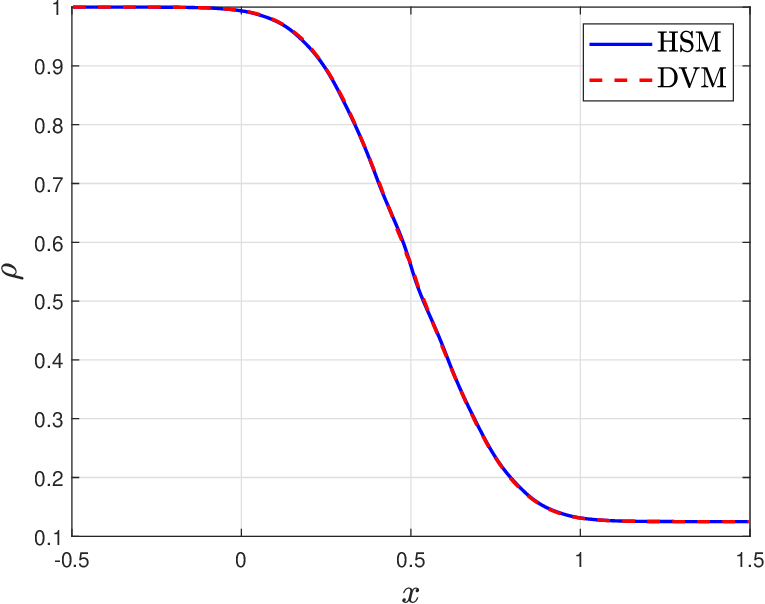}}\hfill
    \subfloat[density $\rho$, $\theta_0=9$]
    {\includegraphics[width=0.33\textwidth, clip]{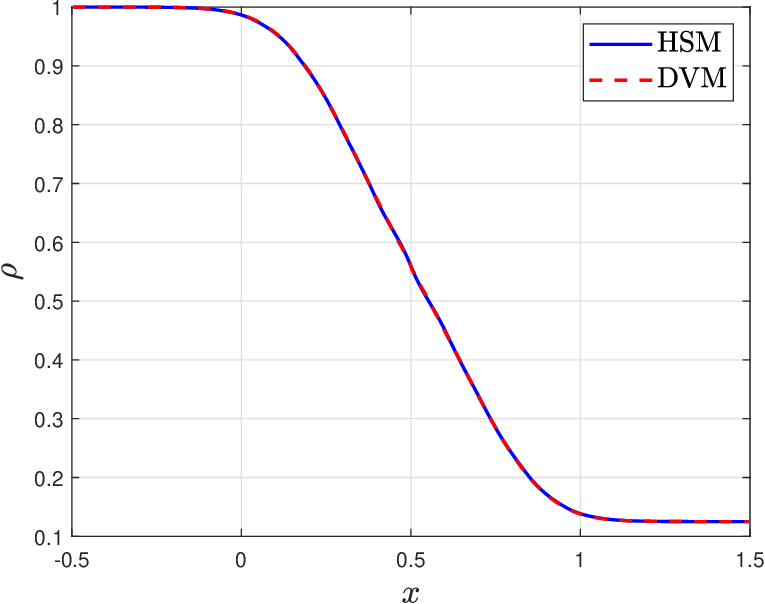}}\hfill
    \subfloat[density $\rho$, $\theta_0=-9$]
    {\includegraphics[width=0.33\textwidth, clip]{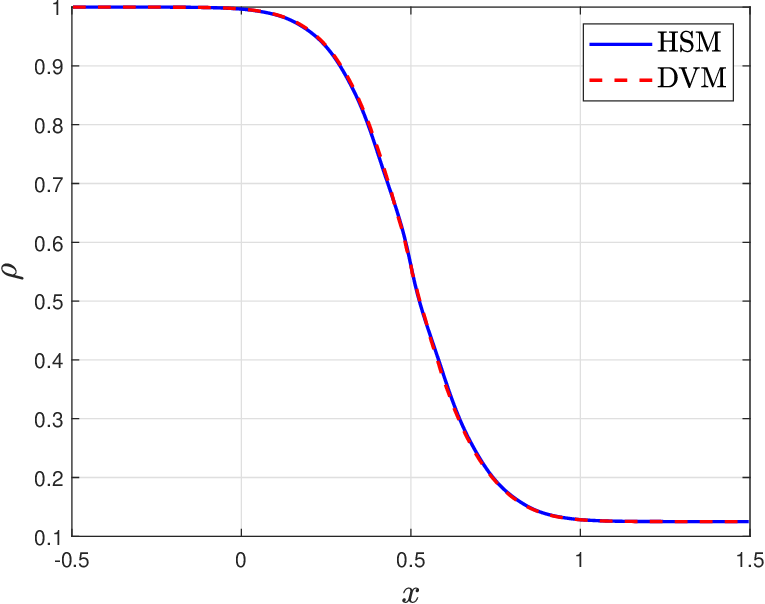}}\\
    \subfloat[internal energy $e_0$, $\theta_0=-0.01$]
    {\includegraphics[width=0.33\textwidth, clip]{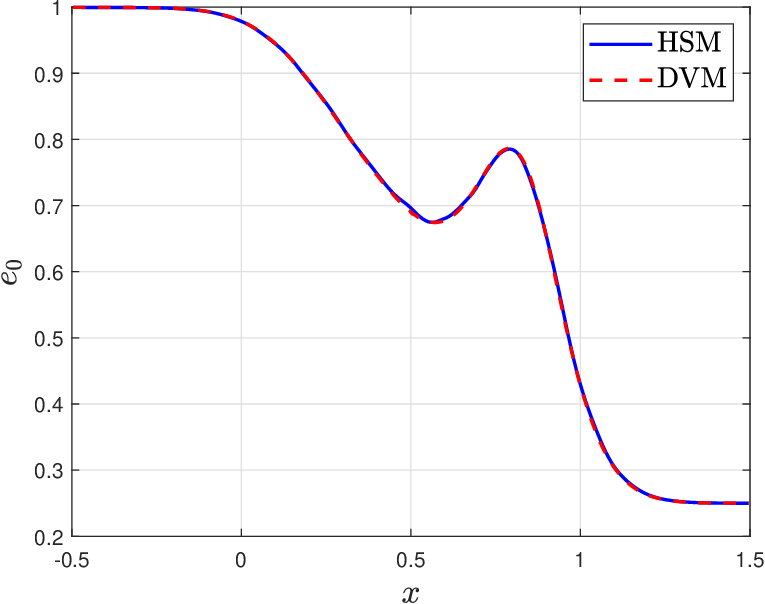}}\hfill
    \subfloat[internal energy $e_0$, $\theta_0=9$]
    {\includegraphics[width=0.33\textwidth, clip]{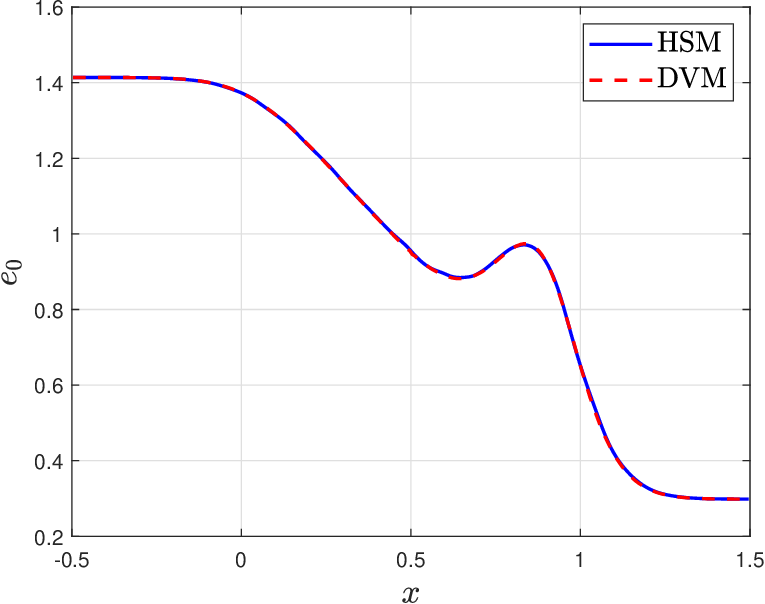}}\hfill
    \subfloat[internal energy $e_0$, $\theta_0=-9$]
    {\includegraphics[width=0.33\textwidth, clip]{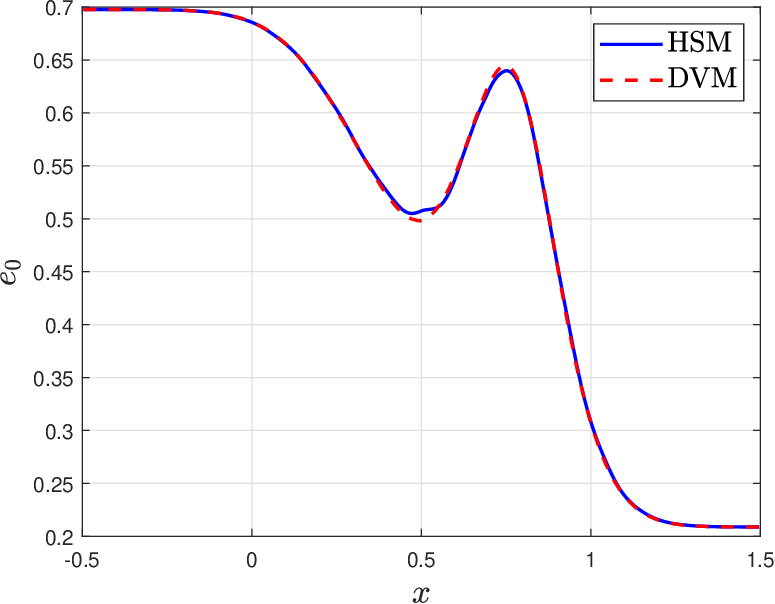}}\\
    \subfloat[fugacity $z|\theta_0|$, $\theta_0=-0.01$]
    {\includegraphics[width=0.33\textwidth, clip]{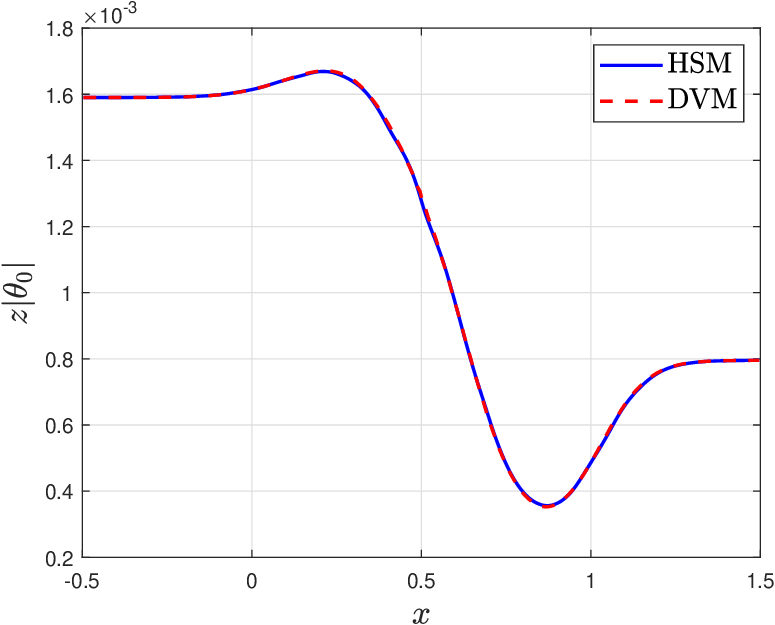}}\hfill
    \subfloat[fugacity $z|\theta_0|$, $\theta_0=9$]
    {\includegraphics[width=0.33\textwidth, clip]{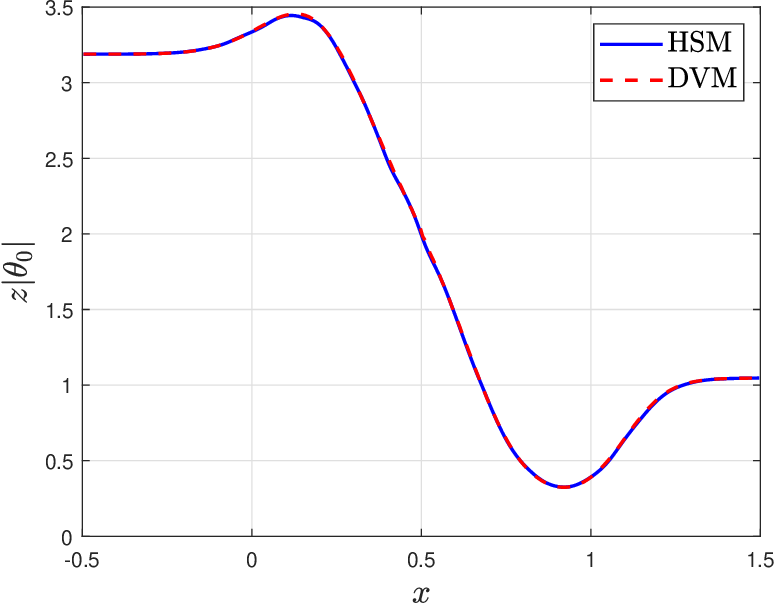}}\hfill
    \subfloat[fugacity $z|\theta_0|$, $\theta_0=-9$]
    {\includegraphics[width=0.33\textwidth, clip]{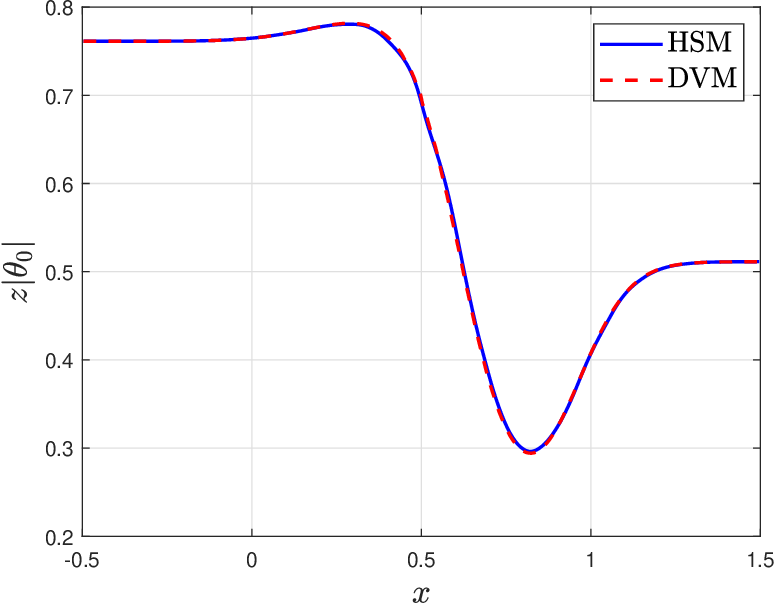}}\hfill
    \caption{(The Sod problem in Sec. \ref{sec:sod}) The numerical solutions of the density $\rho$, internal energy $e_0$, and fugacity $z|\theta_0|$ at $t = 0.2$ for $\epsilon = 1$. The blue lines represent the numerical solutions obtained using the Hermite spectral method (HSM), while the red dashed lines represent the reference solutions from the DVM. The three columns present the near classical regime $\theta_0 = -0.01$, the Fermi gas $\theta_0 = 9$ in the quantum regime, and the Bose gas $\theta_0 = -9$ in the quantum regime, respectively. }
    \label{fig:Sod2}
\end{figure}

\subsection{Mixing regime problem}
\label{sec:mix}
In this section, we address the spatially 1D and microscopically 3D problem with the mixing regime. Similar simulations have also been tested in \cite{Hu2015, Hu2012qFPL}. The initial condition 
\begin{equation}
    \label{eq:ini_Mix}
    \rho=1+\frac12 \sin(2\pi x),\qquad \bu=0, \qquad T=1+\frac14\sin(2\pi x)
\end{equation}
is utilized in this test. The Knudsen number, varying across kinetic and fluid regimes, is expressed as
\begin{equation}
    \label{eq:mix_Kn}
    \epsilon(x)=\epsilon_0+0.005(\exp(3x)-1), \qquad \epsilon_0 = 0.001,
\end{equation}
with the profile depicted in Fig. \ref{fig:Mix_eps}.

\begin{figure}[!ht]
    \centering
    \includegraphics[width=0.35\textwidth]{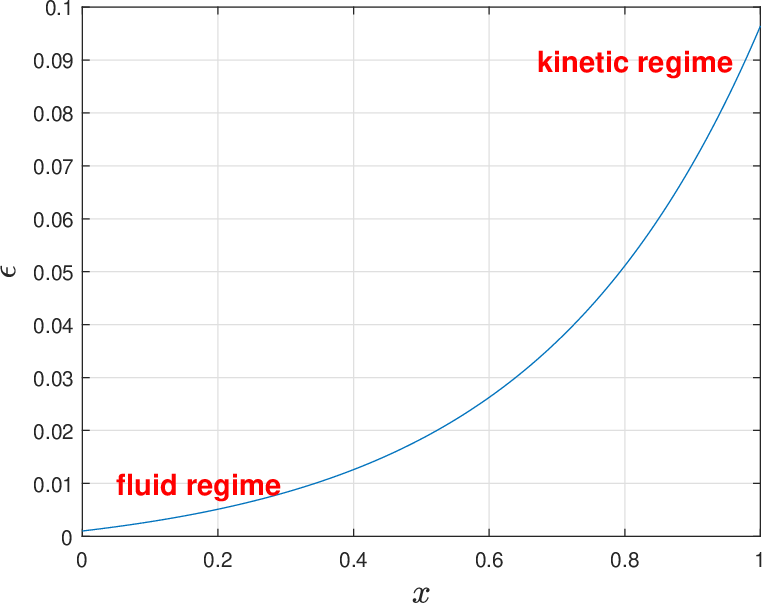}
    \caption{(Mixing regime problem in Sec. \ref{sec:mix}) The profile of the Knudsen number $\epsilon(x)$.}
    \label{fig:Mix_eps}
\end{figure}

\begin{figure}[!ht]
  % \colorbox{black}
    \centering
    \subfloat[density $\rho$]
    {\includegraphics[width=0.33\textwidth, height=0.26\textwidth]{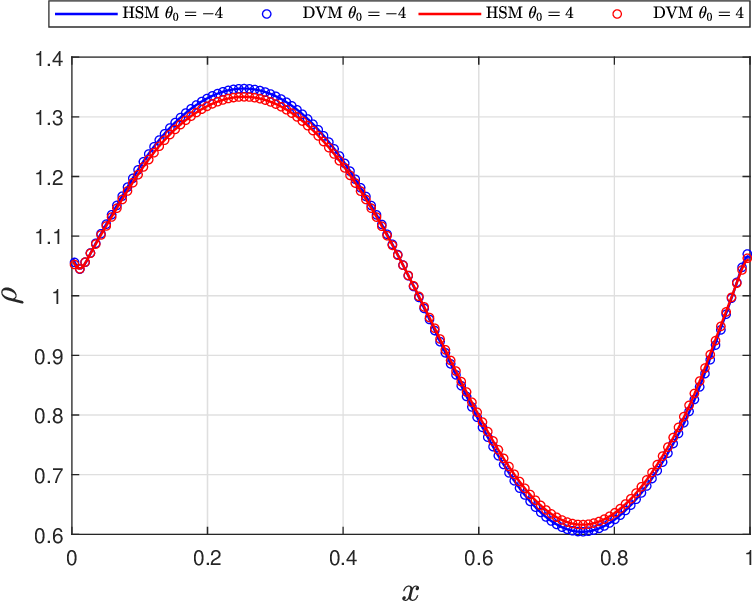}}\hfill
        \subfloat[$x$-direction velocity $u_1$]
    {\includegraphics[width=0.33\textwidth, height=0.26\textwidth]{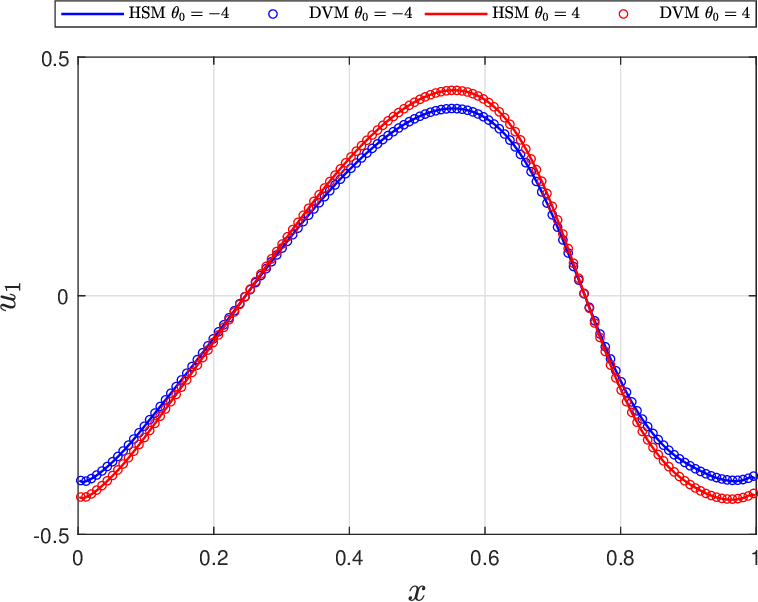}}\hfill
    \subfloat[internal energy $e_0$]
    {\includegraphics[width=0.33\textwidth, height=0.26\textwidth]{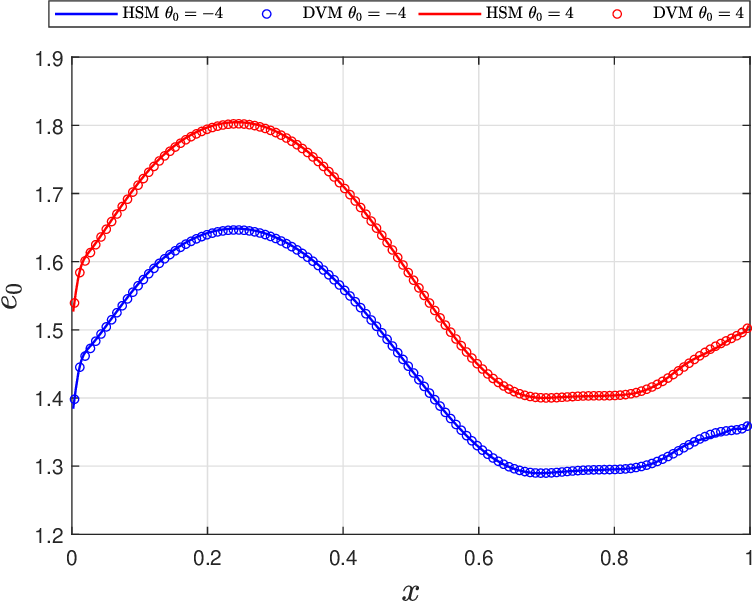}}\\
    \subfloat[fugacity $z|\theta_0|$]
    {\includegraphics[width=0.33\textwidth, height=0.26\textwidth]{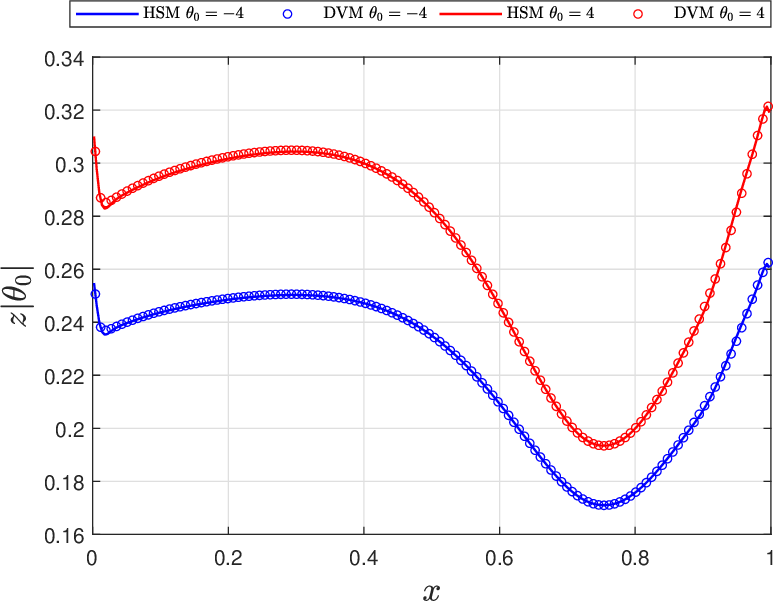}}\hfill  
      \subfloat[shear stress $p_{11}$]
    {\includegraphics[width=0.33\textwidth, height=0.26\textwidth]{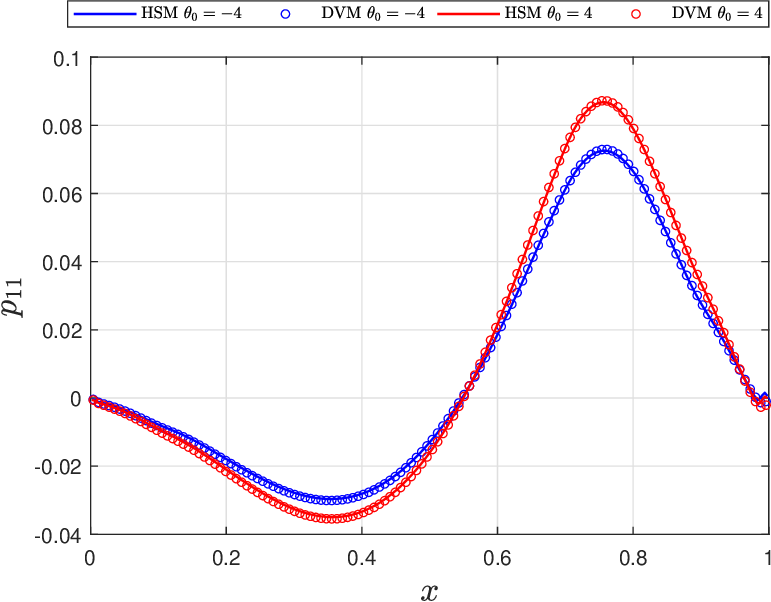}}\hfill 
    \subfloat[heat flux $q_1$]
    {\includegraphics[width=0.33\textwidth, height=0.26\textwidth]{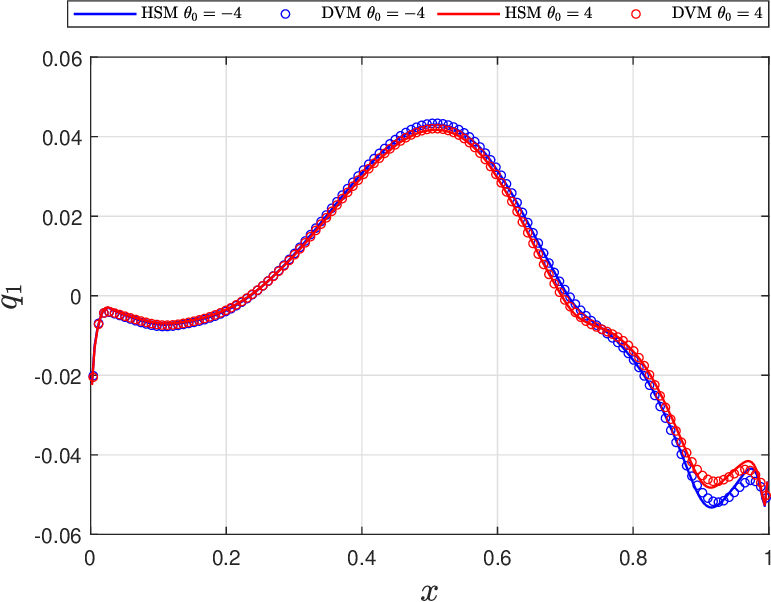}}
    \caption{(Mixing regime problem in Sec. \ref{sec:mix}) The numerical solutions of the mixing regime problem at $t=0.1$ for $\theta_0=\pm4$. The solid lines depict the numerical solutions, and the dots depict the reference solutions by DVM. The blue color represents those for $\theta_0 = -4$, and red color represents those for $\theta_0 = 4$. }
    %             (a) Density $\rho$; 
    % (b) $x$-direction velocity $u_1$; (c) internal energy $e_0$; (d) fugacity $z|\theta_0|$; (e) shear stress $p_{11}$; (f) heat flux $q_1$.}
    \label{fig:Mix_DVM}
\end{figure}
In the simulation, periodic boundary conditions are employed, with the CFL number set to ${\rm CFL} = 0.3$. The expansion order and the grid size are chosen to be $M = 10$ and $N = 256$, respectively. The IMEX2 scheme \eqref{eq:second} is adopted with the WENO reconstruction. Besides, the expansion center is set to $[\ou, \oT]=[\bz, 1]$.

For $\theta_0 = \pm 4$, numerical solutions for the density $\rho$, macroscopic velocity in the $x$-direction $u_1$, internal energy $e_0$, fugacity $z|\theta_0|$, shear stress in the $x$-direction $p_{11}$, and heat flux in the $x$-direction $q_1$ at $t = 0.1$ are displayed in Fig. \ref{fig:Mix_DVM}. Reference solutions, computed using DVM, are also included for comparison. The numerical solutions exhibit good agreement with the reference solutions for both Bose and Fermi gases. Notably, due to the non-periodic nature of $\epsilon(x)$ in the spatial space, oscillations arise near the boundary in the numerical solutions, as observed in Fig. \ref{fig:Mix_DVM}, particularly for fugacity $z|\theta_0|$ and heat flux $q_1$. These oscillations are also well-captured by this Hermite spectral method.

To compare the efficiency of this Hermite spectral method (HSM) with DVM, the running time of these two methods is summarized in Tab. \ref{table:Mix_time}. All simulations are conducted on the CPU model Intel Xeon E5-2697A V4 @ 2.6GHz with 8 threads. For the DVM, the velocity space is discretized within $[-10, 10]$ by $80$ points in the $x-$direction and within $[-5, 5]$ by 20 points in the other two directions. Temporal and spatial discretizations in the DVM are kept consistent with the Hermite spectral method. Tab. \ref{table:Mix_time} reveals that for both $\theta_0$, the computational cost of HSM is significantly lower than that of DVM, which demonstrates the high efficiency of this Hermite spectral method. 

\begin{table}[!hptb]
\centering
\def\arraystretch{1.5}
{\footnotesize
\begin{tabular}{l|llll}
\hline
$\theta_0$ &  4  & -4        & 4       & -4        \\
\hline
Method & HSM & HSM & DVM & DVM \\
CFL number & 0.3 & 0.3 & 0.49 & 0.49 \\
time step & $2.26\times 10^{-4}$ & $2.26\times 10^{-4}$ & $1.81\times 10^{-4}$ & $1.81\times 10^{-4}$ \\
Grid number & 256 & 256 & 256 & 256 \\
\hline
%Run-time data:  &  &  &  & \\
Total CPU time $T_{\rm CPU}$ (s): & 281 & 287 & 3849 & 3393 \\ 
Elapsed time (Wall time) $T_{\rm Wall}$ (s): & 79.45 & 76.43 & 741.60 & 639.66\\
CPU time per time step (s): & 0.634 & 0.648 & 7.360 & 6.488 \\
CPU time per grid (s) & $2.48\times10^{-3}$ & $2.53\times10^{-3}$ & $2.88\times10^{-2}$ & $2.53\times10^{-2}$ \\
\hline
\end{tabular}
}
\caption{(Mixing regime problem in Sec. \ref{sec:mix}) Run-time data of the mixing regime problem for the Hermite spectral method and DVM.}
\label{table:Mix_time}
\end{table}

To investigate how the parameter $\theta_0$ influences the numerical solutions, we compute results for $\theta_0=-10, -4, -1, 0, 1, 4, 10$, respectively, while keeping other parameters consistent with the previous test. The numerical solutions of the macroscopic variables are illustrated in Fig. \ref{fig:Mix_theta}. It can be observed that for different $\theta_0$, the numerical solutions behave differently. When $|\theta_0|$ gets larger, the distinction between numerical solutions and the classical case (i.e. $\theta_0 = 0$) becomes more evident. Furthermore, it can be inferred that the numerical solutions exhibit continuous variation with $\theta_0$.

% Moreover, the error of the numerical solution for both the Bose and Fermi gases with the same $|\theta_0|$ and that for the classical Boltzmann equation 
% \begin{equation}
%     \label{eq:mix_error}
%     \mathcal{E}(\theta_0)(S) = S(\theta_0) - S(0), \qquad S = \rho, u_1, e_0, z|\theta_0|
% \end{equation}
% is shown in Fig. \ref{Mix_error}. Fig. \ref{fig:Mix_error} shows that for all the macroscopic variables, the error for the Bose and Fermi gases with the same $|\theta_0|$ are almost the same, which means that the numerical solution for the Bose and Fermi gases with the same $\theta_0$ are distributed symmetrically with the classical Boltzmann equation. 

\begin{figure}[!ht]
  % \colorbox{black}
    \centering
    \subfloat[density $\rho$]
    {\includegraphics[width=0.33\textwidth, clip]{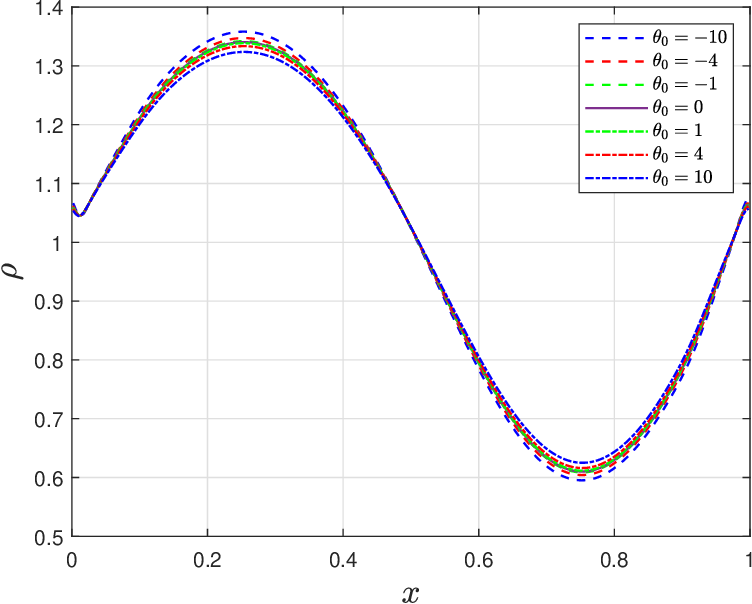}}\hfill
        \subfloat[$x$-direction velocity $u_1$]
    {\includegraphics[width=0.33\textwidth, clip]{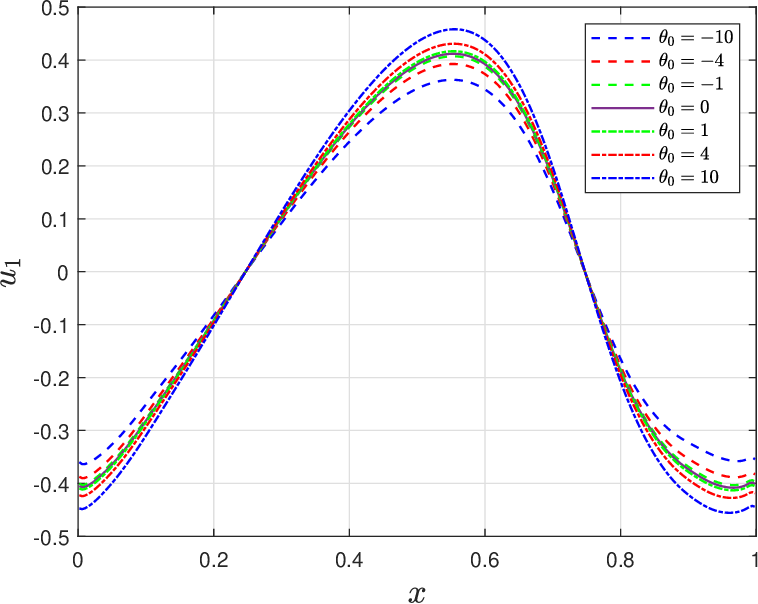}}\hfill
    \subfloat[internal energy $e_0$]
    {\includegraphics[width=0.33\textwidth, clip]{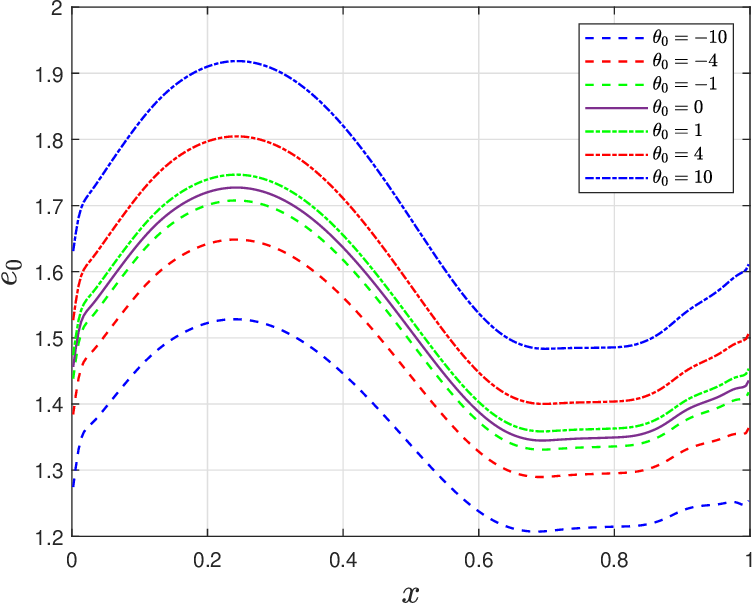}}\\
     \subfloat[fugacity $z|\theta_0|$]
    {\includegraphics[width=0.33\textwidth, clip]{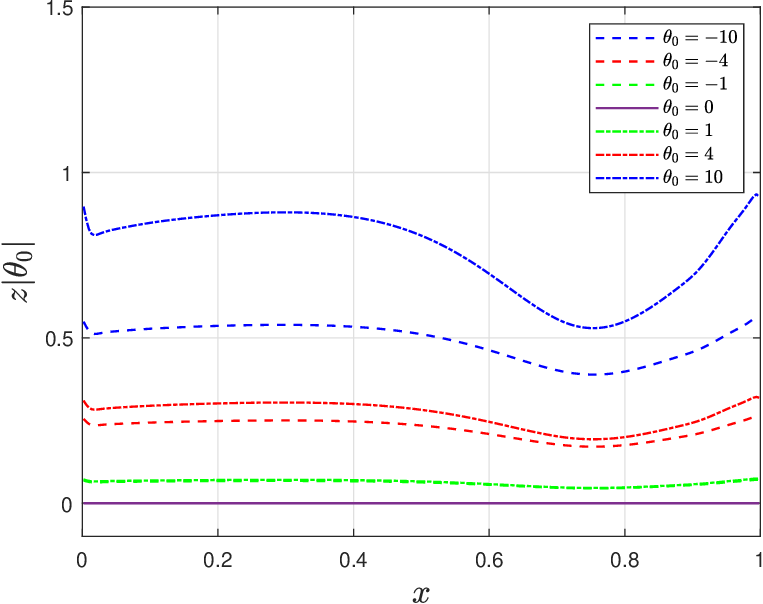}}\hfill 
      \subfloat[shear stress $p_{11}$]
    {\includegraphics[width=0.33\textwidth, clip]{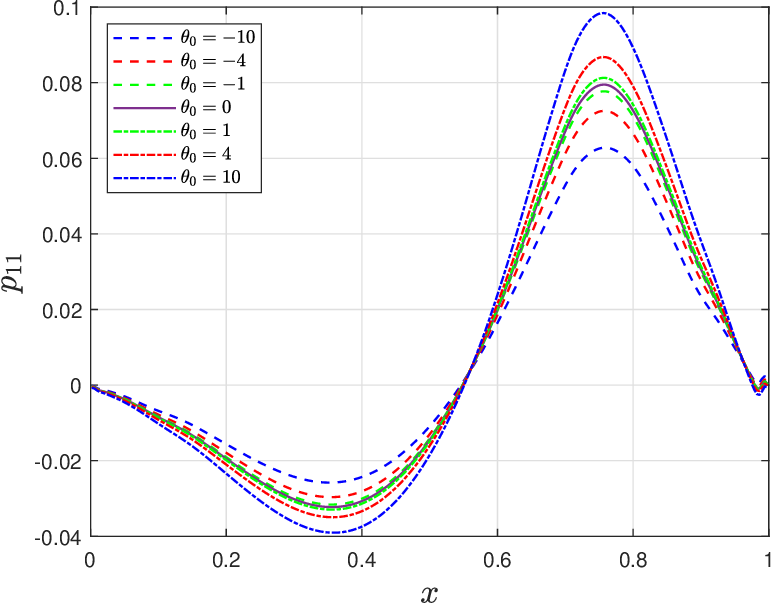}}\hfill 
    \subfloat[heat flux $q_1$]
    {\includegraphics[width=0.33\textwidth, clip]{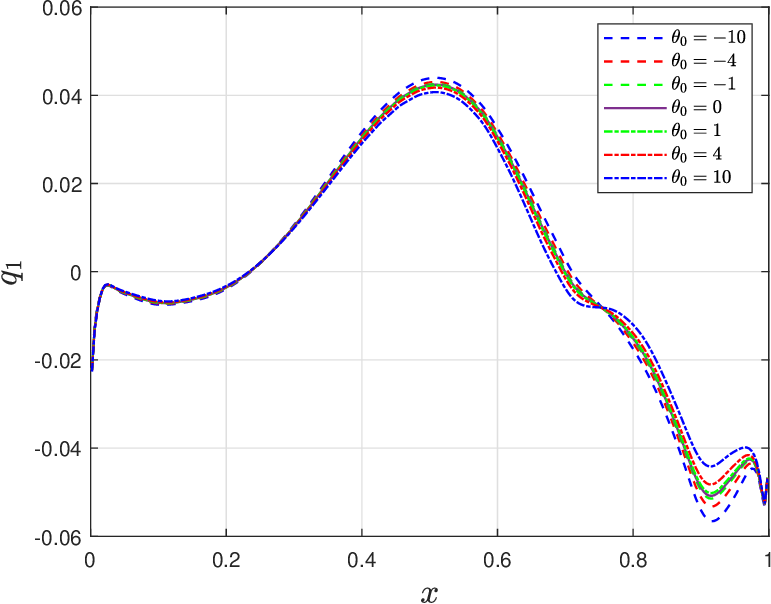}}
    \caption{(Mixing regime problem in Sec. \ref{sec:mix}) The numerical solutions of the mixing regime problem at $t=0.1$ for different $\theta_0$. }%(a) Density $\rho$; 
    %(b) $x$-direction velocity $u_1$; (c) internal energy $e_0$;  (d) fugacity $z|\theta_0|$; (e) shear stress $p_{11}$; (f) heat flux $q_1$.}
    \label{fig:Mix_theta}
\end{figure}

\subsection{2D lid-driven cavity flow}
\label{sec:cavity}
\begin{figure}[!ht]
  % \colorbox{black}
    \centering
    \subfloat[fugacity $z|\theta_0|$, $\theta_0=4$]
    {\includegraphics[width=0.33\textwidth, clip]{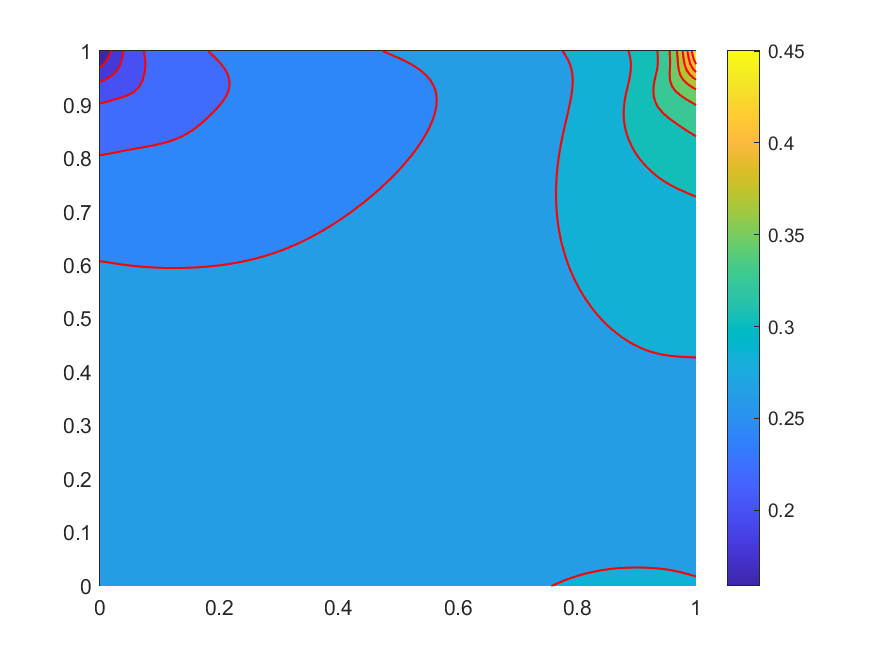}}\hfill
    \subfloat[fugacity $z|\theta_0|$, $\theta_0=0.01$]
    {\includegraphics[width=0.33\textwidth, clip]{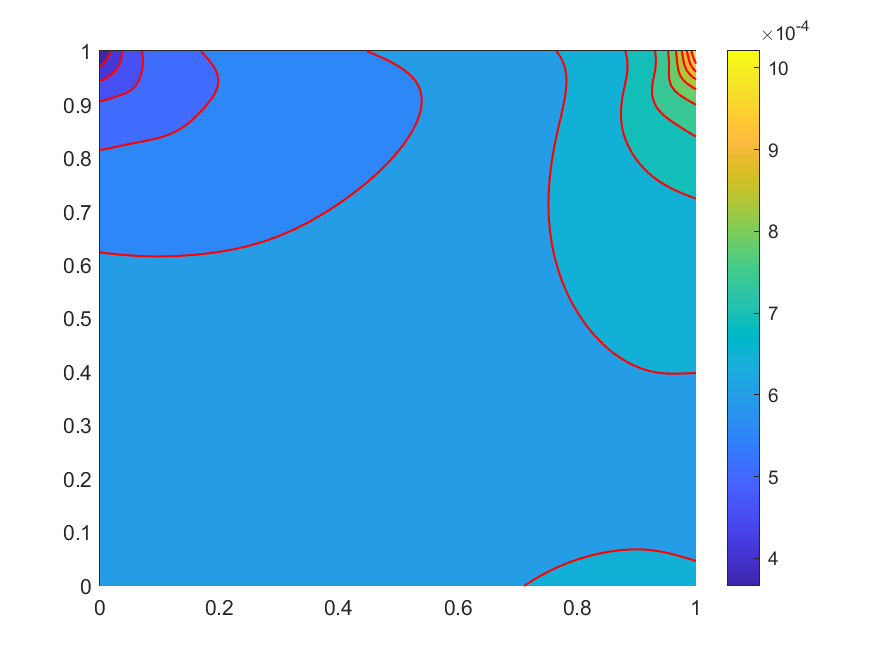}}\hfill
    \subfloat[fugacity $z|\theta_0|$, $\theta_0=-4$]
    {\includegraphics[width=0.33\textwidth, clip]{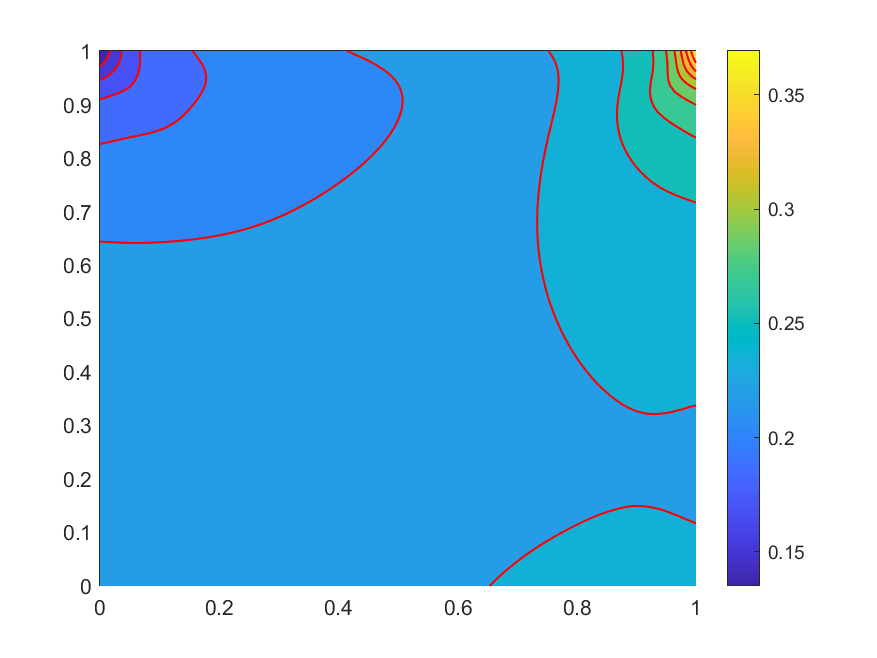}}\\
    \subfloat[Temperature $T$, $\theta_0=4$]
    {\includegraphics[width=0.33\textwidth, clip]{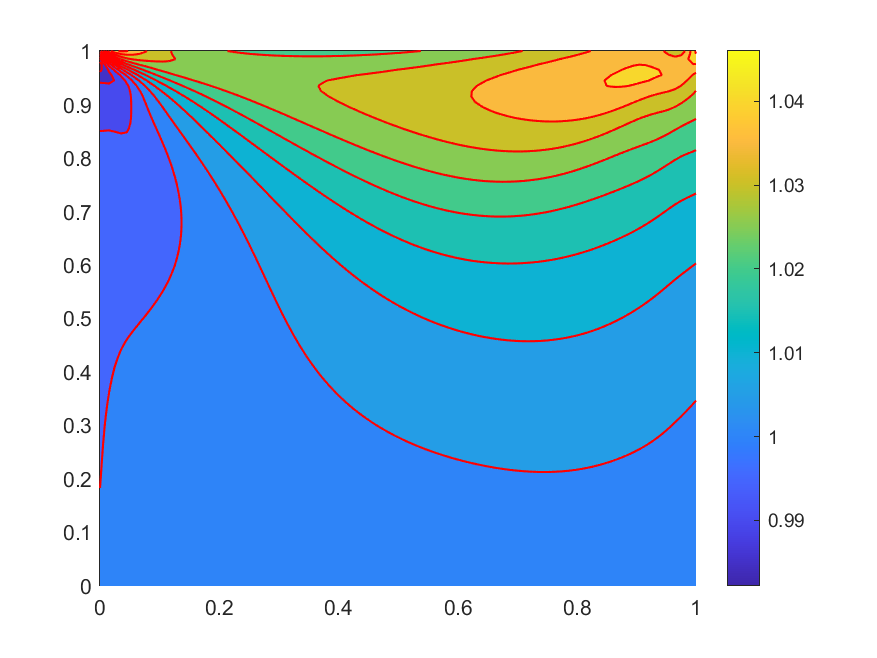}}\hfill
    \subfloat[Temperature $T$, $\theta_0=0.01$]
    {\includegraphics[width=0.33\textwidth, clip]{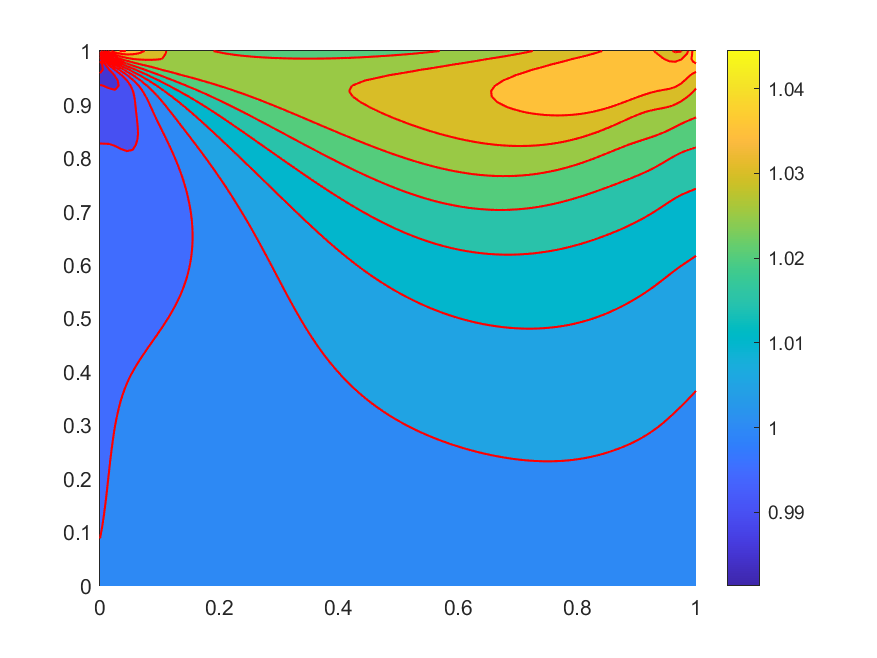}}\hfill
    \subfloat[Temperature $T$, $\theta_0=-4$]
    {\includegraphics[width=0.33\textwidth, clip]{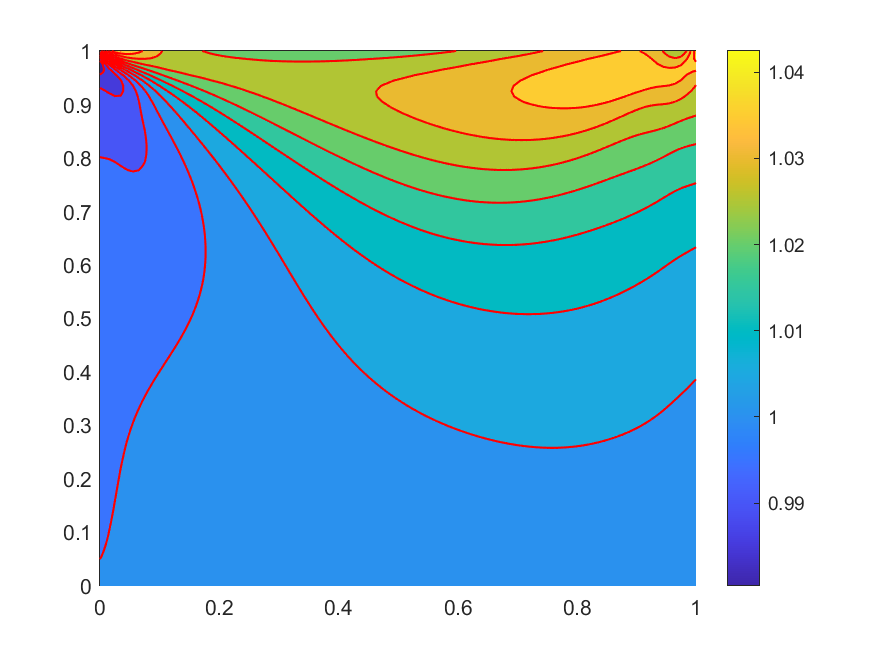}}\\
    \subfloat[shear stress $p_{12}$, $\theta_0=4$]
    {\includegraphics[width=0.33\textwidth, clip]{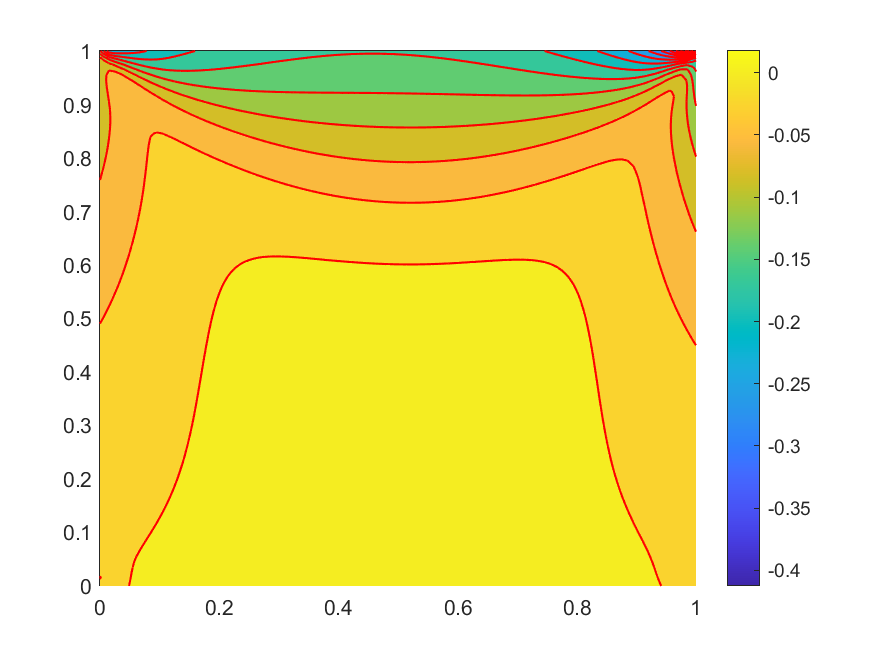}}\hfill
    \subfloat[shear stress $p_{12}$, $\theta_0=0.01$]
    {\includegraphics[width=0.33\textwidth, clip]{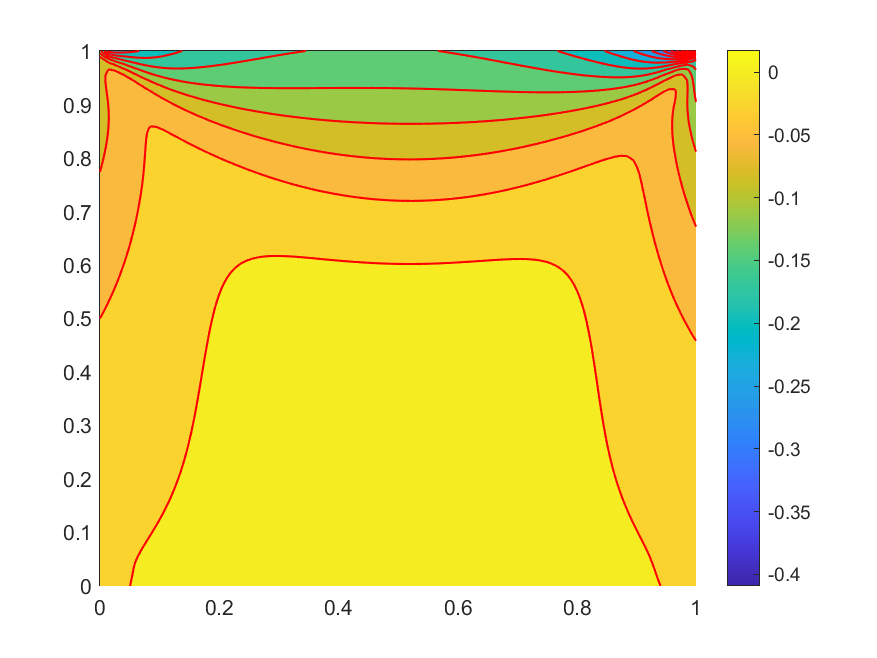}}\hfill
    \subfloat[shear stress $p_{12}$, $\theta_0=-4$]
    {\includegraphics[width=0.33\textwidth, clip]{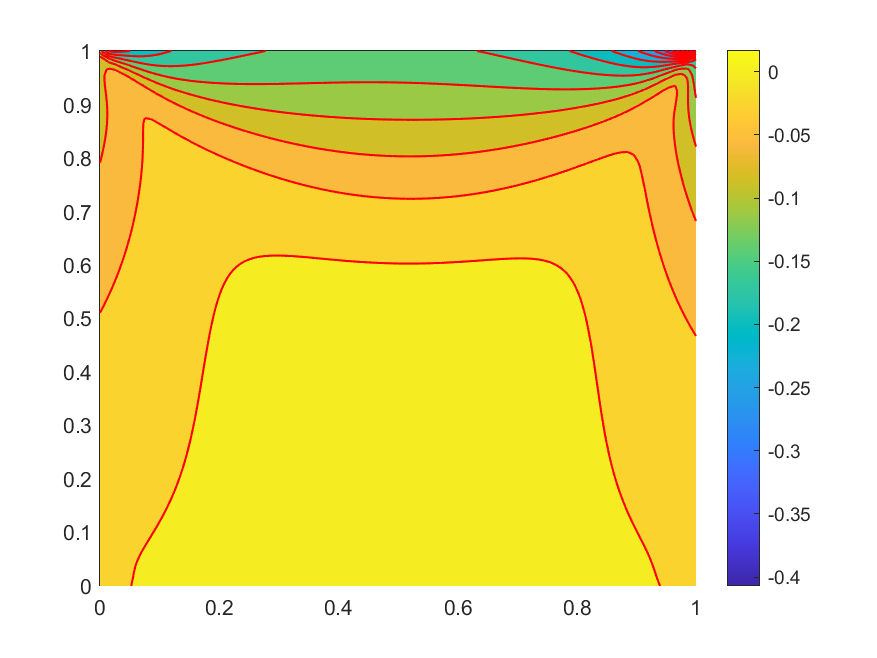}}\hfill
    \caption{(The lid-driven cavity flow in Sec. \ref{sec:cavity}) The numerical solutions of the fugacity $z|\theta_0|$, temperature $T$ and shear stress $p_{12}$ for $\epsilon = 0.1$. The left column presents the Fermi gas in the quantum regime for $\theta_0 = 4$, the middle column presents the near classical regime for $\theta_0 = 0.01$, and the right column presents the Bose gas in the quantum regime for $\theta_0 = -4$.}
    \label{fig:Cavity1}
\end{figure}

In this section, the lid-driven cavity flow in a spatially 2D and microscopically 3D setting is considered. The classical case of this problem has been widely studied, as seen in works like \cite{Liu2020, ZhichengHu2019}. In this scenario, the quantum gas is confined in a square cavity, where the top lid moves to the right at a constant speed, while all the other three walls remain stationary. All walls maintain the same temperature. Over an extended period, the gas reaches a steady state, which is the condition of particular interest. Due to the high dimensionality of this problem, capturing the steady state results in a considerable computational challenge.

For the classical problem, the Maxwell boundary condition \cite{Maxwell1878} is adopted. A similar boundary condition is utilized for the quantum gas. Specifically, assuming the velocity and temperature of the boundary wall are $\bu^w$ and $T^w$, respectively, the wall boundary condition at $\bx_0$ is given by 
\begin{equation}
    \label{eq:Boundary}
    f^w(t, \bx_0, \bv)=\left\{\begin{array}{ll}
         \mM^w_q(t,\bx_0,\bv)\triangleq \frac{1}{(z^w)^{-1}\exp\left(\frac{(\bv-\bu^w)^2}{2T^w}\right)+\theta_0}, & (\bv-\bu^w)\cdot \bn_0 \leqslant 0,  \\
       f(t, \bx_0, \bv),  &  (\bv- \bu^w) \cdot \bn_0 > 0,
    \end{array}
    \right.
\end{equation}
where $\bn_0$ represents the outer unit normal vector at $\bx_0$. Here, $z^w$ is determined such that the normal mass flux on the boundary is zero: 
\begin{equation}
    \label{eq:zeroflux}
    \int_{\bbR^3}\left[(\bv-\bu^w)\cdot \bn_0\right] f^w(t, \bx_0, \bv)\rd \bv=0.
\end{equation}
However, when $\theta_0 < 0$, there are cases where no $z^w\in [0,-1/\theta_0]$ satisfies \eqref{eq:zeroflux}. In such instances, the condition \eqref{eq:theta_BE} is not met. Under these circumstances, the boundary condition is adjusted as 
    \begin{equation}
        \label{eq:ad_BC}
        f^w(t, \bx_0, \bv)=C^w\mM^w_q(t,\bx_0,\bv),\qquad z^w = -\frac{1}{\theta_0},  \qquad (\bv - \bu^w) \cdot \bn_0 \geqslant 0,
    \end{equation}
where $C^w$ is a positive constant determined by $\eqref{eq:zeroflux}$. For detailed implementation of this boundary condition with the Hermite spectral method, readers may refer to \cite{ZhichengHu2019}.

In the simulation, the velocity of the top lid is set to $\bu^w = (0.5, 0, 0)$ with a uniform temperature $T^w = 1.0$ for all walls. A uniform grid mesh with $N = N_x = N_y = 100$ is employed for spatial discretization. The first-order scheme \eqref{eq:AP_time} is adopted with linear reconstruction in the spatial space. The CFL number is set as ${\rm CFL} = 0.2$, and the expansion center is chosen as $[\ou, \oT] = [\bz, 1]$. 

Firstly, the case with $\epsilon = 0.1$ is tested, employing an expansion order of $M = 10$. Numerical solutions of the fugacity $z|\theta_0|$, temperature $T$, and shear stress $p_{12}$ for $\theta_0 = 4, 0.01, -4$ are depicted in Fig. \ref{fig:Cavity1}, illustrating distinct behaviors for different $\theta_0$.

To examine the convergence of the numerical solutions, results along $x = 0.5$ and $y = 0.5$ are displayed in Fig. \ref{fig:Cavity_comp1} and \ref{fig:Cavity_comp2}, respectively, for grid sizes $N = 25, 50, 100$, and $200$. Both figures demonstrate that the numerical solutions for $z|\theta_0|$, $T$, and $p_{12}$ converge with increasing grid numbers for all $\theta_0$. This affirms the reliability of the numerical results.

For further validation, the case with $\epsilon = 1.0$ is investigated. The expansion number is increased to $M = 15$, while other settings are maintained from the test of $\epsilon = 0.1$. The numerical solutions for $\epsilon = 1.0$ are presented in Fig. \ref{fig:Cavity2}, exhibiting trends similar to those for $\epsilon = 0.1$.
%the behavior of which is similar to that of $\epsilon = 0.1$. 

All the experiments are conducted on the CPU model Intel Xeon E5-2697A V4 @ 2.6GHz with 28 threads. The simulations take approximately 7 hours with $M=10$ and around 30 hours with $M=15$, both for a spatial mesh of $N=100$. These results indicate that achieving a steady state simulation for this two-dimensional spatial and three-dimensional microscopic velocity problem is feasible with reasonable computational cost, which highlights the efficiency of the Hermite spectral method.

\begin{figure}[!ht]
  % \colorbox{black}
    \centering
    \subfloat[fugacity $z|\theta_0|$, $\theta_0=4$]
    {\includegraphics[width=0.33\textwidth, height=0.26\textwidth, clip]{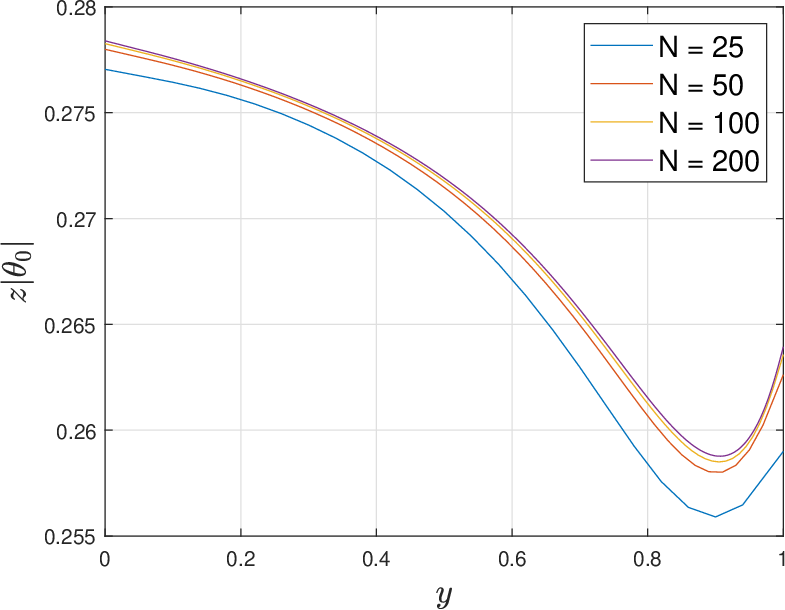}}\hfill
    \subfloat[fugacity $z|\theta_0|$, $\theta_0=0.01$]
    {\includegraphics[width=0.33\textwidth, height=0.27\textwidth, clip]{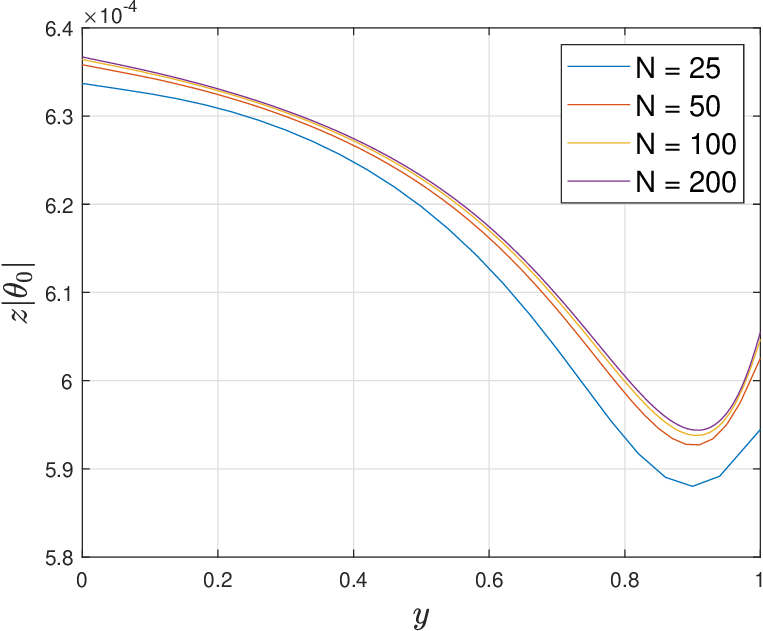}}\hfill
    \subfloat[fugacity $z|\theta_0|$, $\theta_0=-4$]
    {\includegraphics[width=0.33\textwidth, height=0.26\textwidth, clip]{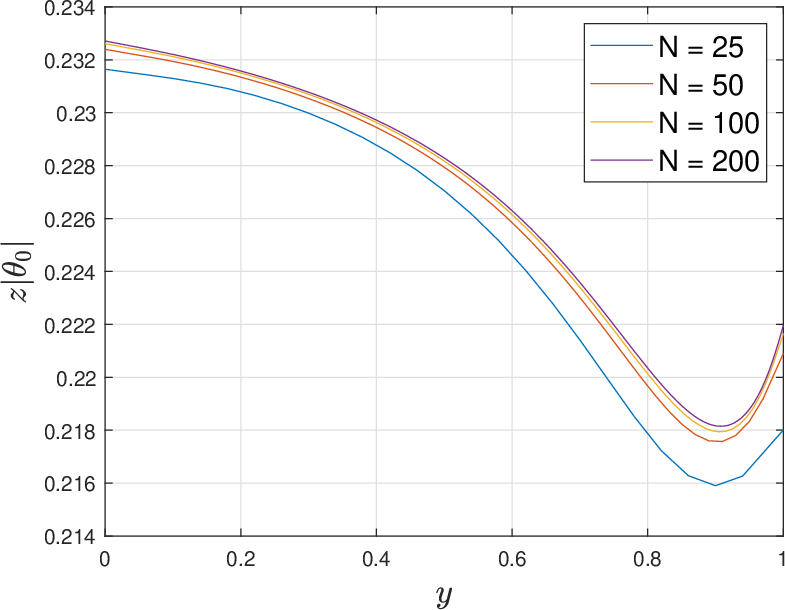}}\\
    \subfloat[Temperature $T$, $\theta_0=4$]
    {\includegraphics[width=0.33\textwidth, clip]{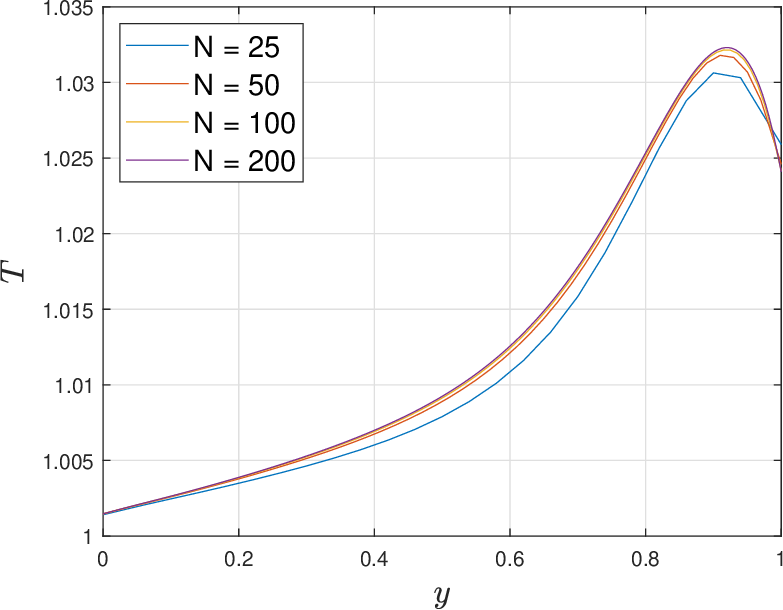}}\hfill
    \subfloat[Temperature $T$, $\theta_0=0.01$]
    {\includegraphics[width=0.33\textwidth, clip]{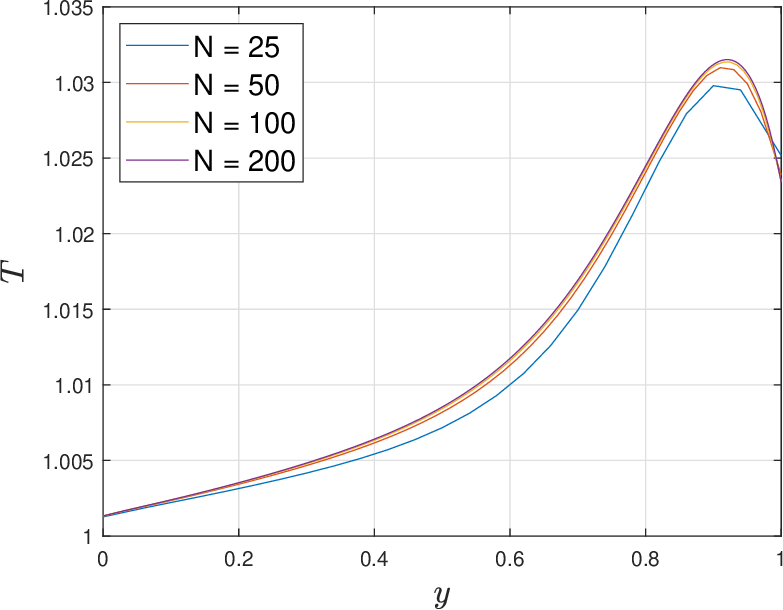}}\hfill
    \subfloat[Temperature $T$, $\theta_0=-4$]
    {\includegraphics[width=0.33\textwidth, clip]{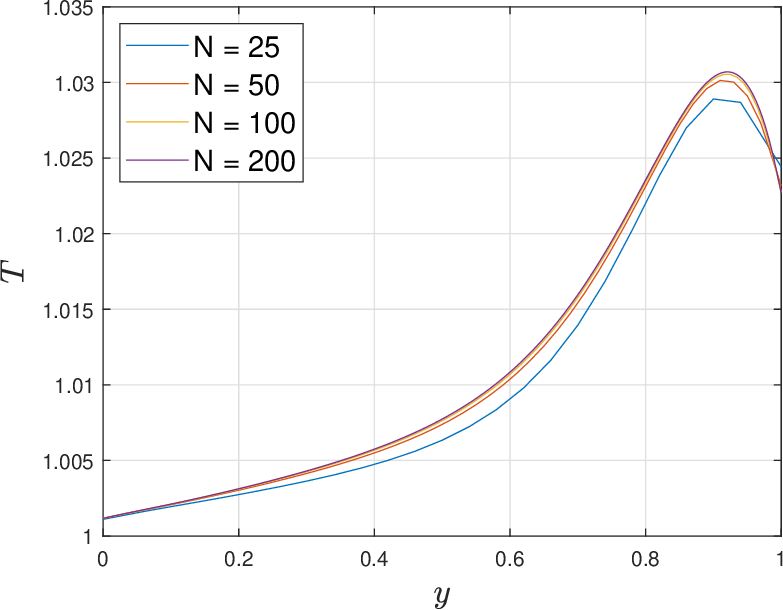}}\\
    \subfloat[shear stress $p_{12}$, $\theta_0=4$]
    {\includegraphics[width=0.33\textwidth, clip]{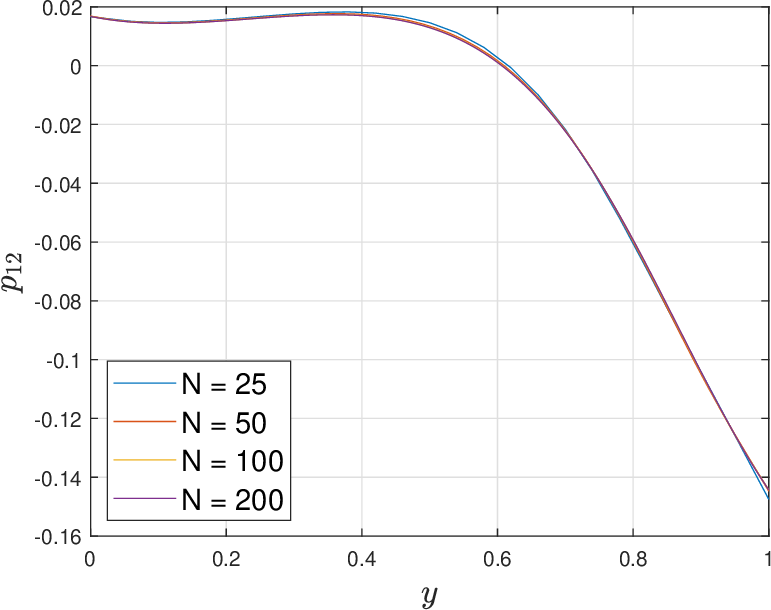}}\hfill
    \subfloat[shear stress $p_{12}$, $\theta_0=0.01$]
    {\includegraphics[width=0.33\textwidth, clip]{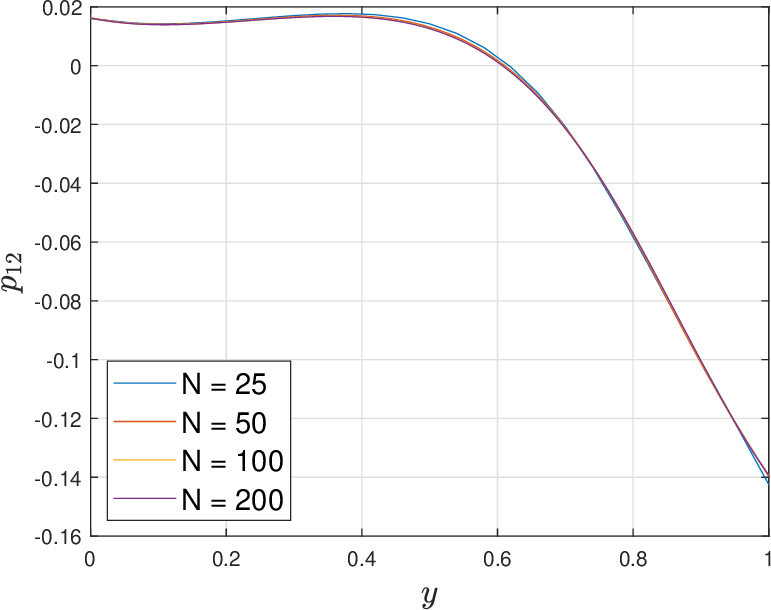}}\hfill
    \subfloat[shear stress $p_{12}$, $\theta_0=-4$]
    {\includegraphics[width=0.33\textwidth, clip]{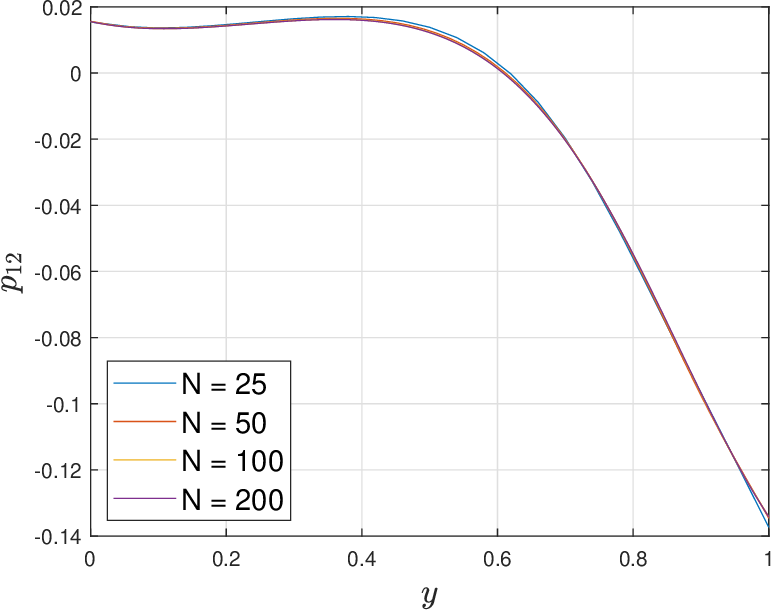}}\hfill
    \caption{(The lid-driven cavity flow in Sec. \ref{sec:cavity}) The numerical solutions of the fugacity $z|\theta_0|$, temperature $T$ and shear stress $p_{12}$ for $\epsilon = 0.1$ with different grid sizes along $x = 0.5$. The left column presents for the Fermi gas in the quantum regime for $\theta_0 = 4$, the middle column presents the near classical regime for $\theta_0 = 0.01$, and the right column presents the Bose gas in the quantum regime for $\theta_0 = -4$.}
    \label{fig:Cavity_comp1}
\end{figure}

\begin{figure}[!ht]
  % \colorbox{black}
    \centering
    \subfloat[fugacity $z|\theta_0|$, $\theta_0=4$]
    {\includegraphics[width=0.33\textwidth, clip]{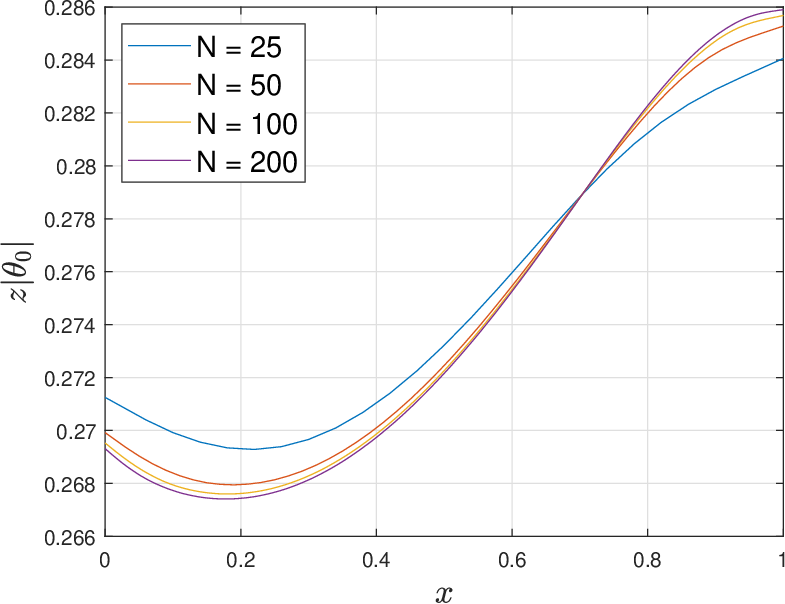}}\hfill
    \subfloat[fugacity $z|\theta_0|$, $\theta_0=0.01$]
    {\includegraphics[width=0.33\textwidth, clip]{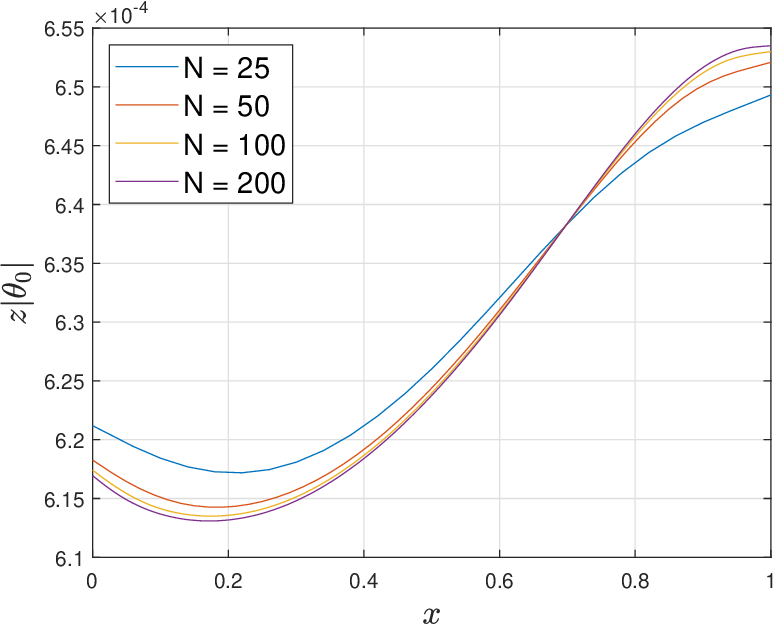}}\hfill
    \subfloat[fugacity $z|\theta_0|$, $\theta_0=-4$]
    {\includegraphics[width=0.33\textwidth, clip]{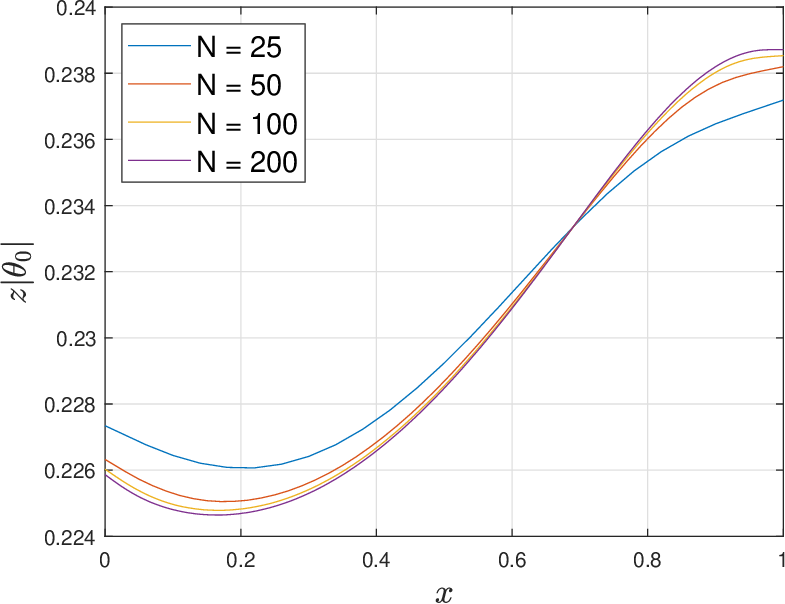}}\\
    \subfloat[Temperature $T$, $\theta_0=4$]
    {\includegraphics[width=0.33\textwidth, clip]{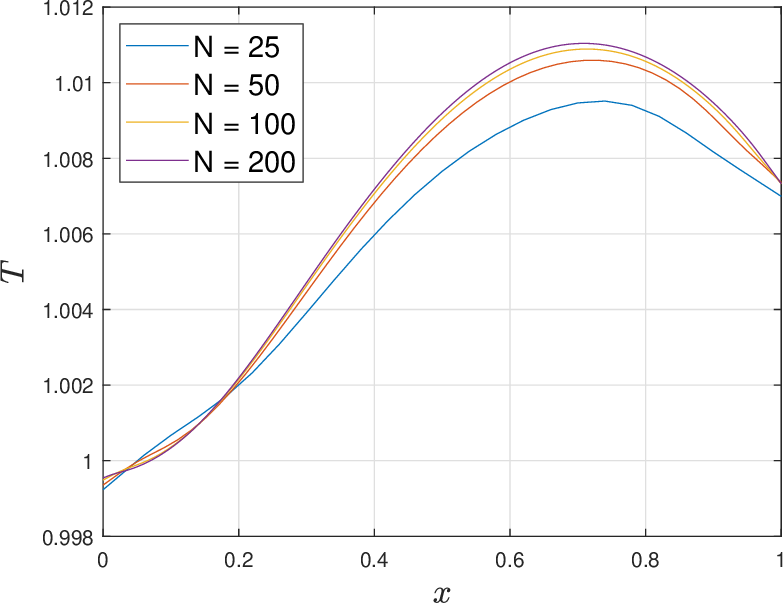}}\hfill
    \subfloat[Temperature $T$, $\theta_0=0.01$]
    {\includegraphics[width=0.33\textwidth, clip]{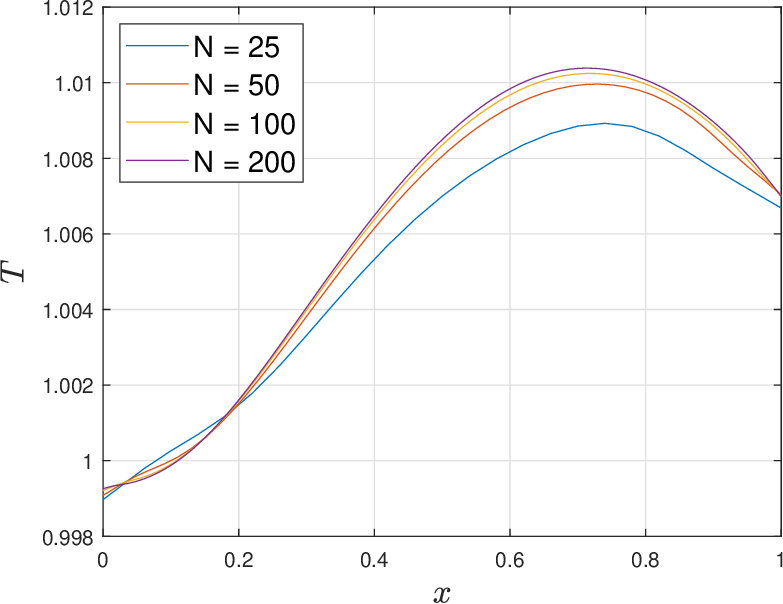}}\hfill
    \subfloat[Temperature $T$, $\theta_0=-4$]
    {\includegraphics[width=0.33\textwidth, clip]{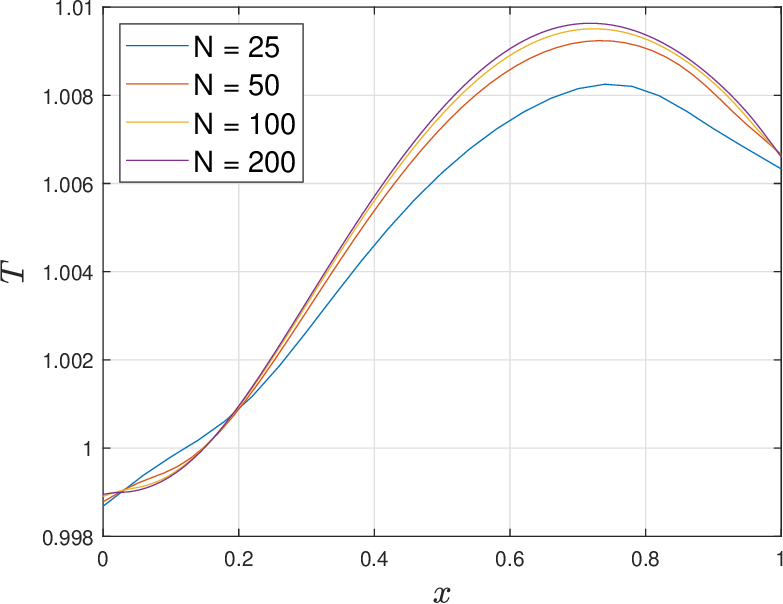}}\\
    \subfloat[shear stress $p_{12}$, $\theta_0=4$]
    {\includegraphics[width=0.33\textwidth, clip]{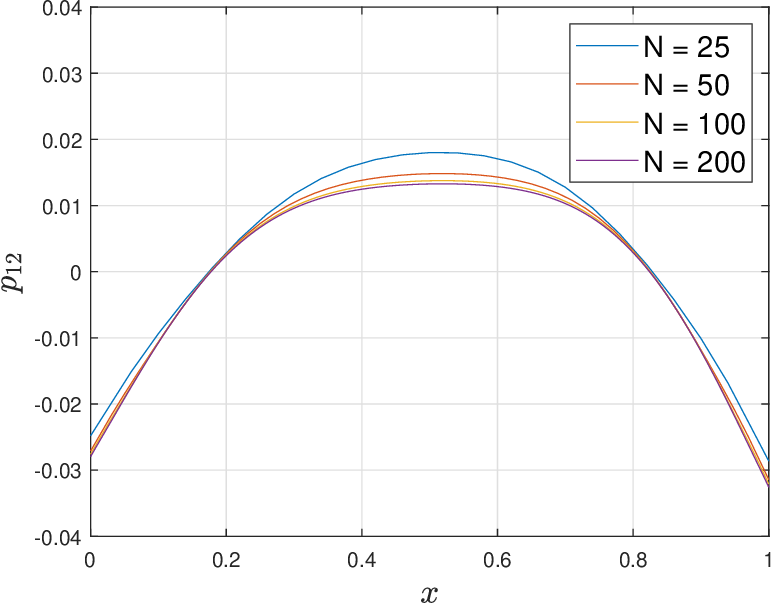}}\hfill
    \subfloat[shear stress $p_{12}$, $\theta_0=0.01$]
    {\includegraphics[width=0.33\textwidth, clip]{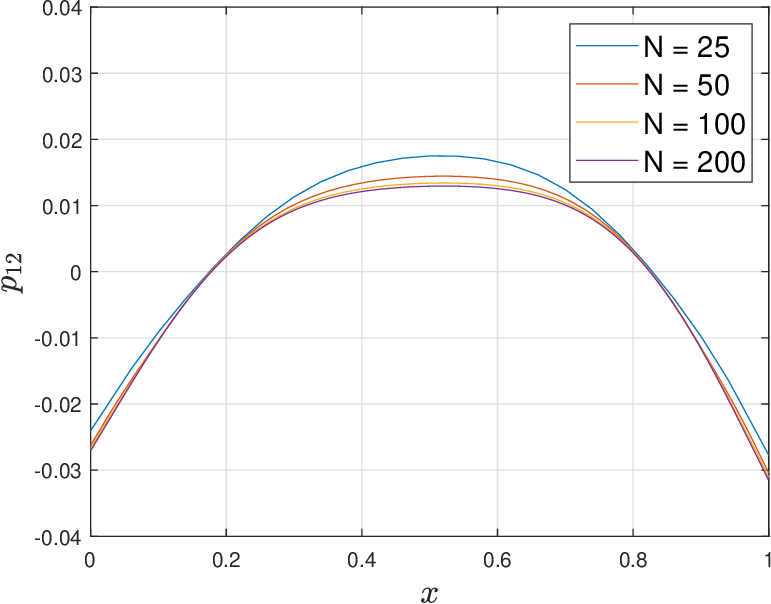}}\hfill
    \subfloat[shear stress $p_{12}$, $\theta_0=-4$]
    {\includegraphics[width=0.33\textwidth, clip]{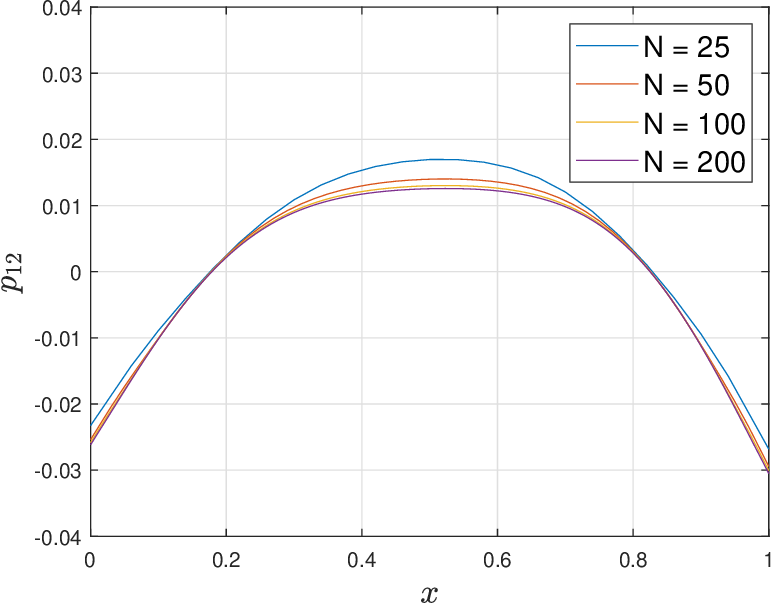}}\hfill
    \caption{(The lid-driven cavity flow in Sec. \ref{sec:cavity}) The numerical solutions of the fugacity $z|\theta_0|$, temperature $T$ and shear stress $p_{12}$ for $\epsilon = 0.1$ with different grid sizes along $y = 0.5$. The left column presents for the Fermi gas in the quantum regime for $\theta_0 = 4$, the middle column presents the near classical regime for $\theta_0 = 0.01$, and the right column presents the Bose gas in the quantum regime for $\theta_0 = -4$. }
    \label{fig:Cavity_comp2}
\end{figure}

\begin{figure}[!ht]
  % \colorbox{black}
    \centering
    \subfloat[fugacity $z|\theta_0|$, $\theta_0=4$]
    {\includegraphics[width=0.33\textwidth, clip]{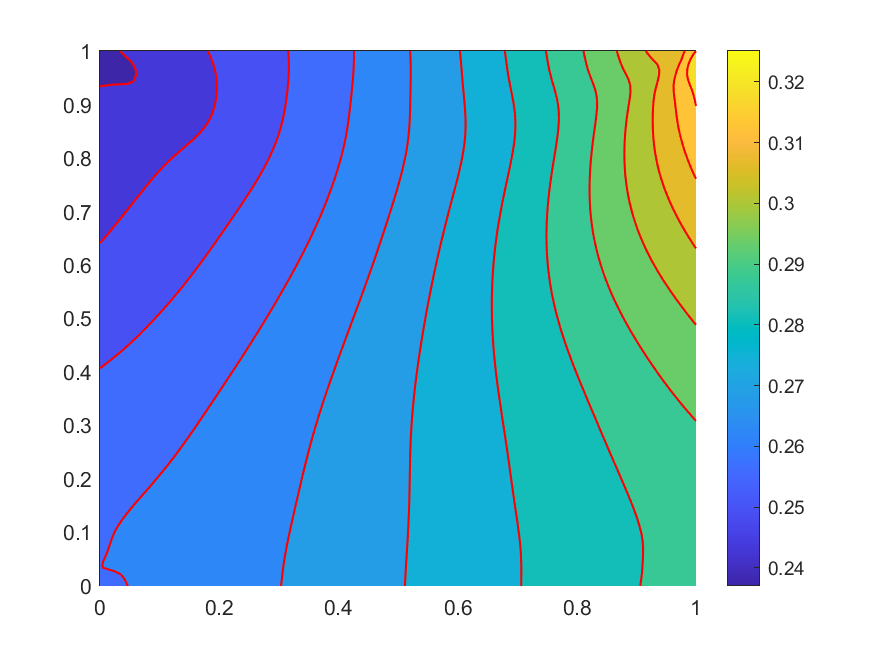}}\hfill
    \subfloat[fugacity $z|\theta_0|$, $\theta_0=0.01$]
    {\includegraphics[width=0.33\textwidth, clip]{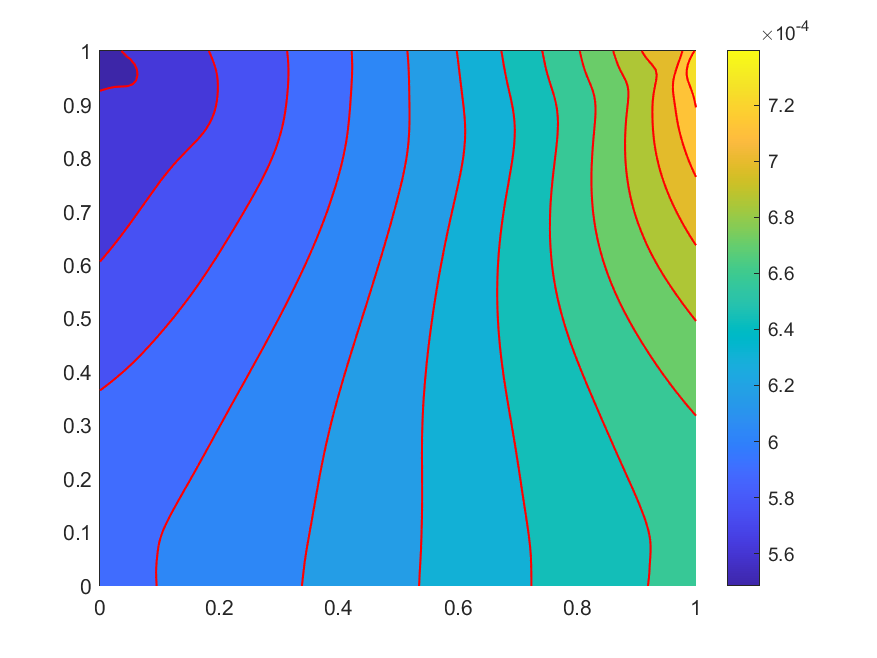}}\hfill
    \subfloat[fugacity $z|\theta_0|$, $\theta_0=-4$]
    {\includegraphics[width=0.33\textwidth, clip]{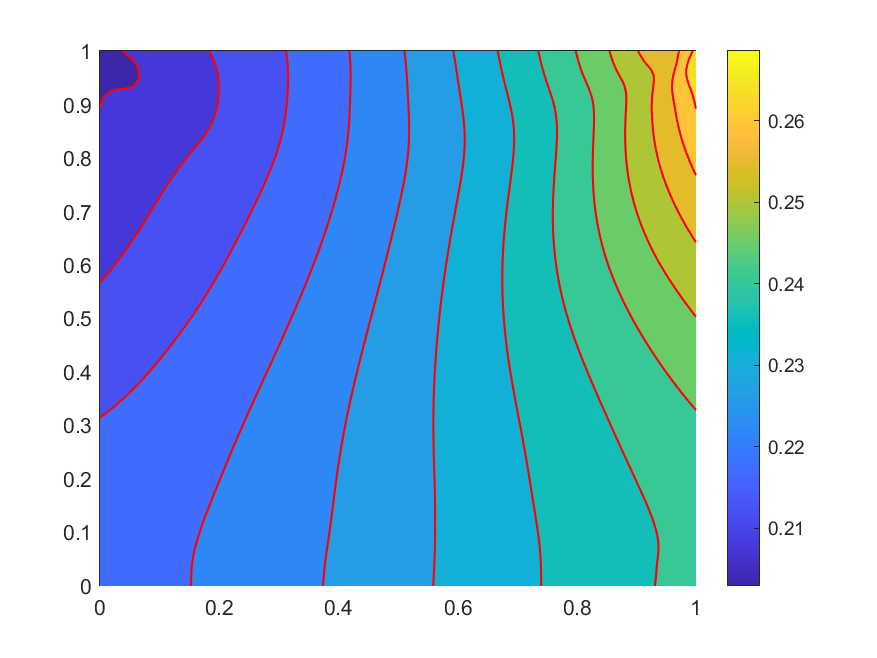}}\\
    \subfloat[Temperature $T$, $\theta_0=4$]
    {\includegraphics[width=0.33\textwidth, clip]{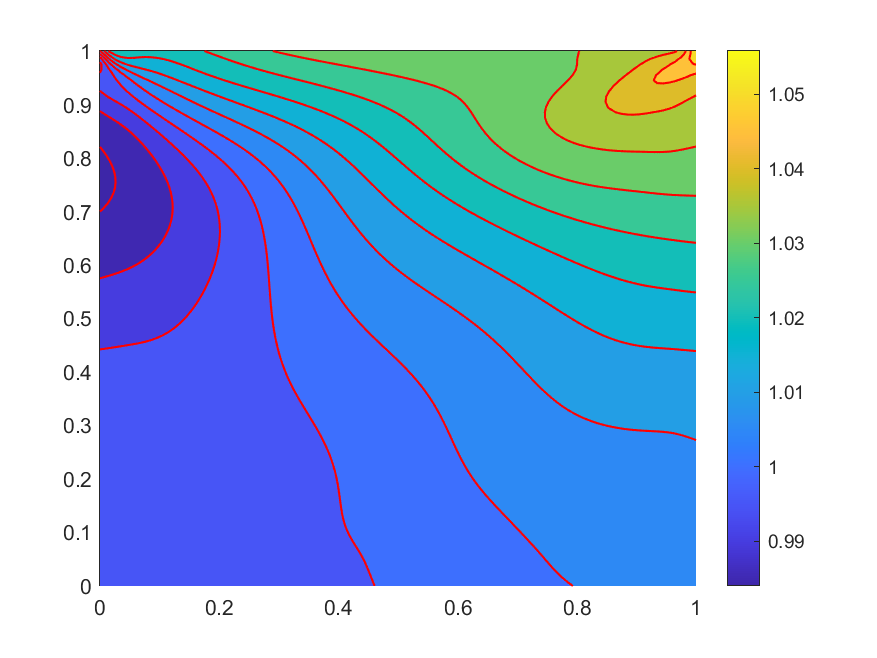}}\hfill
    \subfloat[Temperature $T$, $\theta_0=0.01$]
    {\includegraphics[width=0.33\textwidth, clip]{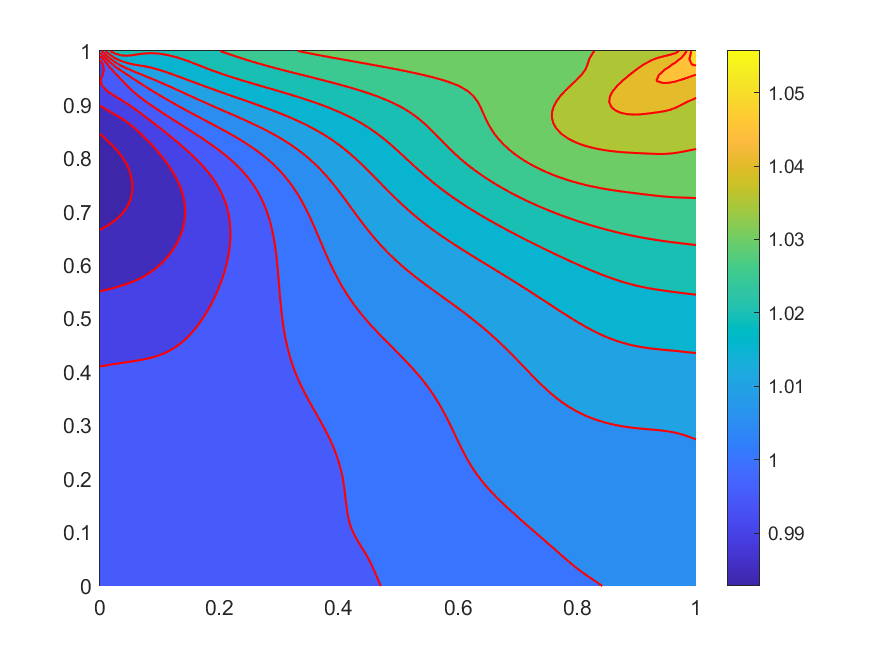}}\hfill
    \subfloat[Temperature $T$, $\theta_0=-4$]
    {\includegraphics[width=0.33\textwidth, clip]{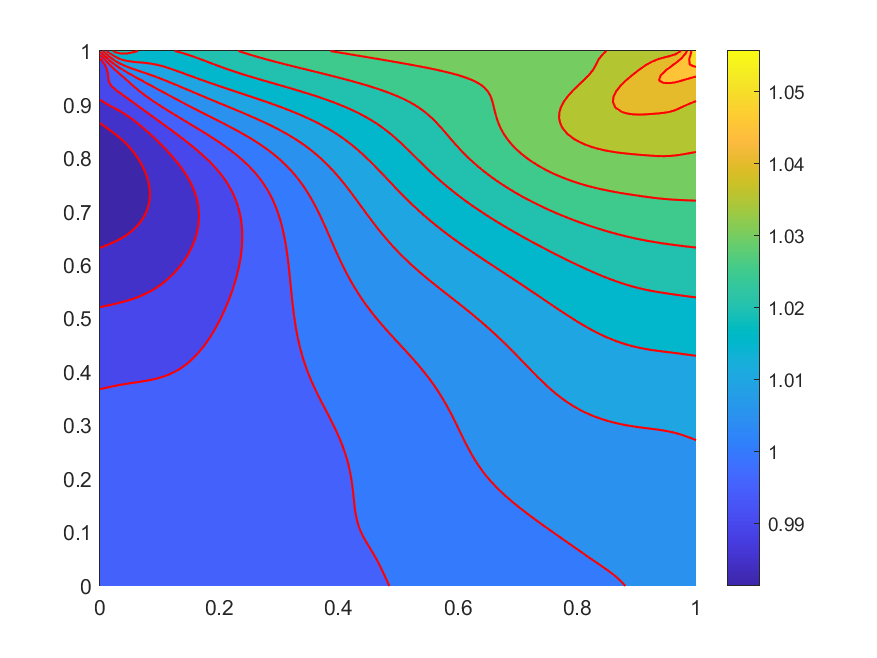}}\\
    \subfloat[shear stress $p_{12}$, $\theta_0=4$]
    {\includegraphics[width=0.33\textwidth, clip]{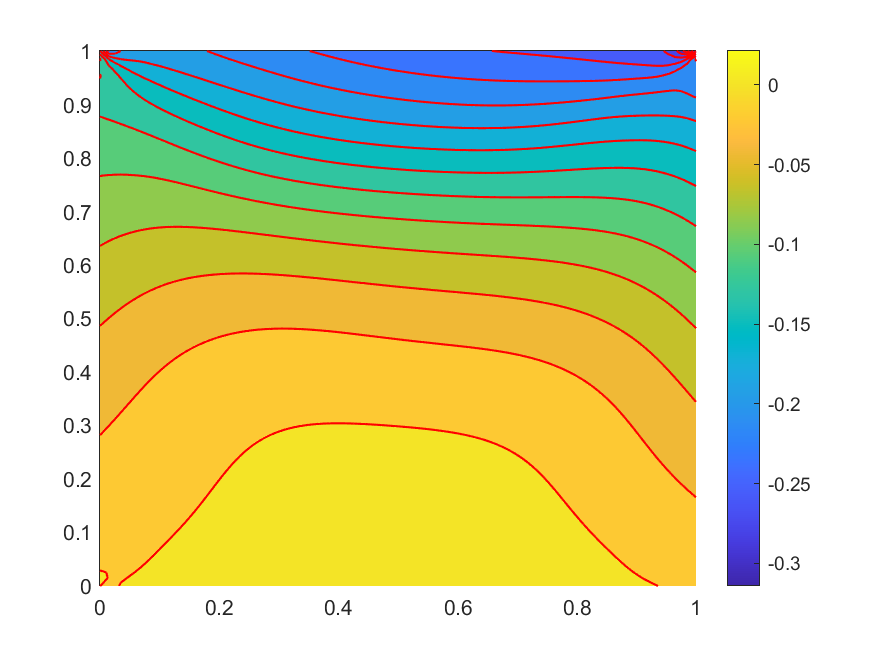}}\hfill
    \subfloat[shear stress $p_{12}$, $\theta_0=0.01$]
    {\includegraphics[width=0.33\textwidth, clip]{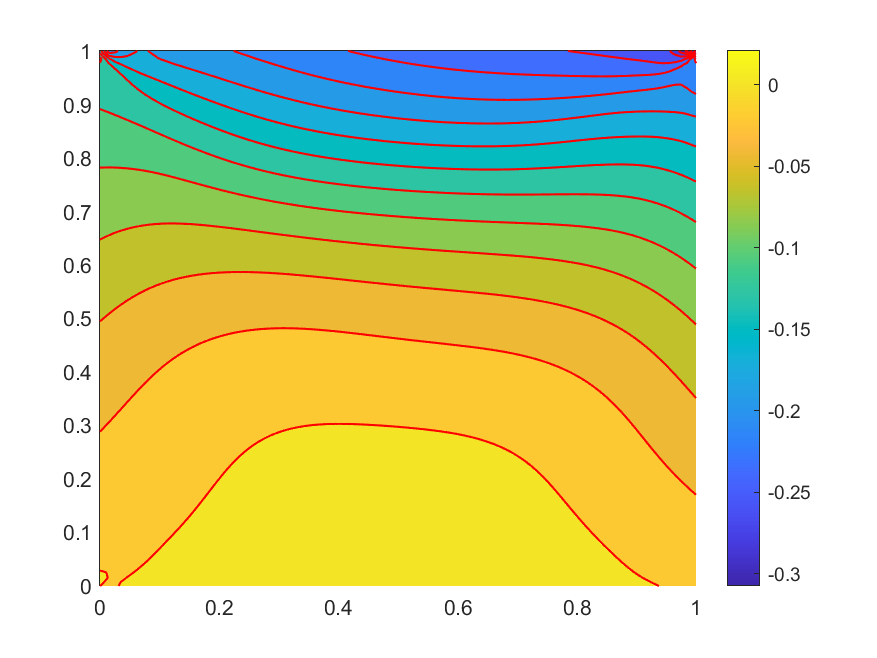}}\hfill
    \subfloat[shear stress $p_{12}$, $\theta_0=-4$]
    {\includegraphics[width=0.33\textwidth, clip]{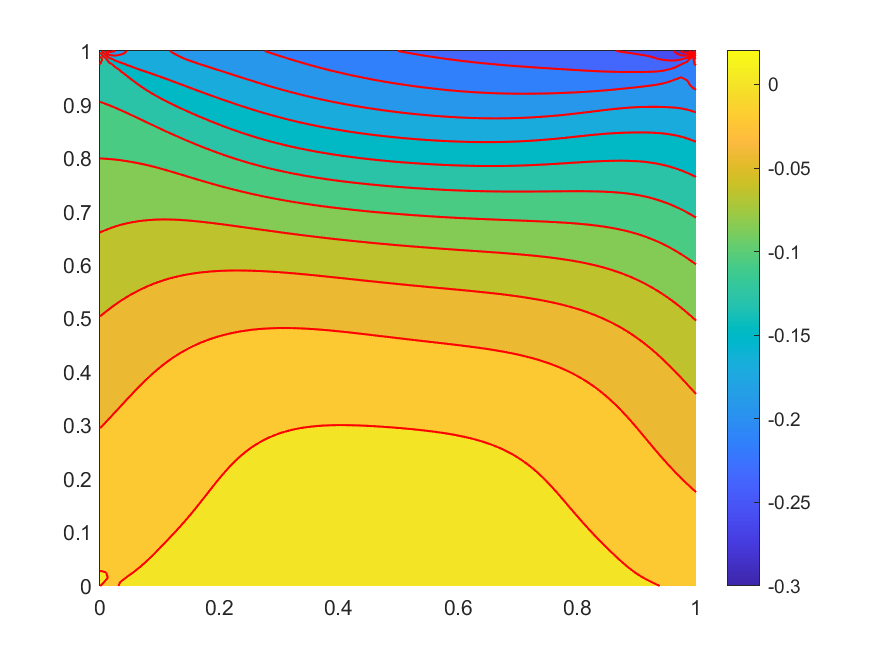}}\hfill
    \caption{(The lid-driven cavity flow in Sec. \ref{sec:cavity}) The numerical solutions of the fugacity $z|\theta_0|$, temperature $T$ and shear stress $p_{12}$ for $\epsilon = 1$. The left column presents for the Fermi gas in the quantum regime for $\theta_0 = 4$, the middle column presents the near classical regime for $\theta_0 = 0.01$, and the right column presents the Bose gas in the quantum regime for $\theta_0 = -4$. }
    \label{fig:Cavity2}
\end{figure}

\section{Conclusion}
\label{sec:conclusion}
We have proposed an asymptotic preserving IMEX Hermite spectral method for the quantum BGK equation. To enhance the overall efficiency of the numerical scheme, a fast algorithm for computing the polylogarithm is introduced to derive the expansion coefficients of the quantum Maxwellian. In the numerical experiments, the AP property has been successfully verified. Subsequently, the numerical scheme has been validated through simulations of the spatially 1D Sod and mixing regime problems. Finally, this Hermite spectral method is applied to a spatially 2D lid-driven cavity flow problem, which further demonstrates its outstanding efficiency.

\section*{Acknowledgments}
The work of Ruo Li is partially supported by the National Natural Science Foundation of China (Grant No. 12288101).
This work of Yanli Wang is partially supported by the National Natural Science Foundation of China (Grant No. 12171026, U2230402, and 12031013), and Foundation of President of China Academy of Engineering Physics (YZJJZQ2022017). 
This research is supported by High-performance Computing Platform of Peking University.

% \section{Exact form of Hermite polynomials}
% The one-dimensional standard polynomials $He_n(x)=(-1)^n\exp\left(\frac{x^2}2\right) \frac{\rd^n}{\rd x^n}\exp\left(-\frac{x^2}2\right)$ can be exactly written as 
% \begin{equation}
%     \label{eq:coef_Her}
%     \begin{split}
%         &He_{2n}(x)=\sum_{k=0}^n\frac{(2n-1)!!}{(2n-2k-1)!!}
%         (-1)^kC_n^kx^{2n-2k}, \\
%         &He_{2n-1}(x)=\sum_{k=0}^{n-1}\frac{(2n-1)!!}{(2n-2k-1)!!}
%         (-1)^kC_{n-1}^kx^{2n-2k-1}.
%     \end{split}
% \end{equation}
% With the relationship 
% \begin{equation}
%     \label{eq:rela_Hermite}
%     H^{\bz, 1}_{\alpha}(\bv)=He_{\alpha_1}(v_1)He_{\alpha_2}(v_2)He_{\alpha_3}(v_3)
% \end{equation}
% and the transitivity
% \begin{equation}
%     \label{eq:tran}
%     H\aut(\bv)=H_{\alpha}^{\bz,1}\left(\sqrt{\frac{1}{{\;\bT\;}}}(\bv-\ou)\right),
% \end{equation}
% we can in fact expand any Hermite polynomial with the form
% \begin{equation}
%     \label{eq:form_Hermite}
%     H\aut(\bv)=\sum_{\beta:\; \beta_i\leqslant \alpha_i, i=1,2,3}\mC^{[\bu,\oT]}(\alpha,\beta) \bv^{\beta},
% \end{equation}
% where $\mC^{[\bu,\oT]}(\alpha,\beta)$ are some constants that can be calculated directly with \eqref{eq:coef_Her}, \eqref{eq:rela_Hermite} and \eqref{eq:tran}.

\section{Appendix}
\label{sec:app}
\subsection{Comparison to \textbf{polylog} in MATLAB} 
\label{app:compare}
To validate the algorithm for calculating the polylogarithm, as proposed in Sec. \ref{sec:integral}, we compare the results with the MATLAB function \textbf{polylog}. The error $e=\big|\Li_{s, \rm num}(y)-\Li_{s, \rm ref}(y)\big|$ is recorded for $s=1.5, 2.5, 3.5$ and $y\in[-10,1]$ with an interval of $0.01$. Here, $\Li_{\rm num} $ corresponds to the result obtained using the method proposed in Sec. \ref{sec:integral}, and $\Li_{\rm ref}$ is the reference result obtained using the MATLAB function \textbf{polylog}. 

\begin{figure}[!ht]
  % \colorbox{black}
    \centering
      \subfloat[Error of $\Li_{1.5}(y)$.]
    {\includegraphics[width=0.3\textwidth, height=0.24\textwidth,
      clip]{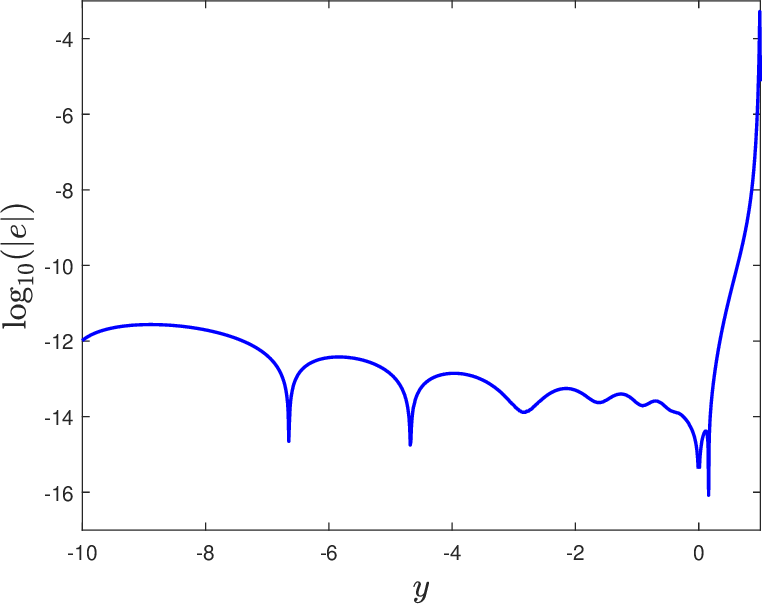}}\hfill
      \subfloat[Error of $\Li_{2.5}(y)$.]
    {\includegraphics[width=0.3\textwidth, height=0.24\textwidth,
      clip]{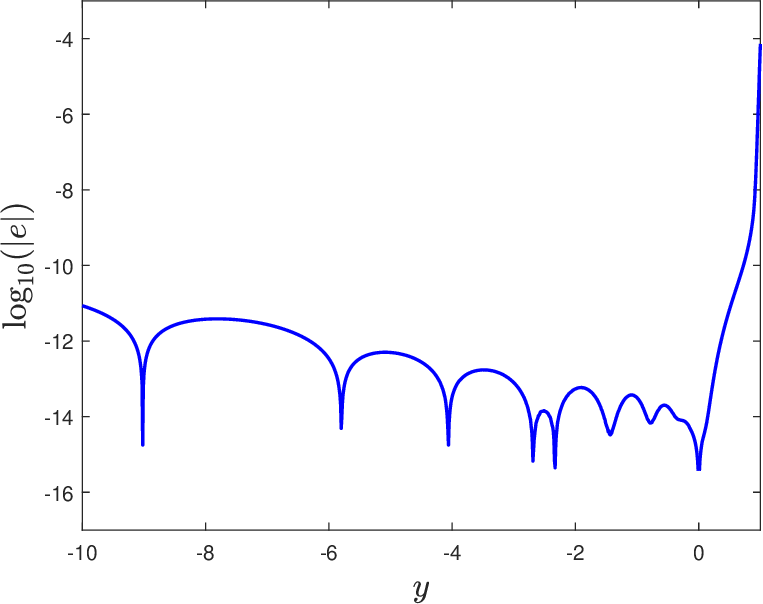}}\hfill
      \subfloat[Error of $\Li_{3.5}(y)$.]
    {\includegraphics[width=0.3\textwidth, height=0.24\textwidth,
      clip]{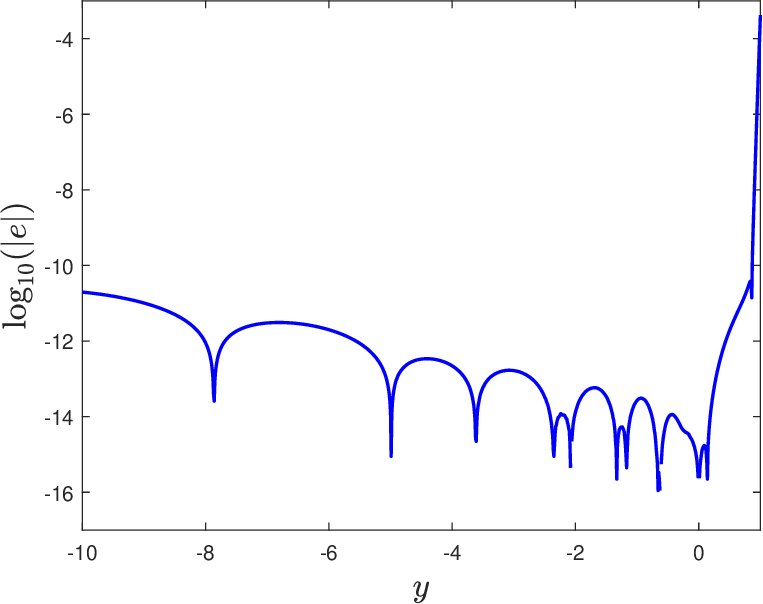}}\hfill
      \caption{(Comparison to \textbf{polylog} in MATLAB in App. \ref{app:compare}) The error $e = \big|\Li_{\rm num}(y) - \Li_{\rm ref}(y)\big|, \; s=1.5,2.5,3.5$. The reference result is obtained using the MATLAB function \textbf{polylog}.}    
      \label{fig:polylog}
\end{figure}

Fig. \ref{fig:polylog} shows that the error $e$ is quite small when $y$ is far from $1$, which is close to the machine's precision. The error increases as $y$ approaches $1$ due to the singularity of the integral \eqref{eq:algo-Li}. To illustrate the efficiency of this integral algorithm, we record the running time to obtain $\Li_{s, \rm num}$ and $\Li_{s, \rm ref}$ in Tab. \ref{table:polylog_time}. The results reveal that the integral algorithm is significantly faster compared to \textbf{polylog}, which demonstrates its high efficiency.

\begin{table}[!ht]
\centering
\def\arraystretch{1.5}
{\footnotesize
\begin{tabular}{llll}
\hline
    & $s=1.5$ & $s=2.5$ & $s=3.5$        \\
\hline
Integral method (s) & $1.97\times10^{-3}$s & $3.93\times10^{-3}$s & $4.16\times10^{-3}$s\\
\textbf{polylog} (s) & $9.67$s & $9.82$s & $11.82$s\\
\hline
\end{tabular}
}
\caption{(Comparison to \textbf{polylog} in MATLAB in App. \ref{app:compare}) Total running time (on MATLAB R2020b) to obtain $\Li_{s, \rm num}$ and $\Li_{s, \rm ref}$.}
\label{table:polylog_time}
\end{table}

% \begin{remark}
%     As observed in Fig. \ref{fig:polylog}, the results on $[-10, 0.9]$ are more accurate than on $[-10, 1]$. Thus, one may adopt this method on $y\leqslant0.9$ and call the function \textbf{polylog} on $[0.9,1]$, which will achieve higher accuracy and also save many computational assumptions than directly using \textbf{polylog}.

%     % In our numerical experiments of the quantum BGK model, all results are obtained with Algorithm \ref{algo:polylog}.
% \end{remark}

% As can be observed in Tab. \ref{table:polylog_time}, our method is thousands of times faster than the function \textbf{polylog} in MATLAB! Under the c++ environment, our algorithm will get even faster. 

\subsection{Newton iteration method}
\label{app:Newton}
In this section, a simple experiment will be conducted to verify the efficiency of the Newton iteration method introduced in Sec. \ref{sec:solve} for obtaining $z$ and $T$. We consider a spatial homogeneous problem with a source term, and the governing equation \eqref{eq:Boltz} is reduced to
\begin{equation}
    \label{eq:exp_Newton}
    \pd{f}{t}=(\mM_q-f)+ \mathcal{S}(t),
\end{equation}
where $\mathcal{S}(t)$ is the source term defined as
\begin{equation}
    \label{eq:source}
        \mathcal{S} = \frac{\rho_r(t)}{ (2 \pi T_r(t))^{3/2}}\exp\left(-\frac{|\bv|^2}{2T_r(t)}\right), 
\end{equation}
with $\rho_r(t)$ and $T(t)$ being random variables uniformly and independently distributed in the interval $[0.2, 1.8]$ for any $t$. The initial condition is given by the summation of two equilibrium states as
\begin{equation}
    \label{eq:ini_exp_Newton}
    f(0,\bv)=\frac12\left(\frac{1}{z^{-1}\exp\left(\frac{(\bv-\bu)^2}{2T}\right)+\theta_0}+\frac{1}{z^{-1}\exp\left(\frac{(\bv+\bu)^2}{2T}\right)+\theta_0}\right),
\end{equation}
where $T=1$, $\bu=(1,0,0)$, and $z$ is chosen such that $\rho(0)=1$. The time step size is set as $\Delta t=0.001$, which is similar in scale to the simulations in Sec. \ref{sec:experiment}. We set $\theta_0 = \pm 0.01$ and $\pm 9$, and the iteration counts for different $\theta_0$ at each time step are shown in Fig. \ref{fig:Newton} for the final time $t = 0.1$.  

\begin{figure}[!ht]
  % \colorbox{black}
    \centering
      \subfloat[$\theta_0=\pm0.01$]
    {\includegraphics[width=0.45\textwidth, height=0.36\textwidth,
      clip]{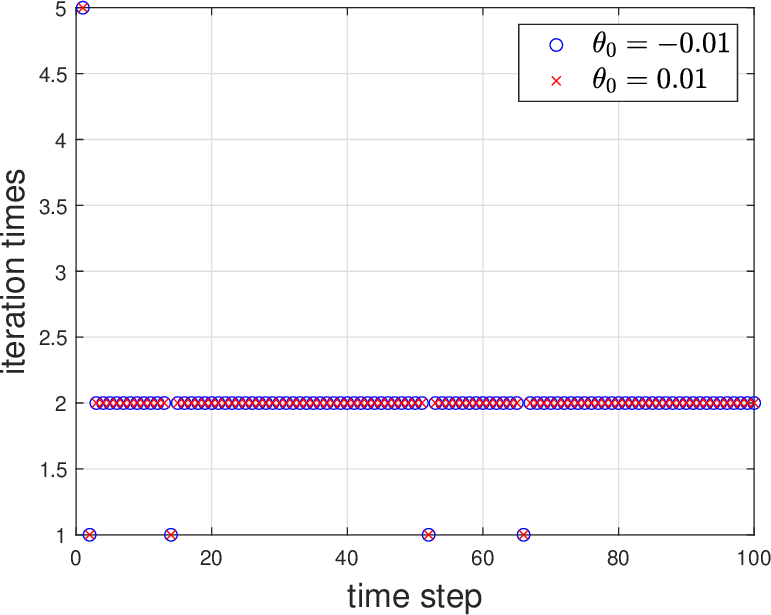}}\hfill
      \subfloat[$\theta_0=\pm9$]
    {\includegraphics[width=0.45\textwidth, height=0.36\textwidth,
      clip]{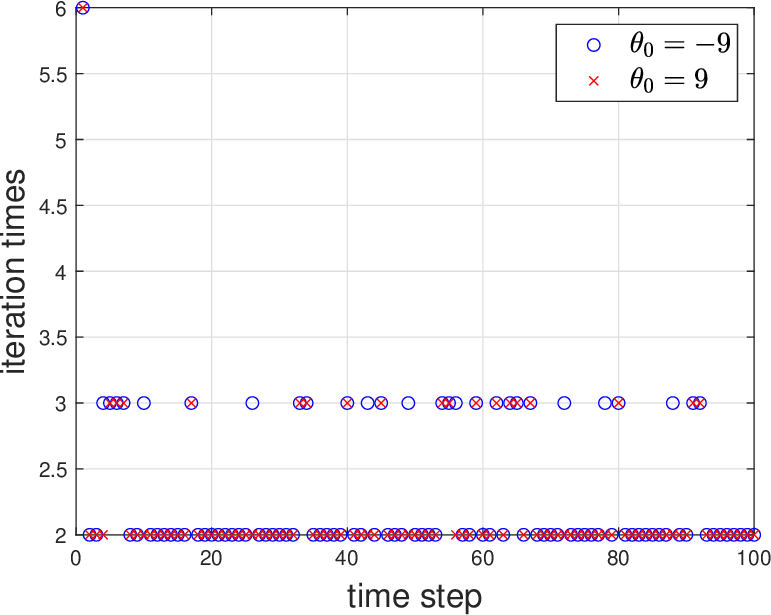}}\hfill
    \caption{(Efficiency of the Newton iteration method in App. \ref{app:Newton}) The Newton iteration times in each time step when solving \eqref{eq:exp_Newton}.}% (a) $\theta_0=\pm0.01$; (b) $\theta_0=\pm 9$.}
    \label{fig:Newton}
\end{figure}

It is evident that for all $\theta_0$, the number of iterations is quite small. This validates the high efficiency of this Newton method in solving the nonlinear system \eqref{eq:newsystem}, regardless of whether the problem is in the near classical regime or the regime with a strong quantum effect.

% We calculate the results up to $t=0.1$, which implies the test consists of 100 time steps. The iteration times of the Newton method for $\theta_0=\pm 9$, are provided in Tab. \ref{table:iteration_time}.

% \begin{table}[!hptb]
% \centering
% \def\arraystretch{1.5}
% {\footnotesize
% \begin{tabular}{lll}
% \hline
% & $\theta_0=9$      & $\theta_0=-9$          \\
% \hline
% Total iteration times of 100 steps & 222 & 231 \\
% Average iteration times each step & 2.22 & 2.31 \\
% \hline
% \end{tabular}
% }
% \caption{The Newton iteration times of the simple experiment in Sec. \ref{sec:solve}.}
% \label{table:iteration_time}
% \end{table}

% The criterion \eqref{eq:stop_criterion} ensures that the solution is accurate enough. From Tab. \ref{table:iteration_time}, we can see that the Newton method can obtain the solution within only a few iteration times at each step. This validates the high efficiency of this method solving the nonlinear system \eqref{eq:newsystem}, which is crucial for our whole numerical scheme.  

\addcontentsline{toc}{section}{References}
\bibliographystyle{plain}
\bibliography{article}

\end{document}